\newtheorem{theorem}{Theorem}[section]
\newtheorem{definition}[theorem]{Definition} %% Definitions are numbered with theorems together.
\newtheorem{lemma}[theorem]{Lemma}
\newtheorem{remark}[theorem]{Remark}
\renewcommand{\geq}{\geqslant}
\renewcommand{\leq}{\leqslant}
\newcommand{\dd}{\operatorname{d}\! }
\newcommand{\dt}{\operatorname{d}\! t}
\newcommand{\ds}{\operatorname{d}\! s}
\newcommand{\dw}{\operatorname{d}\! W}
\newcommand{\dtp}{\dt\otimes\mathrm{d}\BP}
\newcommand{\BE}{\mathbb{E}}
\newcommand{\BR}{\mathbb{R}}
\newcommand{\BP}{\mathbb{P}}
\newcommand{\essinf}{\ensuremath{\mathrm{ess\:inf\:}}}
\newcommand{\esssup}{\ensuremath{\mathrm{ess\:sup\:}}}
\newcommand{\seta}{\mathscr{A}}
\begin{document}

\begin{frontmatter}

\title{Competitive optimal portfolio selection in a non-Markovian financial market:
A  backward stochastic differential equation study
% \tnoteref{mytitlenote}
}
\author[1]{Guangchen Wang}
\ead[url]{wguanchen@sdu.edu.cn}

\author[2]{Zuo Quan Xu}
\ead[url]{maxu@polyu.edu.hk}

\author[1,2]{Panpan Zhang}
\ead[url]{zhangpanpan@mail.sdu.edu.cn}

\address[1]{School of Control Science and Engineering, Shandong University, Jinan 250061, China}
\address[2]{Department of Applied Mathematics, The Hong Kong Polytechnic University, Hong Kong, China}
%\cortext[cor1]{Corresponding author}

\begin{abstract}
This paper studies a competitive optimal portfolio selection problem in a model where
the interest rate, the appreciation rate and volatility rate of the risky asset are all stochastic processes, thus forming a non-Markovian financial market.
In our model, all investors (or agents) aim to obtain an above-average wealth at the end of the common investment horizon.
This competitive optimal portfolio problem is indeed a non-zero stochastic differential game problem. The quadratic BSDE theory is applied to tackle the problem and Nash equilibria in suitable spaces are found.
We discuss both the CARA and CRRA utility cases. For the CARA utility case,
there are three possible scenarios depending on market and competition parameters: a unique Nash equilibrium, no Nash equilibrium, and infinite Nash equilibria.
The Nash equilibrium is given by the solutions of a quadratic BSDE and a linear BSDE with unbounded coefficient when it is unique.
Different from the wealth-independent Nash equilibria in the existing literature, the equilibrium in our paper is of feedback form of wealth.
For the CRRA utility case, the issue is a bit more complicated than the CARA utility case.
We prove the solvability of a new kind of quadratic BSDEs with unbounded coefficients.
A decoupling technology is used to relate the Nash equilibrium to a series of 1-dimensional quadratic BSDEs.
With the help of this decoupling technology, we can even give the limiting strategies for both cases when the number of agent tends to be infinite.
\end{abstract}

\begin{keyword}
Relative performance \sep random coefficients \sep backward stochastic differential equation\sep Nash equilibrium
\end{keyword}

\end{frontmatter}

\section{Introduction }

Nobel laureate R. C. Merton first proposed a continuous time model for optimal portfolio and consumption problem in his pioneering work \cite{Merton}, which is actually a stochastic optimal control problem. Lacker and Zariphopoulou extended Merton's work to relative performance criteria case in \cite{Zariphopoulou 1}. In their work, fund managers (or agents) trading between a common risk-less bond and an individual stock in a Markovian financial market, where the market parameters are all constant. All agents share a common investment period and they aim to maximize their expected utility of above-average wealth at the end of this period. This competitive optimal portfolio problem is indeed a non-zero stochastic differential game problem. In \cite{Zariphopoulou 1}, authors gave an explicit constant equilibrium strategy for the game, and showed that it is unique in the class of constant equilibria. Later, Lacker and Soret \cite{Lacker2} expanded the model of \cite{Zariphopoulou 1} to a situation involving competitive consumption. Competitive optimal portfolio problems have been extensively studied in the past few years, see, e.g., \cite{MF}, \cite{Huang}, \cite{AF}.

Evidently, financial markets are mostly non-Markovian in practice. For instance, the market parameters, such as the interest rate, stock
appreciation rates and volatilities, are affected by noises caused by various factors such as global or local
politics, economic growth rate, legal, military, corporate governance. Hence, it is necessary to allow the
market parameters to be stochastic processes. In a non-Markovian financial market, the exponential utility, also called the constant absolute risk aversion (CARA) utility, has been considered in \cite{Zariphopoulou 2} and \cite{FU Guangxing}.
Authors studied the competitive optimal portfolio problem in the CARA utility case with a partially observable stock appreciation rate in
\cite{Zhou Chao} and \cite{Panpan}.
Because the interest rate is assumed to be a constant, the Nash equilibria in \cite{Zariphopoulou 2}, \cite{FU Guangxing}, \cite{Zhou Chao} and \cite{Panpan} are all wealth-independent.
For constant relative risk aversion (CRRA) utility (that is, power or logarithmic utility), the literature is sparse. Among the rare results,
in \cite{JEDC}, the competitive optimal portfolio problem for two investors and power utility is studied by Hamilton-Jacobi-Bellman-Isaacs (HJBI) equations. Hu et al. \cite{Hu AAP2}, \cite{Xu arXiv2} investigated optimal portfolio problems for exponential, power and logarithmic utility by backward stochastic differential equations (BSDEs).

In this paper, we consider a competitive optimal portfolio problem for exponential, power and logarithmic utility in a non-Markovian financial market. We use the framework and notions of \cite{Zariphopoulou 1}, but allow for a more general non-Markovian market model (beyond the log-normal case).
In our model, the interest rate, the appreciation rate and volatility rate of the risky stock are all stochastic processes.
Inspired by \cite{Xu arXiv2}, we use a BSDE method to handle this game problem.
As a powerful tool to handle optimal portfolio problems in non-Markovian markets, BSDE was first introduced by Pardoux and Peng \cite{Peng1}. El Karoui et al. \cite{Peng2} gave some applications of BSDEs in mathematical finance. Benefiting from numerous studies on the solvability of BSDEs, including \cite{Fan Hu Tang} and \cite{Xu arXiv1}, we can give the explicit expressions of Nash equilibria by the solutions of some BSDEs.

We use admissible best response (ABR) method to discuss the Nash equilibrium in a suitable space for the exponential utility case.
We prove there is a unique admissible best response in this case and equate the Nash equilibrium to the solution of a system of linear BSDEs.
There are three cases depending on the market and competition parameters: a unique Nash equilibrium, no Nash equilibrium, and infinite Nash equilibria.
In the first case, the unique Nash equilibrium is given by the solutions of a quadratic BSDE and a linear BSDE with unbounded coefficient.
Different from the Nash equilibrium in \cite{Zariphopoulou 2}, \cite{FU Guangxing}, \cite{Zhou Chao} and \cite{Panpan}, the one in our paper is a feedback of wealth.

In the CRRA utility case, we find a Nash equilibrium in a suitable space by the best response (BR) method.
This issue is a bit more complicated than the exponential utility case because the Nash equilibrium is given by a coupled $n$-dimensional quadratic BSDEs.
We use a decoupling technology to relate the Nash equilibrium to a series of 1-dimensional quadratic BSDEs.
With the help of this decoupling technology, we can even give the limiting strategy when the number of agent tends to be infinite.
To obtain the admissibility of the Nash equilibrium,
we prove the solvability of a new kind of quadratic BSDE with unbounded coefficient.

The remainder of this paper is organized as follows. In Section \ref{PF}, we formulate the competitive optimal portfolio selection problem in the CARA utility case and in the CRRA utility case in a non-Markovian financial market, and introduce two methods to obtain a Nash equilibrium.
In Section \ref{CARA}, we use the ABR method to discuss the Nash equilibrium in some space in the CARA utility case.
In Section \ref{CRRA}, we use the BR method and a decoupling technology to give a Nash equilibrium in some space in the CRRA utility case.
Finally, Section \ref{Conclusion} concludes the paper.

\subsection*{Notation}% \label{PF}

Let $(\Omega, \mathcal{F}, \{\mathcal{F}_{t}\}_{0\leq t\leq T}, \BP)$ be a fixed complete filtered probability space
where $\mathcal{F}=\mathcal{F}_{T}$ and $T>0$ is a fixed time horizon. Let $\BE$ be the expectation with respect to (w.r.t.) $\BP$.
In this space, we define a standard one-dimensional Brownian motion $W(t)$, $t\in[0, T]$.
We assume $\mathcal{F}_{t}=\sigma\{ W(s): 0\leqslant s\leqslant t\}\bigvee \mathcal{N}$, where
$\mathcal{N}$ is the totality of all the $\BP$-null sets of $\mathcal{F}$.

As usual, we denote by $\BR^{n}$ the $n$-dimensional real-valued Euclidean space with the Euclidean norm $|\cdot|$,
%by $\BR^{n\times m}$ the set of all $(n\times m)$ real-valued matrices,
by $\BR^{n}_{+}$ the set of vectors in $\BR^{n}$ whose components are all positive numbers,
by $\BR$ the set of real numbers,
and by $\mathbb{N}$ the set of natural numbers.
% and $\BR_{\gg1}=\BR^{1}_{+}.$
% the set of all positive real numbers.
%Here and hereafter, the superscript $\top$ denotes the transpose of vectors or matrices.
%and by $\mathbb{S}^{n} $ the set of all $(n\times n)$ real-valued symmetric matrices.
%%$\mathbb{S}^{n}_{+}$ (resp., $\mathbb{S}^{n}_{-}$) the set of $(n\times n)$ positive (resp., negative) semidefinite real-valued matrices,
%%and $\hat{\mathbb{S}}^{n}_{+}$ (resp., $\hat{\mathbb{S}}^{n}_{-}$) the set of $(n \times n)$ positive (resp., negative)
%%definite real-valued matrices.
%We use $I_{n}$ to denote the $n$-dimensional identity matrix.
%For any $S\in \mathbb{S}^n$, $c\in \BR$,
%we write $S\geq cI_{n}$ if $ y^{\top} S y\geq c|y|^2$ for any $y\in \BR^{n}$,
%and define $S\leq cI_{n}$ similarly.
%We denote the positive and negative parts of a constant $x$ as $x^{+}=\max\{x,0\}$ and $x^{-}=\max\{-x,0\}$.
We use the following spaces throughout the paper:\bigskip\\
\begin{tabular}{rll}
$L^{\infty}_{\mathcal{F}_T}(\BR^{n}):$ &the set of all $\BR^{n}$-valued $\mathcal{F}_T$-measurable essentially bounded random variables;\smallskip\\

$L^2_{\mathcal{F}_T}(\BR^{n}):$& the set of all $\BR^{n}$-valued $\mathcal{F}_T$-measurable random variables $\xi$ such that $\BE\big[ |\xi|^2 \big]<\infty$;\smallskip\\

%$L^{\infty}_{\mathcal{F}}(0,T;\BR^{n}):$& the set of all $\BR^{n}$-valued $\mathcal{F}_t$-adapted essentially bounded processes defined\\
%& on $[0, T ]$;\smallskip\\
$L^{\infty}_{\mathcal{F}}(0,T;\BR^{n}):$& the set of all $\mathcal{F}_t$-adapted essentially bounded processes
$v:[0,T]\times \Omega\rightarrow \BR^{n}$; \smallskip\\

$L^{\infty}_{\mathcal{F}}(0,T;\BR_{\gg1}):$& the set of all $\mathcal{F}_t$-adapted processes
$v:[0,T]\times \Omega\rightarrow (0,+\infty)$ such that there is a \smallskip\\
&constant $c>0$, $c^{-1}\leq v(t)\leq c$, a.e. $\dtp$; \smallskip\\

$L^2_{\mathcal{F}}(0,T;\BR^{n}):$& the set of all $\mathcal{F}_t$-adapted processes
$v:[0,T]\times \Omega\rightarrow \BR^{n}$ such that \smallskip\\
&$\BE\big[\int_0^T |v(t)|^2 \dt \big]<\infty$; \smallskip\\

$L^{2,\text{loc}}_{\mathcal{F}}(0,T;\BR^{n}):$& the set of all $\mathcal{F}_t$-adapted processes
$v:[0,T]\times \Omega\rightarrow \BR^{n}$ such that \smallskip\\
&$\BP\big(\int_0^T |v(t)|^2 \dt <\infty \big)=1$; \smallskip\\

$L^{2}_{\mathcal{F}}(C(0,T);\BR^{n}):$ & the set of all $\mathcal{F}_t$-adapted processes with continuous sample paths $v:[0,T]\times \Omega\rightarrow \BR^{n}$ \smallskip\\
& such that $\BE\big[\sup \limits_{t\in[0,T]} |v(t)|^2 \big]<\infty$;\smallskip\\

$L^{0}_{\mathcal{F}}(C(0,T);\BR^{n}):$ & the set of all $\mathcal{F}_t$-adapted processes with continuous sample paths.\smallskip\\

\end{tabular}
%These definitions are generalized in the obvious way to the cases that $\BR^{n}$ is replaced by $\BR$ or $\BR_{\gg1}$.

In our analysis, the equations or inequalities for stochastic processes (resp. random variables) hold generally in the sense that $\dtp$ almost everywhere (resp. $\dd\BP$ almost surely).
In addition, for notation simplicity, some arguments such as $s$, $t$, $\omega$ may be suppressed in some circumstances when no confusion occurs. The processes considered in this paper, except specified, are all stochastic, so we do not write the argument $\omega$ explicitly when defining or using them.

We recall the definition of BMO martingale, which is a short form of the martingale of bounded mean oscillation.
For any process $f\in L^{2}_{\mathcal{F}}(0,T;\BR)$, the process $\int_0^{\cdot} f(s)\dw(s)$ is a BMO martingale on $[0,T]$ if and only if there exists a positive constant $c$ such that
$$ \BE\Big[ \int_{\tau}^T |f(s)|^2\ds\;\;\Big| \; \;\mathcal{F}_{\tau} \Big]\leq c$$
hold for all $\{\mathcal{F}_t\}_{t\geq0}$-stopping times $\tau \leq T$.
From now on, we use $c$ to represent a generic positive constant, which can be different from line to line.
We set
$$L^{2,\: \mathrm{BMO}}_{\mathcal{F}}(0,T;\BR)=\Big\{f \in L^{2}_{\mathcal{F}}(0,T;\BR)\;\Big| \;
\int_0^{\cdot} f(s) \dw(s) \text{ is a BMO martingale on } [0,T]\Big\}.$$
%For $\Lambda\in L^{2,\: \mathrm{BMO}}_{\mathcal{F}}(0,T;\BR)$, we set
%$$\Big\|\int_0^{\cdot} \Lambda(s) \dw(s)\Big\|_{\mathrm{BMO}_2}\triangleq
%\sup\limits_{\tau\leq T}\Big( \operatorname*{ess\,sup} \BE\Big[ \int_{\tau}^T |\Lambda(s)|^2\ds\;\;\Big| \; \;\mathcal{F}_{\tau} \Big] \Big)^{\frac{1}{2}} <\infty,$$
%here and hereafter the $\sup \limits_{\tau\leq T}$ is taken over all $\{\mathcal{F}_t\}_{t\geq0}$-stopping times $\tau\leq T$.
For any $Z\in L^{2,\: \mathrm{BMO}}_{\mathcal{F}}(0,T;\BR)$, the Dol$\acute{\text{e}}$ans-Dade stochastic exponential
$$\mathcal{E}\Big(\int_0^{\cdot} Z(s) \dw(s) \Big)\triangleq
\exp\Big\{ -\frac{1}{2}\int_0^{\cdot} |Z(s)|^2 \ds +\int_0^{\cdot} Z(s) \dw (s)\Big\}$$
is a uniformly integrable martingale on $[0,T]$. Moreover, under the probability measure $\widetilde{\BP}$ defined by
$$\frac{\dd\widetilde{\BP}}{\dd\BP}\bigg|_{\mathcal{F}_T}=\mathcal{E}\Big(\int_0^{T} Z(s) \dw(s) \Big),$$
the process $\widetilde{W}(\cdot)=W(\cdot)-\int_0^{\cdot} Z(s)\ds$ is a standard Brownian motion.
If $\int_0^{\cdot} \Lambda(s)\dw(s)$ is a BMO martingale under $\BP$, then
$\int_0^{\cdot} \Lambda(s)\dd\widetilde{W}(s)$ is also a BMO martingale under $\widetilde{\BP}$.
For more details about BMO martingales, interested readers can refer to \cite{Kazamaki}.

\section{Problem Formulation } \label{PF}

We introduce the market model and two kinds of relative performance (the CARA utility case and the CRRA utility case) in the first subsection.
For a more rigorous model, we give the definitions of admissible strategy sets and Nash equilibrium in the second subsection,
which makes our game problems well-defined.

\subsection{A non--Markovian financial market and agents' preferences}
Consider a financial market consisting of a risk-free asset (the money market instrument or bond)
whose price is $S_0$ and a risky security (the stock) whose price is $S_1$. These asset prices are driven by the following stochastic differential equations (SDEs):
\begin{equation*}
\left\{
\begin{aligned}
\dd S_0(t)&= r(t)S_0(t)\dt,\\
\dd S_1(t)&= S_1(t)\big[\mu(t)\dt+\sigma(t)\dw(t)\big],\\
S_0(0)&=s_0,\,\,S_1(0)=s_1,
\end{aligned}
\right.
\end{equation*}
where $r\in L^{\infty}_{\mathcal{F}}(0,T;\BR)$ is the interest rate process,
$\mu\in L^{\infty}_{\mathcal{F}}(0,T;\BR)$ and
$\sigma\in L^{\infty}_{\mathcal{F}}(0,T;\BR_{\gg1})$ are the appreciation rate process and volatility rate process of the risky security.
The initial prices $s_0$ and $s_1$ are positive constants.
In the following, we often omit the argument $t$ for the processes $r$, $\mu$ and $\sigma$ for notation simplicity, but it needs to be remembered that these market parameters are $\mathcal{F}_t$-adapted stochastic processes.
Therefore, it is a non-Markovian financial market.
Let $\rho\triangleq \frac{\mu-r} { \sigma }$ denote the Sharpe ratio process.
$\rho$ is also called the market price of risk or risk premium.
 Clearly, $\rho\in L^{\infty}_{\mathcal{F}}(0,T;\BR)$. If $\rho$ is identical to zero, then there is no incentive for agents to invest in the stock. Hence, we assume $\rho$ is not identical to zero in the rest of this paper.

\begin{remark}
For notation simplicity, this paper only considers one stock. There is no intrinsic difficulty in generalizing to multiple-stock case where the analysis is a more cumbersome notation.
\end{remark}

There are $n$ competing agents (investors or fund managers) in this non-Markovian financial market.
We use $\mathbf{A}^{(j)}$ to represent the $j$th agent.
For $1\leq j \leq n$, let $X^{(j)}$ be the wealth process of $\mathbf{A}^{(j)}$.
We denote the arithmetic mean and geometric mean of the agents' wealth processes as
$$\overline{X}\triangleq \frac{1}{n}\sum^n_{j=1} X^{(j)} \quad\text{ and }\quad
\widehat{X}\triangleq \bigg[ \prod_{j=1}^{n}X^{(j)} \bigg]^{\frac{1}{n}}.$$
%For $1\leq j\leq n$, $x^{(j)}\in\BR_{\gg1}$ is the initial wealth of $\mathbf{A}^{(j)}$,
% and $\pi^{(j)}\in \mathcal{C} \subseteq L^{{2,\: \mathrm{BMO}}}_{\mathcal{F}^W}(0,T;\BR)$ is an admissible strategy of $\mathbf{A}^{(j)}$.
% As is usually the case for exponential and power/logarithmic risk preferences, $\pi^{(j)}$ is taken to be the absolute wealth and the fraction of wealth invested in the stock, respectively. The values $\pi^{(j)}$ may be negative, indicating that the agent shorts the stock.
We assume all agents have a common investment horizon $[0,T]$ and aim to maximize their expected utility of above average wealth at $T$.
The utility functions are agent-specific functions of both his own wealth $X^{(j)}$ and
a ``competition component'' $\overline{X}$ or $\widehat{X}$, which depends on all agents' wealth.
We study two representative cases: the CARA utility case and the CRRA utility case, which cover the most popular exponential, power and logarithmic utilities.

We first consider the CARA utility case, where we assume all agents use some CARA utility functions.
For $ 1\leq j \leq n$, $\pi^{(j)}$ represents the amount invested in the stock, and $(X^{(j)}-\pi^{(j)})$ is the amount invested in the risk-free asset. Then agents' self-financing wealth processes satisfy a system of SDEs:
\begin{equation}\label{201}
\left\{
\begin{aligned}
\dd X^{(j)}(t)=\,&\big[r(t)X^{(j)}(t)+ (\mu(t)-r(t))\pi^{(j)}(t) \big]\dt
+\sigma(t)\pi^{(j)}(t) \dw(t),\\
X^{(j)}(0)=\,&x^{(j)}\in \BR_{+},\quad 1\leq j \leq n.
\end{aligned}
\right.
\end{equation}
Because the objectives of the agents will be correlated, their admissible strategies' are also correlated, and their precise definitions will be given in the next section.
The CARA utility function with the absolute risk tolerance constant $\delta>0$ is defined as
\begin{equation*}
u_1(x;\delta)=-\exp\Big\{-\frac{x}{\delta}\Big\},~x\in \BR.
\end{equation*}
For $ 1\leq j \leq n$, the aim of $\mathbf{A}^{(j)}$ is to maximize
\begin{equation}
\label{202}
J_1^{(j)}\big( \pi^{(j)},\pi^{(-j)}; x^{(j)},x^{(-j)} \big)
=\BE\big[ u_1\big(X^{(j)}(T)-\theta_j\overline{X}(T);\delta_j\big) \big],
\end{equation}
where $\pi^{(-j)}\triangleq(\pi^{(1)}, \ldots ,\pi^{(j-1)}, $ $\pi^{(j+1)},\cdots , \pi^{(n)})$ is an admissible strategy vector of all agents except $\mathbf{A}^{(j)}$,
$x^{(-j)}$ is an initial wealth vector of all agents except $\mathbf{A}^{(j)}$,
$\delta_j\in(0,+\infty)$ is the absolute risk tolerance constant of $\mathbf{A}^{(j)}$,
and $\theta_j\in[0,1]$ is the competition weight of $\mathbf{A}^{(j)}$.
For simplicity, we call this $n$-agent game the game \eqref{201}-\eqref{202}.

In the CRRA utility case, we assume all agents use some CRRA utility functions. For $ 1\leq j \leq n$, $\pi^{(j)}$ represents the proportion invested in the stock, and $(1-\pi^{(j)})$ is the proportion invested in the risk-free asset.
Then agents' self-financing wealth processes satisfy a system of SDEs:
\begin{equation}\label{203}
\left\{
\begin{aligned}
\dd X^{(j)}(t)=\,&\big[r(t)+ (\mu(t)-r(t))\pi^{(j)}(t) \big]X^{(j)}(t)\dt
+ \sigma(t) \pi^{(j)}(t)X^{(j)}(t) \dw(t),\\
X^{(j)}(0)=\,&x^{(j)}\in \BR_{+},\quad 1\leq j \leq n.
\end{aligned}
\right.
\end{equation}
The CRRA utility function with the relative risk tolerance constant $\delta>0$ is defined, for $x>0$, as
\begin{equation*}
u_2(x;\delta)=
\begin{cases}
\frac{x^{1-\frac{1}{\delta}}}{1-\frac{1}{\delta}}, &\mbox{if~~} \delta\in(0,1)\cup(1,+\infty),\\
\log x, &\mbox{if~~}\delta=1.
\end{cases}
\end{equation*}
For $ 1\leq j \leq n$, the aim of $\mathbf{A}^{(j)}$ is to maximize
\begin{equation}
\label{204}
J_2^{(j)}\big( \pi^{(j)},\pi^{(-j)} ; x^{(j)},x^{(-j)} \big)
=\BE\big[ u_2\big(X^{(j)}(T)\widehat{X}(T)^{-\theta_j};\delta_j \big) \big],
\end{equation}
where $\theta_j\in[0,1]$ have the same meaning as in \eqref{202},
while $\delta_j\in(0,+\infty)$ is the relative risk tolerance constant of $\mathbf{A}^{(j)}$.
For simplicity, we call this $n$-agent game the game \eqref{203}-\eqref{204}.

\begin{remark}
The terms in utility functions in \eqref{202} and \eqref{204} can be written as
\begin{equation*}
\begin{aligned}
X^{(j)}(T)-\theta_j\overline{X}(T)&= (1-\theta_j)X^{(j)}(T)+\theta_j \big[ X^{(j)}(T)-\overline{X}(T) \big],\\
X^{(j)}(T)\big[\widehat{X}(T)\big]^{-\theta_j}&= \big[X^{(j)}(T)\big]^{1-\theta_j}
\bigg[ \frac{X^{(j)}(T)}{\widehat{X}(T)} \bigg]^{\theta_j},
\end{aligned}
\end{equation*}
where $X^{(j)}(T)$ is the absolute terminal wealth of $\mathbf{A}^{(j)}$, the difference $\big[ X^{(j)}(T)-\overline{X}(T) \big]$ and the ratio $\Big[\frac{X^{(j)}(T)}{\widehat{X}(T)}\Big]$ measure the relative terminal wealth.
The larger $\theta_j$ is, the more weight $\mathbf{A}^{(j)}$ places on the relative wealth, which means she/he is more competitive.
When $\theta_j = 0$, $\mathbf{A}^{(j)}$ does not care about relative wealth and she/he is not at all competitive, and the above two game problems degenerate into classical Morton's problems.
So we call $\theta_j$ the competition weight.
\end{remark}

\subsection{Admissible strategy sets and Nash equilibria}

For the above two $n$-agent games, our goal is to find their Nash equilibria in spaces
$\seta_1$ and $\seta_2$ respectively.
We first introduce the definitions of two admissible strategy sets and Nash equilibrium.

For any $\pi^{(j)}\in L^{2,\text{loc}}_{\mathcal{F}}(0,T;\BR)$, linear SDEs \eqref{201} and \eqref{203} admit unique solutions
$X^{(j)}\in L^{0}_{\mathcal{F}}(C(0,T);\BR)$. In order to let the functionals \eqref{202} and \eqref{204} make sense and the game problems to be well-defined, we define the following two admissible strategy sets.

\begin{definition}\label{admissible set 1}
In the CARA utility case, the admissible strategy set,
a subset of $\big(L^{2,\text{loc}}_{\mathcal{F}}(0,T;\BR)\big)^n$, is denoted by $\seta_1$.
% which is the product space of $n$ $L^{2,\text{loc}}_{\mathcal{F}}(0,T;\BR)$,
It holds that $(\pi^{(1)}, \pi^{(2)},\ldots ,\pi^{(n)})\in\seta_1 $ if and only if
\begin{enumerate}[(i)]
\item for all $1\leq j\leq n$, SDE \eqref{201} has a unique strong solution $X^{(j)}\in L^{0}_{\mathcal{F}}(C(0,T);\BR)$;\\
\item for any sequence of nondecreasing $\{\mathcal{F}_t\}_{t\geq0}$-stopping times $\tau_k$, $k\in\mathbb{N},$ such that $\BP\big(\lim\limits_{k\rightarrow\infty}\tau_k=T\big)=1$, it always holds that, for all $1\leq j \leq n$,
the family $\Big\{ \exp\big\{ -\frac{\psi(T\wedge \tau_k)}{\delta_j} \big[ X^{(j)}(T\wedge \tau_k)
-\theta_j\overline{X}(T\wedge \tau_k)\big]+\varphi(T\wedge \tau_k)\big\} \Big\}_{k\in \mathbb{N}}$ is uniformly integrable,
where processes $\psi$ and $\varphi$ are unique solutions of BSDEs \eqref{3201} and \eqref{3202}, respectively.
\end{enumerate}
\end{definition}

\begin{definition}\label{admissible set 2}
In the CRRA utility case,
the admissible strategy set, a subset of
$\big(L^{2,\text{BMO}}_{\mathcal{F}}(0,T;\BR)\big)^n$, is denoted by $\seta_2$.
It holds that $(\pi^{(1)}, \pi^{(2)},\ldots ,\pi^{(n)})\in\seta_2$ if and only if
\begin{enumerate}[(i)]
\item for all $1\leq j\leq n$, SDE \eqref{203} has a unique strong solution $X^{(j)}\in L^{0}_{\mathcal{F}}(C(0,T);\BR)$;\\
\item
for all $1\leq j\leq n$, BSDE \eqref{4101} admits a solution $(P^{(j)}, \Lambda^{(j)})\in L_{\mathcal{F}}^{\infty}(0,T;\BR)\times L_{\mathcal{F}}^{2,\: \mathrm{BMO}}(0,T;\BR)$;
\item
for any sequence of nondecreasing $\{\mathcal{F}_t\}_{t\geq0}$-stopping times $\tau_k$, $k\in\mathbb{N},$ such that $\BP\big(\lim\limits_{k\rightarrow\infty}\tau_k=T\big)=1$, it always holds that, for all $1\leq j \leq n$,
the family $\Big\{ \big[X^{(j)}(T\wedge\tau_k)\big]^{\beta_j} \big[\widehat{X}^{(-j)}(T\wedge\tau_k)\big]^{\gamma_j} e^{P^{(j)}(T\wedge\tau_k)} \Big\}_{k\in \mathbb{N}}$ is uniformly integrable,
where the constants $\beta_j$, $\gamma_j$ are defined in Section \ref{CRRA}, and $(P^{(j)},\Lambda^{(j)})$ are given in (ii).
\end{enumerate}
\end{definition}
Both $\seta_1$ and $\seta_2$ are nonempty, because $(0,0, \cdots,0)$ is an element of $\seta_1$ and $ \seta_2$.

\begin{remark}
For any $(\pi^{(1)},\pi^{(2)}, \cdots,\pi^{(n)})\in \seta_1$ (resp. $\in \seta_2$), the functional \eqref{202} (resp. \eqref{204}) makes sense (see the proofs of Theorem \ref{best response CARA}, Theorem \ref{best response 3} and Theorem \ref{best response 4}). Furthermore, the value functions defined in \eqref{3200} and \eqref{4200} satisfy
$$-\infty <V_1^{(j)}(x^{(j)},x^{(-j)};\pi^{(-j)})\leq 0,\quad
-\infty <V_2^{(j)}(x^{(j)},x^{(-j)};\pi^{(-j)})< +\infty,\quad 1\leq j\leq n.$$
Therefore, the above two game problems are well-defined.
\end{remark}

\begin{definition}
An admissible strategy $(\pi^{(1,*)},\pi^{(2,*)}, \cdots,\pi^{(n,*)})\in \seta_1$ (resp. $\in \seta_2$) is a Nash equilibrium for game \eqref{201}-\eqref{202} (resp. \eqref{203}-\eqref{204}), if for all $1\leq j\leq n$, and for all $\pi^{(j)}$ such that $(\pi^{(j)}, \pi^{(-j,*)})\in \seta_1$ (resp. $\in \seta_2$), one has
\begin{equation*}
\begin{aligned}
J_1^{(j)}(\pi^{(j)}, \pi^{(-j,*)};x^{(j)},x^{(-j)})
\leq J_1^{(j)}(\pi^{(j,*)}, \pi^{(-j,*)};x^{(j)},x^{(-j)})
\end{aligned}
\end{equation*}
%\mbox{\red{$\mathcal{C}_1^j$ is not well defined. we shall need $(\pi^{(j)}, \pi^{(-j,*)})\in C^{1}$ }}
\begin{equation*}
\begin{aligned}
\big( \text{resp. } J_2^{(j)}(\pi^{(j)}, \pi^{(-j,*)};x^{(j)},x^{(-j)})
\leq J_2^{(j)}(\pi^{(j,*)}, \pi^{(-j,*)};x^{(j)},x^{(-j)})\big),
\end{aligned}
\end{equation*}
where
$\pi^{(-j,*)}\triangleq(\pi^{(1,*)}, \ldots ,\pi^{(j-1,*)}, $ $\pi^{(j+1,*)},\cdots , \pi^{(n,*)})$.
\end{definition}

\subsection{ABR and BR methods}

In general, there are two steps to obtain a Nash equilibrium. The first step is to solve an optimal control problem, and the second step is to solve the $n$-variable system of equations.
We now introduce two methods to obtain a Nash equilibrium, both of which involve the above two two steps.

$\bullet$ \textbf{ABR Method}

\textbf{Step 1. (Finding an admissible best response of an agent)} For each fixed $j\in\{1,\cdots, n\}$, $k\in\{1,2\}$, and an arbitrary but fixed competitors'
strategy vector $\pi^{(-j)}$, a response strategy $\check{\pi}^{(j)}(\pi^{(-j)})$ is called an \textit{admissible best response} of $\mathbf{A}^{(j)}$, if
$$( \check{\pi}^{(j)}(\pi^{(-j)}),\pi^{(-j)})\in \seta_k$$
and for all $(x^{(j)},x^{(-j)})\in\BR^n_{+}$,
\begin{equation*}
\begin{aligned}
\sup \limits_{\pi^{(j)}\in \seta_k} J_k^{(j)}(\pi^{(j)}, \pi^{(-j)};x^{(j)},x^{(-j)})
= J_k^{(j)}(\check{\pi}^{(j)}(\pi^{(-j)}), \pi^{(-j)};x^{(j)},x^{(-j)})
\end{aligned}
\end{equation*}
where $\sup \limits_{\pi^{(j)}\in \seta_k}$ is taken over all $\pi^{(j)}$ such that $(\pi^{(j)},\pi^{(-j)})\in \seta_k$ here and hereafter.

\textbf{Step 2. (The strategies of all agents are mutually best responses)}
For all $1 \leq j \leq n$, let $\check{\pi}^{(j)}(\pi^{(-j)})$ be an admissible best response strategy of $\mathbf{A}^{(j)}$ found in Step 1
when $k=1$ (resp. $k=2$). Consider the following $n$-variable system of equations:
\begin{equation}\label{205}
\left\{
\begin{aligned}
\pi^{(1)}&=\check{\pi}^{(1)}(\pi^{(-1)}),\\
\pi^{(2)}&=\check{\pi}^{(2)}(\pi^{(-2)}),\\
&\vdots\\
\pi^{(n)}&=\check{\pi}^{(n)}(\pi^{(-n)}).\\
\end{aligned}
\right.
\end{equation}
If it has a solution $(\pi^{(1)},\pi^{(2)},\cdots,\pi^{(n)})$, then this solution is a Nash equilibrium of game \eqref{201}-\eqref{202} (resp. \eqref{203}-\eqref{204}).

The advantage of ABR method is that we can use it to obtain the existence and uniqueness of the Nash equilibrium.
%\begin{assumption}\label{ass1}
%For all $1\leq j\leq n$, $(x^{(j)},x^{(-j)})\in\BR^n_{+} $ and an arbitrary but fixed competitors'
%strategy vector $\pi^{(-j)}$, for any two strategies $\pi^{(j)}$, $\tilde{\pi}^{(j)}$
%such that $(\pi^{(j)},\pi^{(-j)})\in \seta_k$ and $(\tilde{\pi}^{(j)},\pi^{(-j)})\in \seta_k$, the following fact holds:
%$$J_k^{(j)}(\pi^{(j)}, \pi^{(-j)};x^{(j)},x^{(-j)})= J_k^{(j)}(\tilde{\pi}^{(j)}, \pi^{(-j)};x^{(j)},x^{(-j)}),\text{ if and only if }
%\pi^{(j)}=\tilde{\pi}^{(j)},$$
%where $k\in \{1,2\}$.
%\end{assumption}

\begin{theorem} \label{the existence and uniqueness of NE}
%Let Assumption \ref{ass1} hold.
Assume that for all agents, their admissible best responses are unique when $k=1$ (resp. $k=2$).
Then, $(\pi^{(1)},\pi^{(2)},\cdots,\pi^{(n)})$ is a Nash equilibrium of game \eqref{201}-\eqref{202} (resp. \eqref{203}-\eqref{204}) if and only if $(\pi^{(1)},\pi^{(2)},\cdots,\pi^{(n)})$ is a solution of $n$-variable system of equations \eqref{205}.
%\textbf{(The existence of a Nash equilibrium)}
% If for an admissible best responses, \eqref{205} has a solution, then we can conclude that there exsits Nash equilibrium of game \eqref{201}-\eqref{202} (resp. \eqref{203}-\eqref{204});
% If for all admissible best responses, \eqref{205} has no solution, then we can conclude that there is no Nash equilibrium of game \eqref{201}-\eqref{202} (resp. \eqref{203}-\eqref{204});
%
% \textbf{(The uniqueness of a Nash equilibrium)}
%If Assumption \ref{ass1} holds for $k=1$ (resp. $k=2$) and for all agents, their admissible best responses are unique and \eqref{205} has a unique solution, then we can conclude that there is a unique Nash equilibrium of game \eqref{201}-\eqref{202} (resp.\eqref{203}-\eqref{204}).
\end{theorem}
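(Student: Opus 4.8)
The plan is to prove the equivalence purely by unwinding the two definitions in play — Nash equilibrium and (unique) admissible best response — so that no new analytic input is needed; the work is done by the facts already on the table, namely the existence of admissible best responses and the implication recorded in Step~2 of the ABR method. Throughout I would abbreviate $J_k^{(j)}(\pi^{(j)},\pi^{(-j)};x^{(j)},x^{(-j)})$ by $J_k^{(j)}(\pi^{(j)};\pi^{(-j)})$, the initial endowments being fixed, and argue the cases $k=1$ and $k=2$ in parallel.

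For the ``if'' direction I would start from a solution $(\pi^{(1)},\dots,\pi^{(n)})$ of \eqref{205}, so that $\pi^{(j)}=\check{\pi}^{(j)}(\pi^{(-j)})$ for every $j$. Admissibility is then automatic: by the definition of an admissible best response $(\check{\pi}^{(j)}(\pi^{(-j)}),\pi^{(-j)})\in\seta_k$, which is precisely $(\pi^{(1)},\dots,\pi^{(n)})\in\seta_k$. Next, fixing $j$ and any deviation $\phi^{(j)}$ with $(\phi^{(j)},\pi^{(-j)})\in\seta_k$, the best-response property gives
$$J_k^{(j)}(\phi^{(j)};\pi^{(-j)})\leq J_k^{(j)}\big(\check{\pi}^{(j)}(\pi^{(-j)});\pi^{(-j)}\big)=J_k^{(j)}(\pi^{(j)};\pi^{(-j)}),$$
which is exactly the Nash inequality for $\mathbf{A}^{(j)}$; letting $j$ range over $\{1,\dots,n\}$ completes this half. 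This part uses no uniqueness and merely repeats the claim already made in Step~2 of the ABR method.

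For the ``only if'' direction I would take a Nash equilibrium $(\pi^{(1)},\dots,\pi^{(n)})\in\seta_k$ and fix $j$. The Nash inequality says precisely that $\pi^{(j)}$ attains $\sup\limits_{\phi^{(j)}\in\seta_k}J_k^{(j)}(\phi^{(j)};\pi^{(-j)})$ (the supremum taken over $\phi^{(j)}$ with $(\phi^{(j)},\pi^{(-j)})\in\seta_k$, as in Step~1), while $(\pi^{(j)},\pi^{(-j)})\in\seta_k$. Hence $\pi^{(j)}$ is itself an admissible best response of $\mathbf{A}^{(j)}$ against $\pi^{(-j)}$, and the standing uniqueness hypothesis forces $\pi^{(j)}=\check{\pi}^{(j)}(\pi^{(-j)})$. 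Since $j$ was arbitrary, $(\pi^{(1)},\dots,\pi^{(n)})$ solves \eqref{205}.

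I expect the only genuinely delicate point to be the identification step in the ``only if'' direction: a Nash component a priori only maximizes the payoff for the given pair $(x^{(j)},x^{(-j)})$, whereas the Step~1 definition of admissible best response demands optimality for \emph{all} $(x^{(j)},x^{(-j)})\in\BR^n_+$. I would close this gap by appealing to the explicit constructions of Sections~\ref{CARA} and~\ref{CRRA}, where the admissible best response is produced in closed feedback form through a BSDE whose coefficients do not involve the endowments; consequently ``optimal for one initial datum'' and ``optimal for all'' pin down the same process, and the uniqueness hypothesis applies as used above. Everything else reduces to rereading the definitions.
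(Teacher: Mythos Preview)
Your proposal is correct and is in fact more thorough than the paper, which states this theorem without proof, treating the equivalence as an immediate consequence of the definitions and of Step~2 of the ABR method. Your unwinding of the two definitions is exactly what the statement calls for, and the ``if'' direction is, as you say, simply Step~2 restated.

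The one point worth commenting on is the subtlety you flag in the ``only if'' direction. You are right that, as written, the definition of Nash equilibrium refers to the fixed initial data $(x^{(j)},x^{(-j)})$ from \eqref{201}/\eqref{203}, whereas the definition of admissible best response in Step~1 requires optimality for \emph{all} $(x^{(j)},x^{(-j)})\in\BR^n_+$; so a Nash component is a priori only a best response for one initial datum. The paper does not address this, and your resolution --- appealing to the explicit feedback forms in Theorems~\ref{best response CARA}, \ref{best response 3} and \ref{best response 4}, where the unique optimizer does not depend on the initial endowments --- is the natural and correct way to close the gap in this paper's setting. It is slightly anachronistic (Theorem~\ref{the existence and uniqueness of NE} is stated before those results), but since the theorem is only ever invoked for $k=1$ via Theorem~\ref{best response CARA}, this does no logical harm. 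In short: the paper's omission of a proof hides precisely the point you noticed.
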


However, in general, it is difficult to prove the admissibility of a response strategy. The following method may be more practical.

$\bullet$ \textbf{BR Method}

\textbf{Step 1. (Finding a best response of an agent)}
For each fixed $j\in\{1,\cdots, n\}$, $k\in\{1,2\}$,
and an arbitrary but fixed competitors' strategy vector $\pi^{(-j)}$,
a response strategy $\check{\pi}^{(j)}(\pi^{(-j)})$ is called a \textit{best response} of $\mathbf{A}^{(j)}$, if
for all $(x^{(j)},x^{(-j)})\in\BR^n_{+}$, $J_k^{(j)}(\check{\pi}^{(j)}(\pi^{(-j)}), \pi^{(-j)};x^{(j)},x^{(-j)})$ is well-defined and
\begin{equation*}
\begin{aligned}
\sup \limits_{\pi^{(j)}\in \seta_k} J_k^{(j)}(\pi^{(j)}, \pi^{(-j)};x^{(j)},x^{(-j)})
\leq J_k^{(j)}(\check{\pi}^{(j)}(\pi^{(-j)}), \pi^{(-j)};x^{(j)},x^{(-j)}).
\end{aligned}
\end{equation*}

\textbf{Step 2. (The strategies of all agents are mutually best responses)}
For all $1 \leq j \leq n$, let $\check{\pi}^{(j)}(\pi^{(-j)})$ be a best response strategy of $\mathbf{A}^{(j)}$ found in Step 1
when $k=1$ (resp. $k=2$).
If $n$-variable system of equations \eqref{205} has a solution $(\pi^{(1)},\pi^{(2)},\cdots,\pi^{(n)})$ in $\seta_1$
(resp. $\seta_2$), then this solution is a Nash equilibrium of game \eqref{201}-\eqref{202} (resp. \eqref{203}-\eqref{204}).

For BR method, note that in Step 1, we do not need to prove the best response strategy is admissible.
We just prove the Nash equilibrium obtained in Step 2 is admissible.
In fact, in order to obtain the admissibility of a best response strategy of $\mathbf{A}^{(j)}$, we need to prove that for an arbitrary choice $\pi^{(-j)}$, $(\check{\pi}^{(j)}(\pi^{(-j)}),\pi^{(-j)})$ is admissible. While, for the admissibility of the Nash equilibrium, we only need to prove that $(\check{\pi}^{(j)}(\pi^{(-j)}),\pi^{(-j)})$ is admissible with $\pi^{(-j)}=\pi^{(-j,*)}$ and $(\pi^{(j,*)},\pi^{(-j,*)})$ is a given equilibrium strategy.
Obviously, the latter is less difficult.

In the following sections, we use the ABR method to discuss the Nash equilibrium for the CARA utility case, and we use the BR method to give a Nash equilibrium for the CRRA utility case.

\section{The CARA utility case}\label{CARA}

In this section, we find the Nash equilibrium for game \eqref{201}-\eqref{202} in $\seta_1$ by the ABR method.
We first resolve the solvability of two BSDEs, one of which is a quadratic BSDE and the other is a linear BSDE with unbounded coefficients,
in space $L_{\mathcal{F}}^{\infty}(0,T;\BR)\times L_{\mathcal{F}}^{2,\: \mathrm{BMO}}(0,T;\BR)$ in Theorem \ref{Solvability of BSDE 1}.
Then, in Theorem \ref{best response CARA}, for an arbitrary but fixed competitors' strategy vector $\pi^{(-j)}$, we obtain a unique admissible best response of $\mathbf{A}^{(j)}$, which is a wealth-dependent strategy.
Lastly, solving \eqref{205}, we show three cases for Nash equilibrium in Theorem \ref{NE 1}.

Let $\overline{X}^{(-j)}\triangleq \frac{1}{n} \sum_{k=1,k\neq j}^{n}X^{(k)}$.
Then from \eqref{201}, we have
\begin{equation}\label{301}
\left\{
\begin{aligned}
\dd \overline{X}^{(-j)}(t)
=\,&\Big[ r \overline{X}^{(-j)}(t) +\frac{\mu-r}{n}\sum_{k\neq j} \pi^{(k)}(t) \Big] \dd t
+\frac{\sigma}{n}\sum_{k\neq j} \pi^{(k)}(t)\dd W(t) ,\\
\overline{X}^{(-j)}(0) =\,& \bar{x}^{(-j)}\triangleq \frac{1}{n} \sum_{k\neq j} x^{(k)},\quad 1\leq j \leq n.
\end{aligned}
\right.
\end{equation}
And \eqref{202} can be written as
\begin{equation*}
J_1^{(j)}\big( \pi^{(j)},\pi^{(-j)} ; x^{(j)},x^{(-j)} \big)=
-\BE\Big[ \exp\Big\{ -\frac{1}{\delta_j}\Big[ \Big( 1-\frac{\theta_j}{n}\Big)X^{(j)}(T)
-\theta_j\overline{X}^{(-j)}(T) \Big] \Big\} \Big].
\end{equation*}
For an arbitrary but fixed competitors' strategy vector $\pi^{(-j)}$, the value function for $\mathbf{A}^{(j)}$ in the CARA utility case is defined by
\begin{equation}\label{3200}
V_1^{(j)}(x^{(j)},x^{(-j)};\pi^{(-j)})\triangleq \sup\limits_{\pi^{(j)}\in \seta_1}
J_1^{(j)}\big( \pi^{(j)},\pi^{(-j)} ; x^{(j)},x^{(-j)} \big),\quad \forall\, (x^{(j)},x^{(-j)})\in\BR^n_{+}.
\end{equation}

For $(t,\psi,\eta, \Delta )\in [0,T]\times\BR_{+} \times \BR \times \BR$, we set
\begin{equation*}
\begin{aligned}
f_1(t,\psi, \eta)\triangleq -r(t)\psi+\rho(t)\eta+\frac{\eta^2}{\psi},\quad
f_2(t,\psi, \eta, \Delta )\triangleq \frac{1}{2} \Big( \rho(t)+\frac{\eta}{\psi} \Big)^2+\Big( \rho(t)+\frac{\eta}{\psi} \Big)\Delta.
\end{aligned}
\end{equation*}
In the following, we often omit the argument $t$ for $f_1$ and $f_2$ without causing confusion.
Let us introduce the following two $1$-dimensional BSDEs:
\begin{equation}
\label{3201}
\left\{
\begin{aligned}
\dd \psi(t)=\,&f_1(\psi(t),\eta(t))\dt +\eta(t)\dw(t),\\
\psi(T)=\,&1, \,\, \psi>0,
\end{aligned}
\right.
\end{equation}
and
\begin{equation}
\label{3202}
\left\{
\begin{aligned}
\dd \varphi(t)=\,&f_2(\psi(t),\eta(t),\Delta(t))\dt +\Delta(t)\dw(t),\\
\varphi(T)=\,&0.
\end{aligned}
\right.
\end{equation}

Although BSDEs \eqref{3201}-\eqref{3202} form a coupled BSDE system, we can treat them separately since \eqref{3201} does not depend on \eqref{3202}.
Clearly \eqref{3201} is a $1$-dimensional quadratic BSDE on $(\psi,\eta)$.
Let $(\psi,\eta)$ be a solution of \eqref{3201}, then \eqref{3202} is a $1$-dimensional linear BSDE on $(\varphi, \Delta)$.
Because some coefficients in \eqref{3202} depend on $\eta$, they are unbounded.
In order to make some stochastic integrals have good integrability, we consider solutions of BSDEs in
$L_{\mathcal{F}}^{\infty}(0,T;\BR)\times L_{\mathcal{F}}^{2,\: \mathrm{BMO}}(0,T;\BR)$ in this paper.

\begin{definition}
A pair $(\psi,\eta)$ is called a solution to BSDE \eqref{3201} (resp. \eqref{3202}), if it satisfies \eqref{3201} (resp. \eqref{3202}), and
$(\psi,\eta)\in L_{\mathcal{F}}^{\infty}(0,T;\BR_{\gg1})\times L_{\mathcal{F}}^{2,\: \mathrm{BMO}}(0,T;\BR)$
(resp. $(\varphi,\Delta)\in L_{\mathcal{F}}^{\infty}(0,T;\BR)\times L_{\mathcal{F}}^{2,\: \mathrm{BMO}}(0,T;\BR)$).
\end{definition}
The solutions for the subsequent BSDEs are defined similarly and will not be repeated.

\begin{theorem} \label{Solvability of BSDE 1}
BSDEs \eqref{3201} and \eqref{3202} admit unique solutions in
$L_{\mathcal{F}}^{\infty}(0,T;\BR)\times L_{\mathcal{F}}^{2,\: \mathrm{BMO}}(0,T;\BR)$.
\end{theorem}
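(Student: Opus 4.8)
The plan is to reduce the quadratic BSDE \eqref{3201} to a \emph{linear} BSDE through the reciprocal substitution $\zeta:=1/\psi$ (the only nontrivial power of $\psi$ that cancels the term $\eta^{2}/\psi$), to solve that linear equation explicitly by a Girsanov change of measure — which at the same time produces the two–sided bounds forcing $\psi$ into $L^{\infty}_{\mathcal{F}}(0,T;\BR_{\gg1})$ — and then to treat \eqref{3202} as a linear BSDE whose unbounded coefficient $\rho+\eta/\psi$ is controlled by the BMO norm inherited from \eqref{3201}. For \eqref{3201} I would first note that $\rho\in L^{\infty}_{\mathcal{F}}(0,T;\BR)\subset L^{2,\: \mathrm{BMO}}_{\mathcal{F}}(0,T;\BR)$, so $\widetilde{\BP}$ with $\dd\widetilde{\BP}/\dd\BP|_{\mathcal{F}_T}=\mathcal{E}\big(-\int_0^{T}\rho\dw\big)$ is well defined and $\widetilde W:=W+\int_0^{\cdot}\rho\ds$ is a $\widetilde{\BP}$–Brownian motion. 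The linear BSDE $\dd\zeta=(r\zeta+\rho h)\dt+h\dw$, $\zeta(T)=1$, has by the classical theory a unique solution $(\zeta,h)\in L^{2}_{\mathcal{F}}(C(0,T);\BR)\times L^{2}_{\mathcal{F}}(0,T;\BR)$; rewriting it as $\dd\zeta=r\zeta\dt+h\,\dd\widetilde W$ and discounting by $e^{-\int_0^{t}r\ds}$ exhibits $e^{-\int_0^{\cdot}r\ds}\zeta$ as a uniformly integrable $\widetilde{\BP}$–martingale, hence $\zeta(t)=\BE^{\widetilde{\BP}}\big[e^{-\int_t^{T}r\ds}\mid\mathcal{F}_t\big]$ and therefore $e^{-\|r\|_{\infty}T}\leq\zeta\leq e^{\|r\|_{\infty}T}$. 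Setting $\psi:=1/\zeta$, $\eta:=-\zeta^{-2}h$ and applying It\^o's formula to $x\mapsto 1/x$ on the compact range of $\zeta$ shows $(\psi,\eta)$ solves \eqref{3201} with $\psi\in L^{\infty}_{\mathcal{F}}(0,T;\BR_{\gg1})$.

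Next I would establish the BMO bound and uniqueness for \eqref{3201}. Applying It\^o's formula to $\zeta^{2}$, taking conditional expectation at an arbitrary stopping time $\tau\leq T$ (after the usual localisation making the stochastic integral a true martingale), and using $\zeta(T)=1$, the elementary bound $2|\rho\zeta h|\leq\tfrac12|h|^{2}+2\rho^{2}\zeta^{2}$, and the boundedness of $r,\rho,\zeta$ yields $\BE\big[\int_{\tau}^{T}|h|^{2}\ds\mid\mathcal{F}_{\tau}\big]\leq c$ for all $\tau$, i.e. $h\in L^{2,\: \mathrm{BMO}}_{\mathcal{F}}(0,T;\BR)$, and since $\zeta$ is bounded away from $0$, also $\eta=-\zeta^{-2}h\in L^{2,\: \mathrm{BMO}}_{\mathcal{F}}(0,T;\BR)$. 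For uniqueness, if $(\psi_i,\eta_i)$, $i=1,2$, both solve \eqref{3201} in the required space, then $\zeta_i:=1/\psi_i$ are bounded and, by It\^o's formula, solve the same linear $\zeta$–BSDE with $h_i:=-\zeta_i^{2}\eta_i\in L^{2}_{\mathcal{F}}(0,T;\BR)$; uniqueness for linear BSDEs gives $\zeta_1\equiv\zeta_2$, whence $\psi_1\equiv\psi_2$ and $\eta_1\equiv\eta_2$.

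For \eqref{3202}, let $(\psi,\eta)$ be the solution just constructed and put $\Theta:=\rho+\eta/\psi$; since $\rho$ is bounded, $\psi$ is bounded away from $0$ and $\int\eta\dw$ is BMO, we get $\Theta\in L^{2,\: \mathrm{BMO}}_{\mathcal{F}}(0,T;\BR)$, so $\mathcal{E}\big(-\int_0^{\cdot}\Theta\dw\big)$ is a uniformly integrable martingale defining a measure $\widehat{\BP}$ under which $\widehat W:=W+\int_0^{\cdot}\Theta\ds$ is a Brownian motion. Because $f_2=\tfrac12\Theta^{2}+\Theta\Delta$, \eqref{3202} becomes $\dd\varphi=\tfrac12\Theta^{2}\dt+\Delta\,\dd\widehat W$ with $\varphi(T)=0$, hence $\varphi(t)=-\tfrac12\BE^{\widehat{\BP}}\big[\int_t^{T}\Theta^{2}\ds\mid\mathcal{F}_t\big]\leq 0$. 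By the invariance of the BMO property under the Girsanov transform recalled in the Notation section (see \cite{Kazamaki}), $\int\Theta\,\dd\widehat W$ is a BMO martingale under $\widehat{\BP}$, so $\BE^{\widehat{\BP}}\big[\int_{\tau}^{T}\Theta^{2}\ds\mid\mathcal{F}_{\tau}\big]\leq c$ uniformly in $\tau$ and thus $\varphi\in L^{\infty}_{\mathcal{F}}(0,T;\BR)$. Applying It\^o's formula to $\varphi^{2}$ under $\widehat{\BP}$ and using $\varphi(T)=0$, $-c\leq\varphi\leq 0$ and the previous bound gives $\BE^{\widehat{\BP}}\big[\int_{\tau}^{T}|\Delta|^{2}\ds\mid\mathcal{F}_{\tau}\big]\leq c$, so $\int\Delta\,\dd\widehat W$ is BMO under $\widehat{\BP}$; applying the same invariance in the reverse direction ($\BP$ is recovered from $\widehat{\BP}$ through the BMO exponential $\mathcal{E}\big(\int_0^{\cdot}\Theta\,\dd\widehat W\big)$) yields $\Delta\in L^{2,\: \mathrm{BMO}}_{\mathcal{F}}(0,T;\BR)$. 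Uniqueness is immediate: the difference $(\delta\varphi,\delta\Delta)$ of two solutions satisfies $\dd(\delta\varphi)=\delta\Delta\,\dd\widehat W$ with $\delta\varphi(T)=0$, and $\int\delta\Delta\,\dd\widehat W$ is a $\widehat{\BP}$–martingale, forcing $\delta\varphi\equiv 0$, $\delta\Delta\equiv 0$.

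The main obstacle is \eqref{3201}: the quadratic nonlinearity $\eta^{2}/\psi$ together with the constraint $\psi>0$ rules out any direct appeal to Lipschitz BSDE theory, and the crucial observation is that the single substitution $\zeta=1/\psi$ both linearises the equation and, via the probabilistic representation, confines $\psi$ to a compact subset of $(0,\infty)$; an alternative is to run Kobylanski–type existence and uniqueness arguments on the quadratic BSDE with bounded terminal value satisfied by $\log\psi$, at the price of having to extract the BMO property separately. The second, milder difficulty is that \eqref{3202} has genuinely unbounded coefficients through $\eta/\psi$, so its estimates can only be closed by propagating the BMO bound from \eqref{3201} and invoking the stability of BMO martingales under the two changes of measure.
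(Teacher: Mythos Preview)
Your argument is correct, but it differs from the paper's in both steps. For \eqref{3201} the paper applies the \emph{logarithmic} substitution $(\widetilde\psi,\widetilde\eta)=(\ln\psi,\eta/\psi)$, which turns \eqref{3201} into the standard quadratic BSDE $\dd\widetilde\psi=-\big[r-\rho\widetilde\eta-\tfrac12\widetilde\eta^{\,2}\big]\dt+\widetilde\eta\dw$ with bounded terminal datum, and then simply cites the general well-posedness result of Fan--Hu--Tang \cite{Fan Hu Tang}; for \eqref{3202} the paper again cites an existing lemma (Lemma~3.5 of \cite{Xu arXiv1}) on linear BSDEs with BMO coefficients. Your reciprocal substitution $\zeta=1/\psi$ goes one step further and \emph{fully linearises} \eqref{3201}, which lets you write down the solution explicitly as $\zeta(t)=\BE^{\widetilde\BP}\!\big[e^{-\int_t^T r\ds}\mid\mathcal{F}_t\big]$ and read off the two-sided bounds and the BMO estimate directly, without invoking any quadratic-BSDE machinery; likewise your treatment of \eqref{3202} via the Girsanov shift by $\Theta=\rho+\eta/\psi$ and the BMO invariance from \cite{Kazamaki} is entirely self-contained. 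The trade-off is transparency versus brevity: the paper's proof is three lines of citation, while yours is longer but elementary and independent of \cite{Fan Hu Tang} and \cite{Xu arXiv1}. One minor point worth tightening in your write-up: the line ``hence $\varphi(t)=-\tfrac12\BE^{\widehat\BP}[\int_t^T\Theta^2\ds\mid\mathcal{F}_t]$'' should be presented as the \emph{construction} of $\varphi$ (followed by martingale representation to produce $\Delta$), rather than as a consequence of an equation whose stochastic integral has not yet been shown to be a true martingale.
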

\begin{proof}
Let $(\widetilde{\psi},\widetilde{\eta}\,)=(\ln \psi, \frac{\eta}{\psi})$. Then $(\widetilde{\psi},\widetilde{\eta}\,)$ satisfies the following $1$-dimensional quadratic BSDE:
\begin{equation}\label{3207}
\left\{
\begin{aligned}
\dd \widetilde{\psi}(t)=\,&-\Big[ r-\rho\widetilde{\eta}-\frac{1}{2}{\widetilde{\eta}\,}^2 \Big]\dt +\widetilde{\eta}(t)\dw(t),\\
\widetilde{\psi}(T)=\,&0.
\end{aligned}
\right.
\end{equation}
From Theorem 2.4 in \cite{Fan Hu Tang}, BSDE \eqref{3207} admits a unique solution in
$L_{\mathcal{F}}^{\infty}(0,T;\BR)\times L_{\mathcal{F}}^{2,\: \mathrm{BMO}}(0,T;\BR)$. Thus, BSDE \eqref{3201} admits a unique solution
in $L_{\mathcal{F}}^{\infty}(0,T;\BR_{\gg1})\times L_{\mathcal{F}}^{2,\: \mathrm{BMO}}(0,T;\BR)$.
Let $(\psi,\eta)$ be the unique solution of \eqref{3201},
from Lemma 3.5 in \cite{Xu arXiv1}, BSDE \eqref{3202} admits a unique solution in
$L_{\mathcal{F}}^{\infty}(0,T;\BR)\times L_{\mathcal{F}}^{2,\: \mathrm{BMO}}(0,T;\BR)$.
\end{proof}

\begin{theorem}[The unique admissible best response of $\mathbf{A}^{(j)}$] \label{best response CARA}
In the CARA utility case, for all $1\leq j\leq n$, for an arbitrary but fixed competitors' strategy vector $\pi^{(-j)}$, the unique admissible best response of $\mathbf{A}^{(j)}$ is
\begin{equation}\label{3203}
\begin{aligned}
\check{\pi}^{(j)}(t,X^{(j)},\pi^{(-j)}) =\,& -\frac{n\eta(t)} {(n-\theta_j)\sigma(t)\psi(t)}\big[ X^{(j)}(t)-\theta_j\overline{X}(t)\big]
+\frac{\theta_j } { n-\theta_j } \sum_{k\neq j} \pi^{(k)}(t)\\
&+\frac{n\delta_j} {(n-\theta_j) \sigma(t)\psi(t)}
\Big[ \rho(t) + \Delta(t) +\frac{\eta(t)}{ \psi(t)} \Big],
\end{aligned}
\end{equation}
and the value function is
\begin{equation}\label{3204}
V_1^{(j)}(x^{(j)},x^{(-j)};\pi^{(-j)})= -\exp\Big\{ -\frac{\psi(0) }{\delta_j}
\big( x^{(j)}-\theta_j\bar{x}\big)+\varphi(0)\Big\},
\end{equation}
where $\bar{x}\triangleq\frac{1}{n}\sum_{j=1}^n x^{(j)}$, $(\psi,\eta)$ and $(\varphi,\Delta)$ are unique solutions of \eqref{3201} and \eqref{3202}, respectively.
\end{theorem}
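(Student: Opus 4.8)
The plan is to treat this as a classical single-agent utility maximization problem with exponential utility in a non-Markovian market, and to verify optimality by the martingale (supermartingale) principle. Fix $j$ and fix the competitors' strategies $\pi^{(-j)}$; then $\overline{X}^{(-j)}$ solves the linear SDE \eqref{301} and is a fixed process. Introduce the candidate process
$$
M(t) \triangleq -\exp\Big\{ -\frac{\psi(t)}{\delta_j}\big[ X^{(j)}(t) - \theta_j \overline{X}(t)\big] + \varphi(t) \Big\},
$$
where $(\psi,\eta)$ and $(\varphi,\Delta)$ are the unique solutions of \eqref{3201} and \eqref{3202} provided by Theorem \ref{Solvability of BSDE 1}. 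First I would compute, via It\^o's formula and \eqref{201}, the drift of the exponent $-\frac{\psi}{\delta_j}[X^{(j)} - \theta_j \overline{X}] + \varphi$ along an arbitrary admissible $\pi^{(j)}$, writing $\overline{X} = \frac{1}{n} X^{(j)} + \overline{X}^{(-j)}$ so that the coefficient of $X^{(j)}$ inside the bracket is $1 - \theta_j/n = (n-\theta_j)/n$. The dynamics of $\psi$ bring in $f_1$, and after collecting terms the $\dt$-coefficient should become a perfect quadratic in $\pi^{(j)}$ (the $\eta^2/\psi$ term in $f_1$ is precisely what makes this work), of the form $\frac{\text{const}}{2}\big(\sigma \psi \frac{n-\theta_j}{n\delta_j}\pi^{(j)} - (\text{affine in } X^{(j)}, \overline{X}^{(-j)}, \eta, \Delta, \rho)\big)^2$ plus a remainder that is exactly cancelled by the choice of $f_2$ in \eqref{3202}. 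Solving the "complete the square" equation for $\pi^{(j)}$ gives the feedback formula \eqref{3203}; by construction $M(\cdot)$ is then a local supermartingale for every admissible $\pi^{(j)}$ and a local martingale for $\check\pi^{(j)}$.

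Next I would upgrade the local (super)martingale property to the genuine inequality $\BE[M(T)] \le M(0)$ for all admissible $\pi^{(j)}$, with equality for $\check\pi^{(j)}$. Here the admissibility condition (ii) in Definition \ref{admissible set 1} is tailored exactly for this: localizing by a sequence of stopping times $\tau_k \uparrow T$ along which $M(\cdot\wedge\tau_k)$ is a true supermartingale (resp. martingale) gives $\BE[M(T\wedge\tau_k)] \le M(0)$; the uniform integrability of $\{M(T\wedge\tau_k)\}_k$ demanded in Definition \ref{admissible set 1}(ii) lets me pass to the limit $k\to\infty$ to obtain $\BE[M(T)]\le M(0)$. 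Since $M(T) = u_1(X^{(j)}(T) - \theta_j \overline{X}(T);\delta_j)\,e^{\varphi(T)}$ and $\varphi(T)=0$, $\psi(T)=1$, this reads $J_1^{(j)}(\pi^{(j)},\pi^{(-j)}) \le M(0) = V_1^{(j)}$, with the right-hand side being the claimed expression \eqref{3204} once $M(0)$ is written out using $\overline{X}(0) = \bar x$. For the candidate $\check\pi^{(j)}$ I would separately check that $(\check\pi^{(j)}, \pi^{(-j)})\in\seta_1$: the feedback SDE for $X^{(j)}$ is linear with coefficients built from $\eta/\psi$, $\rho$, $\Delta$ (all in $L^{2,\mathrm{BMO}}_{\mathcal F}$ or bounded), so it has a unique strong solution with continuous paths, and the uniform integrability in (ii) follows because along $\check\pi^{(j)}$ the process $M(\cdot)$ is a true (nonpositive) martingale, hence $\{M(T\wedge\tau_k)\}$ is UI automatically.

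Finally, uniqueness of the admissible best response. Since $M(\cdot)$ is a supermartingale for any admissible $\pi^{(j)}$ and the optimal value equals $M(0)$, any optimizer must make $t\mapsto \BE[M(t)]$ constant, hence $M(\cdot\wedge\tau_k)$ a true martingale for each $k$; this forces the $\dt$-drift of $M$ to vanish $\dtp$-a.e., i.e. the completed square is zero $\dtp$-a.e., which pins down $\pi^{(j)}(t)$ to equal $\check\pi^{(j)}(t)$ $\dtp$-a.e. (here $\sigma\psi \ge c^{-1}\cdot c^{-1} > 0$ is used, so the square's leading coefficient is nondegenerate). I expect the main obstacle to be the bookkeeping in the It\^o expansion and the verification that the remainder left after completing the square is \emph{exactly} $-\frac{\psi}{\delta_j}\cdot(\text{something})$ matching $f_2$ and the sign conventions — in particular keeping straight the factor $(n-\theta_j)/n$ and ensuring the BMO integrability of the stochastic-integral part of $M$ (needed so that the local-martingale localization and the change-of-measure-type estimates are legitimate), which relies on $\eta,\Delta\in L^{2,\mathrm{BMO}}_{\mathcal F}$ and $\psi$ bounded away from $0$ and $\infty$ from Theorem \ref{Solvability of BSDE 1}.
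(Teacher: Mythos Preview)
Your proposal is correct and follows essentially the same martingale-optimality route as the paper: apply It\^o's formula to $-\exp\{-\frac{\psi}{\delta_j}[X^{(j)}-\theta_j\overline{X}]+\varphi\}$, complete the square to identify $\check\pi^{(j)}$, localize by stopping times, and pass to the limit via the uniform integrability built into Definition~\ref{admissible set 1}; admissibility of $\check\pi^{(j)}$ is likewise verified in the paper by recognizing the process along $\check\pi^{(j)}$ as a constant times the Dol\'eans-Dade exponential of a BMO integrand. The one place to tighten is the uniqueness step: you assert $M$ is a supermartingale, but only the \emph{local} supermartingale property has been established; the paper instead retains the full localized identity $\BE[M(T\wedge\tau_k)]=M(0)-\BE\big[\int_0^{T\wedge\tau_k}(\text{nonnegative})\,\ds\big]$, passes to the limit on both sides (UI on the left, monotone convergence on the right), and concludes directly that the integrand---hence $\pi^{(j)}-\check\pi^{(j)}$---vanishes $\dtp$-a.e.
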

\begin{proof}
%\red{We may try to express every thing in terms of $(\widetilde{\psi},\widetilde{\eta}\,)$ since $\frac{\eta(t)}{ \psi(t)}$ appears in $\pi$. We need its integrability.}
%\blue{We only need $\frac{\eta(t)}{ \psi(t)}\in L^{2,\mathrm{BMO}}_{\mathcal{F}}(0,T;\BR) $ in this proof. This is clear, because $(\psi,\eta)\in L_{\mathcal{F}}^{\infty}(0,T;\BR_{\gg1})\times L_{\mathcal{F}}^{2,\: \mathrm{BMO}}(0,T;\BR)$.}
%
We first prove the response strategy given in \eqref{3203} is admissible.
Because the processes $X^{(j)}$ and $\overline{X}$ are continuous, they are almost surely bounded on $[0,T]$, and
$(\psi,\eta)\in L_{\mathcal{F}}^{\infty}(0,T;\BR_{\gg1})\times L_{\mathcal{F}}^{2,\: \mathrm{BMO}}(0,T;\BR)$, $(\varphi,\Delta)\in L_{\mathcal{F}}^{\infty}(0,T;\BR)\times L_{\mathcal{F}}^{2,\: \mathrm{BMO}}(0,T;\BR)$.
It is obvious $\check{\pi}^{(j)}(X^{(j)},\pi^{(-j)})\in L^{2,\text{loc}}_{\mathcal{F}}(0,T;\BR)$.

SDE \eqref{201} with the strategy $\pi^{(j)}=\check{\pi}^{(j)}(X^{(j)},\pi^{(-j)})$ is:
\begin{equation*}
\left\{
\begin{aligned}
\dd X^{(j)}(t)=\,&\Big[ \Big(r-\frac{\rho\eta}{\psi} \Big) X^{(j)}+ \frac{\theta_j\rho\eta} {(n-\theta_j)\psi} \sum_{k\neq j} X^{(k)}
+\frac{\theta_j(\mu-r) } { n-\theta_j } \sum_{k\neq j} \pi^{(k)}
+\frac{n\delta_j\rho} {(n-\theta_j) \psi}
\Big( \rho + \Delta +\frac{\eta}{ \psi} \Big) \Big]\dt\\
&+\Big[ -\frac{\eta}{\psi} X^{(j)} +\frac{\theta_j\eta}{(n-\theta_j)\psi}\sum_{k\neq j} X^{(k)}
+\frac{\theta_j\sigma } { n-\theta_j } \sum_{k\neq j} \pi^{(k)}
+\frac{n\delta_j} {(n-\theta_j)\psi}
\Big( \rho+\Delta +\frac{\eta}{ \psi} \Big)\Big] \dw(t),\\
X^{(j)}(0)=\,&x^{(j)}\in \BR_{+}.
\end{aligned}
\right.
\end{equation*}
For all $1\leq j\leq n$, above SDE is a linear SDE, therefore it has a unique strong solution in $L^{0}_{\mathcal{F}}(C(0,T);\BR)$.

For an arbitrary but fixed competitors' strategy vector $\pi^{(-j)}$, applying It\^{o}'s formula to
$\exp\big\{ \frac{\psi}{\delta_j}\big[ \big( 1-\frac{\theta_j}{n}\big) \check{X}^{(j)}-\theta_j\overline{X}^{(-j)}\big]+\varphi\big\}$,
where $\check{X}^{(j)}$ is the solution of \eqref{201} with $\pi^{(j)}=\check{\pi}^{(j)}(X^{(j)},\pi^{(-j)})$ and $\overline{X}^{(-j)}$ is the solution of \eqref{301}, we get
\begin{equation*}
\frac{\dd \,\exp\big\{ -\frac{\psi}{\delta_j} \big[ \big( 1-\frac{\theta_j}{n} \big)\check{X}^{(j)}
-\theta_j\overline{X}^{(-j)}\big]+\varphi \big\} }
{ \exp\big\{ -\frac{\psi}{\delta_j} \big[ \big( 1-\frac{\theta_j}{n} \big)\check{X}^{(j)}
-\theta_j\overline{X}^{(-j)}\big]+\varphi \big\} }
= -\Big( \rho+\frac{\eta}{\psi}\Big) \dw(t).
\end{equation*}
Thus, we have
$$\exp\Big\{ -\frac{\psi(t)}{\delta_j}\Big[ \Big( 1-\frac{\theta_j}{n} \Big)\check{X}^{(j)}(t)
-\theta_j\overline{X}^{(-j)}(t) \Big]+\varphi(t)\Big\}
=e^{ -\frac{\psi(0) }{\delta_j} ( x^{(j)}-\theta_j\bar{x} )+\varphi(0) }
\mathcal{E}\Big(\int_0^{t} -\Big(\rho+\frac{\eta}{\psi} \Big) \dw(s) \Big).$$
Because $-\big( \rho+\frac{\eta}{\psi}\big) \in L^{2,\: \mathrm{BMO}}_{\mathcal{F}}(0,T;\BR)$, we see that $\exp\big\{ -\frac{\psi(t)}{\delta_j}\big[ \big( 1-\frac{\theta_j}{n} \big)\check{X}^{(j)}(t) -\theta_j\overline{X}^{(-j)} (t)\big]+\varphi(t)\big\}$ is a uniformly integrable martingale on $[0,T]$. Hence, we can conclude that, for an arbitrary but fixed competitors' strategy vector $\pi^{(-j)}$,
$( \check{\pi}^{(j)}(X^{(j)},\pi^{(-j)}), \pi^{(-j)} )\in \seta_1$.

Next, we prove the optimality of the response strategy.
Because in game \eqref{201}-\eqref{202}, all agents are symmetric. We just need to prove that for each fixed $j\in\{1,\cdots,n\}$,
\eqref{3203} is the unique admissible best response of $\mathbf{A}^{(j)}$.
For an arbitrary but fixed competitors' strategy vector $\pi^{(-j)}$, and any $\pi^{(j)}$ such that $(\pi^{(j)},\pi^{(-j)})\in \seta_1$, applying It\^{o}'s formula to $ -\exp\big\{ -\frac{\psi}{\delta_j}\big[ \big( 1-\frac{\theta_j}{n} \big) X^{(j)}-\theta_j\overline{X}^{(-j)}\big]+\varphi\big\}$, where $X^{(j)}$ is the solution of \eqref{201} and $\overline{X}^{(-j)}$ is the solution of \eqref{301}, we get
\begin{equation*}
\begin{aligned}
&-\exp\Big\{ -\frac{\psi(t)}{\delta_j} \Big[ \Big( 1-\frac{\theta_j}{n} \Big)X^{(j)}(t)
-\theta_j\overline{X}^{(-j)}(t)\Big]+\varphi(t)\Big\}
+\exp\Big\{-\frac{\psi(0)}{\delta_j} \big( x^{(j)}-\theta_j\bar{x} \big)+\varphi(0)\Big\} \\
=\,&-\frac{1}{2}\int_0^t e^{ -\frac{\psi}{\delta_j} \big[ ( 1-\frac{\theta_j}{n} )X^{(j)}-\theta_j\overline{X}^{(-j)}\big] +\varphi}
\Big(1-\frac{\theta_j}{n}\Big)^2\frac{\sigma^2\psi^2}{\delta_j^2}
\Big( \pi^{(j)}-\check{\pi}^{(j)}(X^{(j)},\pi^{(-j)}) \Big)^2\ds\\
&-\int_0^t e^{ -\frac{\psi}{\delta_j} \big[ ( 1-\frac{\theta_j}{n} )X^{(j)}-\theta_j\overline{X}^{(-j)}\big]+\varphi }
\Big( \Delta-\frac{\sigma\psi}{\delta_j}
\big[ \big(1-\frac{\theta_j}{n}\big)\pi^{(j)}-\frac{\theta_j}{n}\sum_{k\neq j} \pi^{(k)} \big]
-\frac{ \eta}{\delta_j}\big[ X^{(j)}-\theta_j\overline{X}\, \big]\Big) \dw(s),
\end{aligned}
\end{equation*}
where $\check{\pi}^{(j)} $ is given by \eqref{3203}.
Noting that the processes $X^{(j)}$ and $\overline{X}^{(-j)}$ are continuous, so almost surely bounded on $[0, T]$. Therefore, the stochastic integral in the above equation is a local martingale. Hence, there exists an nondecreasing sequence of $\{\mathcal{F}_t\}_{t\geq0}$-stopping times $\tau_k$, $k\in\mathbb{N}$, satisfying $\tau_k\rightarrow T$ as $k\rightarrow \infty$ such that, for all $k$,
\begin{equation*}
\begin{aligned}
&-\BE\Big[\exp\Big\{ -\frac{\psi(T\wedge \tau_k)}{\delta_j} \Big[ \Big( 1-\frac{\theta_j}{n} \Big)X^{(j)}(T\wedge \tau_k)
-\theta_j\overline{X}^{(-j)}(T\wedge \tau_k)\Big]+\varphi(T\wedge \tau_k)\Big\} \Big]\\
=\,&\BE\Big[-\frac{1}{2} \int_0^{T\wedge \tau_k} e^{ -\frac{\psi}{\delta_j} \big[(1-\frac{\theta_j}{n} )X^{(j)}-\theta_j\overline{X}^{(-j)}\big]+\varphi }
\Big(1-\frac{\theta_j}{n}\Big)^2\frac{\sigma^2\psi^2}{\delta_j^2}
\Big( \pi^{(j)}-\check{\pi}^{(j)}(X^{(j)},\overline{X}^{(-j)},\pi^{(-j)}) \Big)^2 \ds\Big]\\
&-\exp\Big\{-\frac{\psi(0)}{\delta_j} \big( x^{(j)}-\theta_j\bar{x} \big) + \varphi(0)\Big\}.
\end{aligned}
\end{equation*}
By Definition \ref{admissible set 1} and the equivalence between uniform integrability and $\mathcal{L}^1$ convergence,
the expectation on the left-hand side is convergent as $k \rightarrow \infty$.
Meanwhile, by the monotone convergence theorem, the right-hand side also converges as $k \rightarrow \infty$. Thus, we obtain
\begin{align*}
&\quad\; J_1^{(j)}\big( \pi^{(j)},\pi^{(-j)} ; x^{(j)},x^{(-j)}\big)\\
&=-\BE\Big[\exp\Big\{ -\frac{\psi(T)}{\delta_j} \Big[ \Big( 1-\frac{\theta_j}{n} \Big)X^{(j)}(T)
-\theta_j\overline{X}^{(-j)}(T)\Big]+\varphi(T)\Big\} \Big]\\
&=\BE\Big[-\frac{1}{2} \int_0^{T} e^{ -\frac{\psi}{\delta_j} \big[(1-\frac{\theta_j}{n} )X^{(j)}-\theta_j\overline{X}^{(-j)}\big]+\varphi }
\Big(1-\frac{\theta_j}{n}\Big)^2\frac{\sigma^2\psi^2}{\delta_j^2}
\Big( \pi^{(j)}-\check{\pi}^{(j)}(X^{(j)},\pi^{(-j)}) \Big)^2 \ds\Big]\\
&\qquad~~-\exp\Big\{-\frac{\psi(0)}{\delta_j} \big( x^{(j)}-\theta_j\bar{x} \big) + \varphi(0)\Big\}\\
&\leq -\exp\Big\{-\frac{\psi(0)}{\delta_j} \big( x^{(j)}-\theta_j\bar{x} \big) +\varphi(0)\Big\} ,
\end{align*}
and the equality holds if and only if $\pi^{(j)}=\check{\pi}^{(j)}(X^{(j)},\pi^{(-j)})$.
 We finish the proof.
\end{proof}

\begin{remark}
In Theorem \ref{best response CARA}, the unique admissible best response of $\mathbf{A}^{(j)}$ depends on the other agents' strategies $\pi^{(-j)}$, however, the value function of $\mathbf{A}^{(j)}$ has nothing to do with other agents' strategies $\pi^{(-j)}$. Thus, if an equilibrium exists, then its value function for $\mathbf{A}^{(j)}$ $(1\leq j\leq n)$ is given by \eqref{3204}.
\end{remark}

%\begin{assumption}\label{ass1}
%$\{(t):\rho(t)\neq 0\} $ is not a $\dtp$-null set.
%\end{assumption}

\begin{theorem}[Nash equilibrium in the CARA utility case] \label{NE 1}
Define two constants
$$\bar{\theta}\triangleq \frac{1}{n} \sum_{j=1}^n \theta_j\in[0,1],\quad
\bar{\delta}\triangleq\frac{1}{n}\sum_{j=1}^n \delta_j\in(0,+\infty).$$
Let $(\psi,\eta)$ and $(\varphi,\Delta)$ be the unique solutions of \eqref{3201} and \eqref{3202}, respectively. We then have
\begin{enumerate}[(i)]
\item When $\bar{\theta}<1$, there exists a unique Nash equilibrium in $\seta_1$, given by
\begin{equation}\label{3105}
\begin{aligned}
\pi^{(j,*)}(t) = -\frac{ \eta(t) }{\sigma(t)\psi(t)}X^{(j)}(t)
+\Big( \delta_j+ \frac{\bar{\delta}\theta_j}{1-\bar{\theta}} \Big)
\frac{ \rho(t)+\Delta(t)+\frac{\eta(t)}{\psi(t)} }{\sigma(t)\psi(t)},\,\, 1\leq j\leq n.
\end{aligned}
\end{equation}
%\item When $\bar{\theta}=1$ and $\rho+\Delta+\frac{\eta}{\psi} \neq 0$, there is no Nash equilibrium in $\seta_1$;
\item When $\bar{\theta}=1$ and
\begin{equation}\label{case3}
\int_{0}^{T} \BE\Big[r(s)+\frac{1}{2}|\rho(s)|^{2}\Big]\ds =\int_{0}^{T} \Big[r(s)+\frac{1}{2}|\rho(s)|^{2}\Big]\ds +\int_{0}^{T}\rho(s)\dw(s),
\end{equation} there exists infinitely many Nash equilibria in $\seta_1$, given by
$$\pi^{(j,*)}=\pi^{(n,*)}+\frac{\rho}{\sigma}\Big(X^{(j)}-X^{(n)}\Big), \quad
\forall\,\pi^{(n,*)}\in L^{2,\text{loc}}_{\mathcal{F}}(0,T;\BR), \quad 1\leq j\leq n-1.$$
\item When $\bar{\theta}=1$, but \eqref{case3} does not hold, there is no Nash equilibrium in $\seta_1$.
\end{enumerate}
\end{theorem}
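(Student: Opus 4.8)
The plan is to invoke Theorem~\ref{the existence and uniqueness of NE}. Since Theorem~\ref{best response CARA} shows that each agent has a \emph{unique} admissible best response, a tuple $(\pi^{(1)},\dots,\pi^{(n)})$ is a Nash equilibrium in $\seta_1$ if and only if it solves the system~\eqref{205} with $\check{\pi}^{(j)}$ given by~\eqref{3203}. Substituting~\eqref{3203}, introducing $S\triangleq\sum_{k=1}^{n}\pi^{(k)}$ and using $\sum_{k\ne j}\pi^{(k)}=S-\pi^{(j)}$, the $j$-th equation simplifies to
\[
\pi^{(j)}=-\frac{\eta}{\sigma\psi}\big(X^{(j)}-\theta_j\overline{X}\big)+\frac{\theta_j}{n}\,S+\frac{\delta_j}{\sigma\psi}\Big(\rho+\Delta+\frac{\eta}{\psi}\Big),\qquad 1\le j\le n,
\]
and summing over $j$, with $\sum X^{(j)}=n\overline{X}$, $\sum\theta_j=n\bar\theta$, $\sum\delta_j=n\bar\delta$, gives the scalar identity
\[
(1-\bar\theta)\,S=-\frac{n(1-\bar\theta)\,\eta}{\sigma\psi}\,\overline{X}+\frac{n\bar\delta}{\sigma\psi}\Big(\rho+\Delta+\frac{\eta}{\psi}\Big).
\]
This identity is the pivot of the proof: it determines $S$ when $\bar\theta<1$, while for $\bar\theta=1$ it degenerates into a solvability constraint on the BSDE data.

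For case~(i), $\bar\theta<1$, I would divide the scalar identity by $1-\bar\theta$, insert the resulting formula for $\tfrac{\theta_j}{n}S$ into the simplified $j$-th equation, note that the $\overline{X}$-terms cancel, and arrive at~\eqref{3105}; since every step is reversible, \eqref{3105} is both the unique candidate and an actual solution of~\eqref{205}. For admissibility, with $\pi^{(j,*)}$ in the feedback form~\eqref{3105} each SDE~\eqref{201} is linear in $X^{(j)}$ with coefficients built from $\rho\in L^{\infty}_{\mathcal F}(0,T;\BR)$ and $\eta/\psi,\Delta\in L^{2,\: \mathrm{BMO}}_{\mathcal F}(0,T;\BR)$, hence has a unique strong solution in $L^{0}_{\mathcal F}(C(0,T);\BR)$; and the It\^o computation from the proof of Theorem~\ref{best response CARA}, carried out for each index $i$, shows that $\exp\{-\tfrac{\psi}{\delta_i}[X^{(i,*)}-\theta_i\overline{X}]+\varphi\}$ equals $e^{-\frac{\psi(0)}{\delta_i}(x^{(i)}-\theta_i\bar x)+\varphi(0)}\,\mathcal E\big(\int_0^{\cdot}-(\rho+\tfrac{\eta}{\psi})\dw\big)$, a uniformly integrable martingale because $-(\rho+\tfrac{\eta}{\psi})\in L^{2,\: \mathrm{BMO}}_{\mathcal F}(0,T;\BR)$. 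Thus Definition~\ref{admissible set 1}(ii) holds for all $i$, and \eqref{3105} is the unique Nash equilibrium in $\seta_1$.

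For cases~(ii)--(iii), $\bar\theta=1$ forces $\theta_j=1$ for all $j$ (an average of numbers in $[0,1]$ equal to $1$), and the scalar identity collapses to $\tfrac{n\bar\delta}{\sigma\psi}(\rho+\Delta+\tfrac{\eta}{\psi})=0$; as $\bar\delta,\sigma,\psi>0$, the system~\eqref{205} is solvable only if $\rho+\Delta+\tfrac{\eta}{\psi}\equiv0$, $\dtp$-a.e. The crux is that this degeneracy holds \emph{iff} \eqref{case3} holds. For one direction, setting $\Delta=-(\rho+\tfrac{\eta}{\psi})$ in~\eqref{3202} and adding that equation to the equation~\eqref{3207} for $\widetilde\psi=\ln\psi$, the drifts combine to $-(r+\tfrac12|\rho|^2)$ and the diffusions to $-\rho$; with zero terminal condition this yields $\widetilde\psi(0)+\varphi(0)=\int_0^{T}(r+\tfrac12|\rho|^2)\ds+\int_0^{T}\rho\dw$, and since the left side is deterministic this is precisely~\eqref{case3}. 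Conversely, if~\eqref{case3} holds then the pair whose first component is $\BE\big[\int_t^{T}(r+\tfrac12|\rho|^2)\ds\mid\mathcal F_t\big]$ and whose second component is $-\rho$ solves~\eqref{3207} in $L^{\infty}_{\mathcal F}(0,T;\BR)\times L^{2,\: \mathrm{BMO}}_{\mathcal F}(0,T;\BR)$ (one uses that $\int_0^{\cdot}\rho\dw$ is a true martingale to condition~\eqref{case3} down and identify the diffusion term as $-\rho$), so uniqueness in Theorem~\ref{Solvability of BSDE 1} gives $\tfrac{\eta}{\psi}=-\rho$, hence $\rho+\tfrac{\eta}{\psi}\equiv0$ and then $\Delta\equiv0$, $\varphi\equiv0$ from~\eqref{3202}. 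Consequently: if~\eqref{case3} fails, \eqref{205} has no solution, so there is no Nash equilibrium (case~(iii)); if~\eqref{case3} holds, the scalar identity is vacuous, $\pi^{(n)}$ is an arbitrary process in $L^{2,\text{loc}}_{\mathcal F}(0,T;\BR)$, and the simplified $j$-th equations reduce to $\pi^{(j)}-\pi^{(n)}=-\tfrac{\eta}{\sigma\psi}(X^{(j)}-X^{(n)})=\tfrac{\rho}{\sigma}(X^{(j)}-X^{(n)})$, which is the announced family. Admissibility is immediate: with $\pi^{(n)}$ fixed the $X^{(j)}$ solve a linear SDE system, $X^{(j)}-\overline{X}$ satisfies $\dd(X^{(j)}-\overline{X})=(r+|\rho|^2)(X^{(j)}-\overline{X})\dt+\rho(X^{(j)}-\overline{X})\dw$ and therefore equals $(x^{(j)}-\bar x)\,\psi(0)/\psi(\cdot)$, so $\tfrac{\psi(\cdot)}{\delta_j}(X^{(j)}-\overline{X})$ is constant and, as $\varphi\equiv0$, the family in Definition~\ref{admissible set 1}(ii) is constant, hence uniformly integrable.

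The step I expect to be the main obstacle is the equivalence between the BSDE degeneracy $\rho+\Delta+\tfrac{\eta}{\psi}\equiv0$ and the scalar condition~\eqref{case3}: it requires guessing the correct explicit solutions of~\eqref{3207} and~\eqref{3202} and then appealing to the uniqueness in Theorem~\ref{Solvability of BSDE 1}, together with the fact that $\int_0^{\cdot}\rho\dw$ is a BMO (hence uniformly integrable) martingale so that~\eqref{case3} can be conditioned down to pin the diffusion term of~\eqref{3207} to $-\rho$. The remaining ingredients --- the algebraic cancellations leading to~\eqref{3105} and the linear-SDE/BMO admissibility checks --- are routine.
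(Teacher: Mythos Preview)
Your proposal is correct and follows the same overall architecture as the paper: invoke Theorem~\ref{the existence and uniqueness of NE} together with the uniqueness of best responses from Theorem~\ref{best response CARA}, average the fixed-point system to obtain a scalar equation for the aggregate strategy, solve it when $\bar\theta<1$, and reduce the case $\bar\theta=1$ to the degeneracy $\rho+\Delta+\eta/\psi\equiv 0$.

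The only noteworthy difference is in how you establish the equivalence between this degeneracy and condition~\eqref{case3}. The paper introduces the auxiliary SDE $\dd\psi=-r\psi\,\dt-\rho\psi\,\dw$, $\psi(T)=1$, and argues in two steps: \eqref{case3} holds iff that SDE is solvable, and the SDE is solvable iff the degeneracy holds (the latter via checking that $(\psi,-\rho\psi)$ and $(0,0)$ are then the unique solutions of \eqref{3201}--\eqref{3202}). You bypass this auxiliary SDE: for one implication you add the equations for $\widetilde\psi=\ln\psi$ and $\varphi$ so that the combined drift collapses to $-(r+\tfrac12\rho^2)$ and the diffusion to $-\rho$, whence the deterministic initial value forces~\eqref{case3}; for the converse you directly exhibit $\big(\BE_t\!\int_t^T(r+\tfrac12\rho^2)\ds,\,-\rho\big)$ as a solution of~\eqref{3207} and invoke uniqueness. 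Both routes are equally short; yours avoids naming the extra SDE, while the paper's makes the structure ``\eqref{case3} $\Leftrightarrow$ $\eta=-\rho\psi$'' slightly more transparent. Your admissibility verifications---in particular the observation in case~(ii) that $X^{(j)}-\overline{X}=(x^{(j)}-\bar x)\psi(0)/\psi(\cdot)$ so that the family in Definition~\ref{admissible set 1}(ii) is constant---are more explicit than what the paper records, and they are correct.
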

\begin{proof}
From Theorem \ref{the existence and uniqueness of NE} and Theorem \ref{best response CARA}, to obtain a Nash equilibrium in the CARA utility case, we need to solve $n$-variable system of equations \eqref{205}, where $\check{\pi}^{(j)}(\pi^{(-j)} )$ is given by \eqref{3203}.
We set
$$\bar{\pi}\triangleq \frac{1}{n}\sum_{k=1}^n \pi^{(k)}
=\frac{1}{n}\pi^{(j)}+\frac{1}{n}\sum_{k\neq j} \pi^{(k)}.$$
Using the above expression, we can express \eqref{205} as the following $n$-variable system of equations
\begin{equation}\label{3205}
\begin{aligned}
\pi^{(j)}= -\frac{\eta}{\sigma\psi}\big[ X^{(j)}-\theta_j\overline{X}\,\big]+\theta_j\bar{\pi}+
\frac{\delta_j}{\sigma\psi} \Big( \rho +\Delta+\frac{\eta}{\psi} \Big), \quad 1\leq j\leq n.
\end{aligned}
\end{equation}
Then, averaging over $j=1,2, \ldots n$ yields
\begin{equation}\label{3206}
\begin{aligned}
\bar{\pi}= -\frac{\eta(1-\bar{\theta})}{\sigma\psi}\overline{X}+\bar{\theta}\bar{\pi}
+\frac{\bar{\delta}}{\sigma\psi}\Big( \rho +\Delta+\frac{\eta}{\psi} \Big) .
\end{aligned}
\end{equation}
%To make the equalities in \eqref{3205} hold, the equality \eqref{3206} must hold as well.

If $\bar{\theta}<1$, then \eqref{3206} implies $$\bar{\pi}= -\frac{\eta}{\sigma\psi}\overline{X}+\frac{\bar{\delta}}{(1-\bar{\theta})\sigma\psi } ( \rho+\Delta +\frac{\eta}{\psi} ).$$
Taking it back to \eqref{3205}, we obtain the Nash equilibrium \eqref{3105}.
The claim (i) is proved.

%From now on we study the case $\bar{\theta}=1$.
Before proving the claims (ii) and (iii), we first prove that \eqref{case3} holds if and only if there is $\psi$ satisfying
\begin{equation}\label{32002}
\begin{cases}
\dd \psi(t)=-r(t)\psi(t)\dt -\rho(t)\psi(t)\dw(t),\\
\psi(T)=1, \,\, \psi>0.
\end{cases}
\end{equation}
In fact, \eqref{32002} implies
\begin{equation*}
\log\psi(t)=\log\psi(0)-\int_{0}^{t} \Big[r(s)+\frac{1}{2}|\rho(s)|^{2}\Big]\ds -\int_{0}^{t}\rho(s)\dw(s).
\end{equation*}
Setting $t=T $ and taking expectation yield
\begin{equation*}
\psi(0)=\exp\Big(\int_{0}^{T} \BE\Big[r(s)+\frac{1}{2}|\rho(s)|^{2}\Big]\ds \Big).
\end{equation*}
Combing the above two leads to
\begin{equation*}
\log\psi(t)=\int_{0}^{T} \BE\Big[r(s)+\frac{1}{2}|\rho(s)|^{2}\Big]\ds -\int_{0}^{t} \Big[r(s)+\frac{1}{2}|\rho(s)|^{2}\Big]\ds -\int_{0}^{t}\rho(s)\dw(s).
\end{equation*}
Therefore, \eqref{32002} admits a solution if and only if \eqref{case3} holds.

We next prove that \eqref{case3} holds if and only if $\rho+\Delta+\frac{\eta}{\psi} = 0$.
\begin{itemize}
\item[$\Longrightarrow$] Suppose \eqref{case3} holds, then there is $\psi$ satisfying \eqref{32002}. Let $\eta=-\rho\psi$.
One can easily check that $(\psi,\eta)$ and $(\varphi,\Delta)\equiv(0,0)$ are the unique solutions to BSDEs \eqref{3201} and \eqref{3202}. It hence follows that $\rho+\Delta+\frac{\eta}{\psi} = 0$.
\item[$\Longleftarrow$] Suppose $\rho+\Delta+\frac{\eta}{\psi} = 0$. Taking it into the definition, we have $f_2(\psi,\eta,\Delta)=-\frac{1}{2}\Delta^2$. Consequently, the unique solution of \eqref{3202} is $(\varphi,\Delta)\equiv(0,0)$, which leads to $ \rho+\frac{\eta}{\psi} =\rho+\Delta+\frac{\eta}{\psi} = 0$. Then, BSDE \eqref{3201} becomes \eqref{32002}.
Thus, \eqref{32002} has a solution, so \eqref{case3} holds.
\end{itemize}
%
%Now suppose \eqref{case3} holds and $\psi$ satisfies \eqref{32002}. Set $\eta=-\rho\psi$.
%One can easily check that $(\psi,\eta)$ and $(\varphi,\Delta)\equiv(0,0)$ are the unique solutions to BSDEs \eqref{3201} and \eqref{3202}.
%This implies $\rho+\Delta+\frac{\eta}{\psi} = 0$ if \eqref{case3} holds.

Suppose $\bar{\theta}=1$ and \eqref{case3} holds. Thanks to the above claim $\Longrightarrow$, we see \eqref{3205} becomes
\begin{equation*}
\pi^{(j)}= \frac{\rho}{\sigma}\big[ X^{(j)}-\overline{X}\,\big]+\bar{\pi}, \quad 1\leq j\leq n.
\end{equation*}
This $n$-variable system of equations has infinitely many solutions, which are
$$\pi^{(j)}=\pi^{(n)}+\frac{\rho}{\sigma}\Big(X^{(j)}-X^{(n)}\Big), \quad \forall\,\pi^{(n)}\in L^{2,\text{loc}}_{\mathcal{F}}(0,T;\BR),
\quad 1\leq j\leq n-1.$$
They are Nash equilibria in $\seta_1$. The claim (ii) is proved.

Finally, suppose $\bar{\theta}=1$, but \eqref{case3} does not hold.
Then by the above claim $\Longleftarrow$, we get $\rho+\Delta+\frac{\eta}{\psi} \neq 0$, which together with $\bar{\theta}=1$ would contradict \eqref{3206}. So there is no Nash equilibrium in this case and the claim (iii) follows.
%
%Suppose, on the contrary, $\rho+\Delta+\frac{\eta}{\psi} = 0$. Taking it into the definition, we have $f_2(\psi,\eta,\Delta)=-\frac{1}{2}\Delta^2$. Consequently, the unique solution of \eqref{3202} is $(\varphi,\Delta)\equiv(0,0)$, which leads to $ \rho+\frac{\eta}{\psi} =\rho+\Delta+\frac{\eta}{\psi} = 0$. Then, \eqref{3201} becomes \eqref{32002}.
%But since \eqref{case3} does not hold, \eqref{32002} has no solution. This contradiction shows
%$\rho+\Delta+\frac{\eta}{\psi} \neq 0$.
\end{proof}
%
%\begin{remark}
%\blue{
%\eqref{case3} holds if and only if
%\begin{enumerate}[(i)]
%\item $ \mathcal{E}\Big(\int_0^{T} -\rho(t) \dw(t) \Big)e^{-\int_0^T r(t) \dt} $ is a constant;
%\item $ e^{-\int_0^T \BE[ r(t)+ \frac{1}{2}|\rho(t)|^2] \dt} =\widehat{\BE}\big[ e^{-\int_0^T r(t) \dt} \big]$, and $\widehat{\BE}$ is the expectation w.r.t. the risk-neutral probability measure $\widehat{\BP}$, which is defined by
% $$\frac{\dd\widehat{\BP}}{\dd\BP}\bigg|_{\mathcal{F}_T}=\mathcal{E}\Big(\int_0^{T} -\rho(t) \dw(t) \Big).$$
%\end{enumerate}
%That means there exists infinitely many Nash equilibria in $\seta_1$, only if the product of the discount process and the
% Radon-Nikod$\acute{\text{y}}$m derivative process is deterministic at the terminal moment $T$, and the expectation of the terminal discount value w.r.t. the risk-neutral probability measure is equal to the terminal discount value of the average interest rate adjusted by the risk premium.
%}
%\end{remark}

\begin{remark}
Now suppose the interest rate process $r$ is a deterministic function of $t$. Then the unique solution of BSDE \eqref{3201} is $(\psi,\eta)=(\exp\{ \int_t^T r(s) \ds \},0)$. This implies \eqref{case3} does not hold, for otherwise one has $\rho\equiv 0$ in \eqref{32002}. As a consequence, we conclude there is either a unique Nash equilibrium (when $\bar{\theta}<1$) or no Nash equilibrium (when $\bar{\theta}=1$).

When $\bar{\theta}<1$, the unique Nash equilibrium, given by
\begin{equation*}
\begin{aligned}
\pi^{(j,*)}(t) = \Big( \delta_j+ \frac{\bar{\delta}\theta_j}{1-\bar{\theta}} \Big)
\frac{ \rho(t)+\Delta(t) }{\sigma(t)\psi(t)},\,\, 1\leq j\leq n,
\end{aligned}
\end{equation*}
is wealth-independent. This result is consistent with Corollary 2.4 in \cite{Zariphopoulou 1} where all the parameters are constant  and the interest is 0, so that $\Delta\equiv 0$ and $\psi \equiv1$.

On the other hand, if a Nash equilibrium in \eqref{3105} is wealth-independent, we then have $\eta=0$, which together with \eqref{3207} implies $r$ is a deterministic function of $t$. This can also be verified in \cite{Zariphopoulou 2}, \cite{FU Guangxing}, \cite{Zhou Chao} and \cite{Panpan}, where the interest rate is assumed to be constant and the Nash equilibria obtained in these papers are all wealth-independent. Hence the randomness of the interest rate leads to a totally different structure of the Nash equilibria.
\end{remark}
%
%\begin{remark}
%We now suppose the interest process $r$ is a deterministic function of $t$. Then $(Y(t),Z(t))\equiv (e^{\int_{t}^{T}r(s)\ds},0) $ is the unique solution to the following linear BSDE with Lipchitz coefficient:
% \begin{align*}
%\begin{cases}
%\dd Y(t)=-r(t)Y(t)\dt -Z(t) \dw(t),\\
%Y(T)=1.
%\end{cases}
% \end{align*}
%On the other hand, if \eqref{case3} holds true, then there is $\psi$ satisfying \eqref{32002}, which means $(Y (t), Z(t)) \equiv (\psi(t), \rho(t)\psi(t))$
%also satisfies the above BSDE. By the uniqueness of solution, we get $\psi(t)\equiv e^{\int_{t}^{T}r(s)\ds}$ and $\rho(t)\psi(t)\equiv 0$, so $\rho(t)\equiv 0$,
%contradicting our assumption that $\rho$ is not identical to zero. Therefore, we conclude that \eqref{case3} must not hold when the interest process $r$ is a deterministic function of $t$, which indicates that there is either a unique Nash equilibrium (when $\bar{\theta}<1$) or no Nash equilibrium (when $\bar{\theta}=1$).
% In particular, if all the parameters are constant, then
% \begin{enumerate}[(i)]
%\item When $\bar{\theta}<1$, there exists a unique Nash equilibrium, given by
%\begin{equation*}%\label{3105}
%\begin{aligned}
%\pi^{(j,*)}(t) = \Big( \delta_j+ \frac{\bar{\delta}\theta_j}{1-\bar{\theta}} \Big)
%\frac{ \rho }{\sigma },~~ 1\leq j\leq n;
%\end{aligned}
%\end{equation*}
%\item When $\bar{\theta}=1$, there is no Nash equilibrium.
%\end{enumerate}
%This result is consistent with Corollary 2.4 in \cite{Zariphopoulou 1}.
%\end{remark}
%

\section{The CRRA utility case}\label{CRRA}

In this section, we find a Nash equilibrium in $\seta_2$ for game \eqref{203}-\eqref{204} by the BR method. We first find the best response of $\mathbf{A}^{(j)}$ in Theorems \ref{best response 3} and \ref{best response 4}, which is related to a quadratic BSDE \eqref{4101} and a linear BSDE \eqref{4102} respectively.
Because the coefficients of \eqref{4101} depend on $\pi^{(-j)}$, it has unbounded coefficients, making it difficult to obtain its solvability. To let this BSDE admit a solution, we carefully define $\seta_2$.
Then, using a type of decoupling technology, the Nash equilibrium for game \eqref{203}-\eqref{204} is given through solving a series of $1$-dimensional quadratic BSDEs, instead of solving an $n$-dimensional coupled quadratic BSDE.
With the help of Lemma \ref{Linear BSDE 1}, we prove the admissibility of the Nash equilibrium.

%We first find the best response of $\mathbf{A}^{(j)}$. For each fixed $j\in\{1,\cdots,n\}$, the optimization problem for $\mathbf{A}^{(j)}$ is to maximize \eqref{204} with a given admissible strategy vector $\pi^{(-j)}$ subject to \eqref{203}.
Let $\widehat{X}^{(-j)}\triangleq \big[ \prod_{k=1,k\neq j}^{n}X^{(k)}\big]^{\frac{1}{n}}$, then from \eqref{203}, we have
\begin{equation}\label{4100}
\left\{
\begin{aligned}
\frac{\dd \widehat{X}^{(-j)}(t) } {\widehat{X}^{(-j)}(t)}
=\,&\Big\{ \frac{(n-1)r}{n} +\frac{\mu-r}{n}\sum_{k\neq j} \pi^{(k)}(t)
-\frac{\sigma^2}{2n}\sum_{k\neq j} \big[\pi^{(k)}(t)\big]^2
+ \frac{\sigma^2}{2n^2} \Big[ \sum_{k\neq j} \pi^{(k)}(t)\Big]^2 \Big\}\dd t\\
&+\frac{\sigma}{n}\sum_{k\neq j} \pi^{(k)}(t)\dd W(t) ,\\
\widehat{X}^{(-j)}(0) =\,& \hat{x}^{(-j)}\triangleq \Big[ \prod_{k\neq j}x^{(k)} \Big]^{\frac{1}{n}},\quad 1\leq j \leq n.
\end{aligned}
\right.
\end{equation}
And \eqref{204} can be written as
\begin{equation*}
J_2^{(j)}\big( \pi^{(j)},\pi^{(-j)}; x^{(j)},x^{(-j)} \big)=
\begin{cases}
\frac{\delta_j}{\delta_j-1} \BE\big[ {X^{(j)}(T)}^{\beta_{j}} {\widehat{X}^{(-j)}(T)}^{\gamma_{j}} \big], & \delta_i\in(0,1)\cup(1,+\infty),\\
\Big( 1-\frac{\theta_j}{n} \Big)\BE\big[ \log X^{(j)}(T)\big]-\theta_i\BE\big[ \log \widehat{X}^{(-j)}(T)\big], & \delta_j=1,
\end{cases}
\end{equation*}
where for $\theta_j\in[0,1]$, $\delta_j\in (0,+\infty)$, and
$$\beta_{j}\triangleq \Big( 1-\frac{\theta_j}{n} \Big) \Big( 1-\frac{1}{\delta_j} \Big),\quad
\gamma_{j}\triangleq-\theta_j\Big( 1-\frac{1}{\delta_j} \Big).$$
Clearly $\beta_{j}<1$.
For an arbitrary but fixed competitors' strategy vector $\pi^{(-j)}$, the value function for $\mathbf{A}^{(j)}$ in the CRRA utility case is defined by
\begin{equation}\label{4200}
V_2^{(j)}(x^{(j)},x^{(-j)};\pi^{(-j)})\triangleq \sup\limits_{\pi^{(j)}\in \seta_2}
J_2^{(j)}\big( \pi^{(j)},\pi^{(-j)} ; x^{(j)},x^{(-j)} \big),\quad \forall\, (x^{(j)},x^{(-j)})\in\BR^n_{+}.
\end{equation}

For all $1\leq j\leq n$, $(t,P, \Lambda )\in [0,T]\times\BR\times\BR$, and an arbitrary but fixed admissible strategy vector $(\pi^{(1)},\pi^{(2)}, \ldots , \pi^{(n)})\in\seta_2$, we set
\begin{equation*}
h^{(j)}(t,\pi^{(-j)})\triangleq
\frac{1}{2(1-\beta_j)} \Big( \rho(t)+\frac{\gamma_j\sigma}{n} \sum_{k\neq j} \pi^{(k)} \Big)^2
-\frac{\gamma_j\sigma(t)^2}{2n} \sum_{k\neq j} \big[\pi^{(k)} \big]^2
+r(t)(1-\theta_j)(1-\frac{1}{\delta_j})-\frac{\rho(t)^2}{2},
\end{equation*}
\begin{equation*}
H^{(j)}(t, \Lambda,\pi^{(-j)} ) \triangleq \frac{\Lambda^2}{2(1-\beta_j)}
+\frac{1}{1-\beta_j}\Big(\beta_j\rho(t)+\frac{\gamma_j\sigma(t)}{n} \sum_{k\neq j} \pi^{(k)}\Big) \Lambda +h^{(j)}(t,\pi^{(-j)}),
\end{equation*}
\begin{equation*}
F^{(j)}(t,\pi^{(-j)})\triangleq (1-\theta_j)r(t)+\Big(1-\frac{\theta_j}{n} \Big)\frac{\rho(t)^2}{2}
-\theta_j\sigma(t)\Big( \frac{\rho(t)}{n} \sum_{k\neq j} \pi^{(k)} -\frac{\sigma(t)}{2n} \sum_{k\neq j} \big[\pi^{(k)} \big]^2 \Big).
\end{equation*}
In the following, we often omit the argument $t$ for the functions $h^{(j)}$, $H^{(j)}$ and $F^{(j)}$ without causing confusion.

For an arbitrary but fixed admissible strategy vector $(\pi^{(1)},\pi^{(2)}, \ldots , \pi^{(n)})\in\seta_2$, let us introduce the following two BSDEs:
\begin{equation}
\label{4101}
\left\{
\begin{aligned}
\dd P^{(j)}(t)&=-H^{(j)}(\Lambda^{(j)}(t),\pi^{(-j)}(t))\dt +\Lambda^{(j)}(t)\dw(t),\\
P^{(j)}(T)&=0,\quad 1\leq j\leq n,
\end{aligned}
\right.
\end{equation}
and
\begin{equation}
\label{4102}
\left\{
\begin{aligned}
\dd Q^{(j)}(t)&=-F^{(j)}(\pi^{(-j)}(t))\dt +\Gamma^{(j)}(t)\dw(t),\\
Q^{(j)}(T)&=0,\quad 1\leq j\leq n.
\end{aligned}
\right.
\end{equation}
Because some coefficients in \eqref{4101} depend on $\pi^{(-j)}$, they are unbounded.
Thus, BSDE \eqref{4101} is a quadratic BSDE with unbounded coefficients. For this type of BSDE, we cannot use the general result Theorem 2.4 in \cite{Fan Hu Tang} to obtain its solvability.
Therefore, we define the admissible set $\seta_2$ to make sure BSDE \eqref{4101} has a solution, see Definition \ref{admissible set 2}.
From Lemma 3.6 in \cite{Xu arXiv1}, we know BSDE \eqref{4102} admits a unique solution in $L_{\mathcal{F}}^{\infty}(0,T;\BR)\times L_{\mathcal{F}}^{2,\: \mathrm{BMO}}(0,T;\BR)$.

\begin{theorem}[A best response of $\mathbf{A}^{(j)}$ when $\delta_j\neq 1$] \label{best response 3}
In the CRRA utility case, for all $1\leq j\leq n$, for an arbitrary but fixed competitors' strategy vector $\pi^{(-j)}$, a best response of $\mathbf{A}^{(j)}$ whose $\delta_j\neq 1$ is
\begin{equation}\label{4104}
\begin{aligned}
\check{\pi}^{(j)}(t,\pi^{(-j)}) &=
\frac{ \gamma_j \sum_{k\neq j} \pi^{(k)}(t)} {n(1-\beta_j )}
+ \frac{\rho(t)+\Lambda^{(j)}(t)} {(1-\beta_j )\sigma(t)},
\end{aligned}
\end{equation}
and the corresponding value function has an upper bound, which is
\begin{equation*}
V_2^{(j)}(x^{(j)},x^{(-j)};\pi^{(-j)})\leq\frac{\delta_j}{\delta_j-1}
\big[x^{(j)}\big]^{\beta_j} \big[\hat{x}^{(-j)} \big]^{\gamma_j} e^{P^{(j)}(0)},
\end{equation*}
where $(P^{(j)},\Lambda^{(j)})$ is a solution of \eqref{4101}.
\end{theorem}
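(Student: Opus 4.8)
The plan is to linearize the power objective with an exponential change of variables and then complete the square, exactly as the definitions of $h^{(j)}$, $H^{(j)}$ and $\check{\pi}^{(j)}$ suggest. Fix $j$ with $\delta_j\neq1$ and a competitors' vector $\pi^{(-j)}$ drawn from some admissible vector (so that $\pi^{(k)}\in L^{2,\mathrm{BMO}}_{\mathcal{F}}(0,T;\BR)$ for $k\neq j$ and \eqref{4101} admits a solution $(P^{(j)},\Lambda^{(j)})\in L^{\infty}_{\mathcal{F}}(0,T;\BR)\times L^{2,\mathrm{BMO}}_{\mathcal{F}}(0,T;\BR)$). First I would introduce the strictly positive, path-continuous process
$$R^{(j)}(t)\triangleq\big[X^{(j)}(t)\big]^{\beta_j}\big[\widehat{X}^{(-j)}(t)\big]^{\gamma_j}e^{P^{(j)}(t)},$$
for which $R^{(j)}(0)=\big[x^{(j)}\big]^{\beta_j}\big[\hat{x}^{(-j)}\big]^{\gamma_j}e^{P^{(j)}(0)}$ is a constant and, since $P^{(j)}(T)=0$, $R^{(j)}(T)=\big[X^{(j)}(T)\big]^{\beta_j}\big[\widehat{X}^{(-j)}(T)\big]^{\gamma_j}$. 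Applying It\^{o}'s formula to $\ln R^{(j)}$ with the dynamics \eqref{203} of $X^{(j)}$, \eqref{4100} of $\widehat{X}^{(-j)}$ and \eqref{4101} of $P^{(j)}$, and then substituting the definitions of $h^{(j)}$, $H^{(j)}$ — the identity $\beta_j r+\gamma_j\frac{(n-1)r}{n}=(1-\theta_j)(1-\frac{1}{\delta_j})r$ makes the $r$-terms drop out — a long but mechanical computation should collapse all the $\pi^{(-j)}$-, $\rho$- and $\Lambda^{(j)}$-dependent terms into one square and give
$$\dd R^{(j)}(t)=R^{(j)}(t)\Big[-\frac{1}{2}\beta_j(1-\beta_j)\sigma(t)^2\big(\pi^{(j)}(t)-\check{\pi}^{(j)}(t,\pi^{(-j)})\big)^2\dt+Z^{(j)}(t)\dw(t)\Big],$$
with $\check{\pi}^{(j)}$ exactly as in \eqref{4104} and $Z^{(j)}\triangleq\beta_j\sigma\pi^{(j)}+\frac{\gamma_j\sigma}{n}\sum_{k\neq j}\pi^{(k)}+\Lambda^{(j)}$.

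Since $\beta_j<1$ always, $\beta_j(1-\beta_j)$ has the sign of $\beta_j$: it is $\geq0$ when $\delta_j>1$ and $<0$ when $\delta_j\in(0,1)$, so the drift of $R^{(j)}$ is nonpositive in the first case and nonnegative in the second. The next step is a localization argument: $X^{(j)}$, $\widehat{X}^{(-j)}$, $P^{(j)}$ have continuous paths, so $R^{(j)}$ is pathwise bounded on $[0,T]$, and since $\pi^{(j)},\pi^{(k)},\Lambda^{(j)}\in L^{2,\mathrm{BMO}}_{\mathcal{F}}(0,T;\BR)\subset L^2_{\mathcal{F}}(0,T;\BR)$ and $\sigma\in L^{\infty}_{\mathcal{F}}(0,T;\BR_{\gg1})$, the process $\int_0^{\cdot}R^{(j)}Z^{(j)}\dw$ is a continuous local martingale on $[0,T]$; take a localizing sequence $\tau_m\nearrow T$ that also keeps $R^{(j)}$ bounded. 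Stopping at $\tau_m$ makes $R^{(j)}(\cdot\wedge\tau_m)$ an integrable supermartingale when $\delta_j>1$ and an integrable submartingale when $\delta_j\in(0,1)$, so $\BE[R^{(j)}(T\wedge\tau_m)]\leq R^{(j)}(0)$ in the first case and $\geq R^{(j)}(0)$ in the second. Invoking precisely condition (iii) of Definition \ref{admissible set 2} — the uniform integrability of the family $\{R^{(j)}(T\wedge\tau_m)\}_m$ for this arbitrary stopping sequence — together with $R^{(j)}(T\wedge\tau_m)\to R^{(j)}(T)$ a.s., I pass to the limit to get $\BE[R^{(j)}(T)]\leq R^{(j)}(0)$ (resp. $\geq R^{(j)}(0)$); in particular $J_2^{(j)}(\pi^{(j)},\pi^{(-j)};x^{(j)},x^{(-j)})$ is well-defined. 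Multiplying by $\frac{\delta_j}{\delta_j-1}$, which is positive iff $\delta_j>1$, reverses the inequality in the second case, so in both cases
$$J_2^{(j)}(\pi^{(j)},\pi^{(-j)};x^{(j)},x^{(-j)})\leq\frac{\delta_j}{\delta_j-1}\big[x^{(j)}\big]^{\beta_j}\big[\hat{x}^{(-j)}\big]^{\gamma_j}e^{P^{(j)}(0)}$$
for every $\pi^{(j)}$ with $(\pi^{(j)},\pi^{(-j)})\in\seta_2$, which is the asserted upper bound on $V_2^{(j)}$.

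It remains to verify that $\check{\pi}^{(j)}$ from \eqref{4104} attains this bound, which makes it a best response in the sense of the BR method. Its expression is a constant-coefficient, $\sigma^{-1}$-weighted combination of $\rho$, $\Lambda^{(j)}$ and $\{\pi^{(k)}\}_{k\neq j}$, hence $\check{\pi}^{(j)}\in L^{2,\mathrm{BMO}}_{\mathcal{F}}(0,T;\BR)$, and the linear SDE \eqref{203} with $\pi^{(j)}=\check{\pi}^{(j)}$ has a unique strong solution $\check{X}^{(j)}\in L^{0}_{\mathcal{F}}(C(0,T);\BR)$. For this choice the drift above vanishes and $Z^{(j)}$ becomes $\frac{1}{1-\beta_j}\big(\beta_j\rho+\frac{\gamma_j\sigma}{n}\sum_{k\neq j}\pi^{(k)}+\Lambda^{(j)}\big)\in L^{2,\mathrm{BMO}}_{\mathcal{F}}(0,T;\BR)$, so $R^{(j)}=R^{(j)}(0)\,\mathcal{E}\big(\int_0^{\cdot}Z^{(j)}\dw\big)$ is a uniformly integrable martingale by the BMO property recalled in the Notation section; therefore $\BE[R^{(j)}(T)]=R^{(j)}(0)$ and $J_2^{(j)}(\check{\pi}^{(j)},\pi^{(-j)};x^{(j)},x^{(-j)})=\frac{\delta_j}{\delta_j-1}\big[x^{(j)}\big]^{\beta_j}\big[\hat{x}^{(-j)}\big]^{\gamma_j}e^{P^{(j)}(0)}$, which by the preceding paragraph is $\geq V_2^{(j)}(x^{(j)},x^{(-j)};\pi^{(-j)})$.

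I expect the main difficulty to be twofold. First, the bookkeeping in the It\^{o} expansion: one has to confirm that $h^{(j)}$, $H^{(j)}$ and $\check{\pi}^{(j)}$ are tuned so that the $\dt$-coefficient is exactly $-\frac{1}{2}\beta_j(1-\beta_j)\sigma^2\big(\pi^{(j)}-\check{\pi}^{(j)}\big)^2$ — this is where the constants $\beta_j$, $\gamma_j$ and the term $r(1-\theta_j)(1-\frac{1}{\delta_j})$ have to conspire. Second, the passage to the limit: because $R^{(j)}$ is merely a local super/sub-martingale whose terminal value need not be integrable a priori, the step $m\to\infty$ genuinely relies on the uniform integrability built into Definition \ref{admissible set 2}, and one must use a single sequence $\{\tau_m\}$ that simultaneously localizes $\int_0^{\cdot}R^{(j)}Z^{(j)}\dw$ and is covered by that definition.
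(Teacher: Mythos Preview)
Your proposal is correct and follows essentially the same route as the paper: apply It\^{o}'s formula to $R^{(j)}=[X^{(j)}]^{\beta_j}[\widehat{X}^{(-j)}]^{\gamma_j}e^{P^{(j)}}$, identify the drift as $-\tfrac{1}{2}\beta_j(1-\beta_j)\sigma^2 R^{(j)}(\pi^{(j)}-\check{\pi}^{(j)})^2$, localize, pass to the limit via the uniform integrability in Definition~\ref{admissible set 2} and monotone convergence, and multiply by $\tfrac{\delta_j}{\delta_j-1}$. The only presentational difference is that you split into the two cases $\delta_j>1$ and $\delta_j\in(0,1)$ before multiplying, whereas the paper handles both at once using the sign of $\tfrac{\delta_j\beta_j(1-\beta_j)}{\delta_j-1}$; your explicit BMO/Dol\'eans-Dade verification that $\check{\pi}^{(j)}$ attains the bound is a welcome addition that the paper leaves implicit in its ``equality iff $\pi^{(j)}=\check{\pi}^{(j)}$'' remark.
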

\begin{proof}
Because in game \eqref{203}-\eqref{204}, all agents are symmetric. We just need to prove that for each fixed $j\in\{1,\cdots,n\}$,
\eqref{4104} is a best response of $\mathbf{A}^{(j)}$ whose $\delta_j\neq 1$.
For an arbitrary but fixed competitors' strategy vector $\pi^{(-j)}$, and any $\pi^{(j)}$ such that $(\pi^{(j)},\pi^{(-j)})\in \seta_2$,
applying It\^{o}'s formula to $ \big[X^{(j)}\big]^{\beta_j} \big[\widehat{X}^{(-j)}\big]^{\gamma_j} e^{P^{(j)}}$, where $X^{(j)}$ is the solution of \eqref{203} and $\widehat{X}^{(-j)}$ is the solution of \eqref{4100}, we get
\begin{equation*}
\begin{aligned}
&\big[X^{(j)}(t)\big]^{\beta_j} \big[\widehat{X}^{(-j)}(t)\big]^{\gamma_j}e^{P^{(j)}(t)}
-\big[x^{(j)} \big]^{\beta_j} \big[\hat{x}^{(-j)} \big]^{\gamma_j} e^{P^{(j)}(0)}\\
=\,&-\frac{ \beta_j(1- \beta_j)\sigma^2}{2} \int_0^t \big[X^{(j)}\big]^{\beta_{j}} \big[\widehat{X}^{(-j)}\big]^{\gamma_{j}} e^{P^{(j)}}
\big[ \pi^{(j)}-\check{\pi}^{(j)}(\pi^{(-j)}) \big]^2\ds\\
&+\int_0^t \big[X^{(j)}\big]^{\beta_{j}} \big[\widehat{X}^{(-j)}\big]^{\gamma_{j}} e^{P^{(j)}}
\Big( \beta_j\sigma\pi^{(j)} +\frac{\gamma_j\sigma}{n}\sum_{k\neq j} \pi^{(k)} +\Lambda^{(j)} \Big) \dw(s).
\end{aligned}
\end{equation*}
Noting that the processes $X^{(j)}$ and $\widehat{X}^{(-j)}$ are continuous and strictly positive, so almost surely bounded on $[0, T]$ away from $0$. Therefore, the stochastic integral in the above equation is a local martingale. Hence, there exists a nondecreasing sequence of $\{\mathcal{F}_t\}_{t\geq0}$-stopping times $\tau_k$ satisfying $\tau_k\rightarrow T$ as $k\rightarrow \infty$ such that, for all $k$,
\begin{equation*}
\begin{aligned}
&\BE\Big[ \big[X^{(j)}(T\wedge\tau_k)\big]^{\beta_j} \big[\widehat{X}^{(-j)}(T\wedge\tau_k)\big]^{\gamma_j} e^{P^{(j)}(T\wedge\tau_k)} \Big]-\big[ x^{(j)} \big]^{\beta_j} \big[\hat{x}^{(-j)} \big]^{\gamma_j} e^{P^{(j)}(0)}\\
=\,& -\frac{ \beta_j(1- \beta_j)\sigma^2}{2} \BE \Big[ \int_0^{T\wedge\tau_k}
\big[X^{(j)}\big]^{\beta_{j}} \big[\widehat{X}^{(-j)}\big]^{\gamma_{j}} e^{P^{(j)}}
\big[ \pi^{(j)}-\check{\pi}^{(j)}(\pi^{(-j)}) \big]^2 \ds\Big].
\end{aligned}
\end{equation*}
By Definition \ref{admissible set 2} and the equivalence between uniform integrability and $\mathcal{L}^1$ convergence,
the expectation on the left-hand side is convergent as $k \rightarrow \infty$.
Meanwhile, by the monotone convergence theorem, the right-hand side also converges as $k \rightarrow \infty$. Thus, we obtain
\begin{equation*}
\begin{aligned}
&J_2^{(j)}( \pi^{(j)},\pi^{(-j)}; x^{(j)},x^{(-j)} )=
\frac{\delta_j}{\delta_j-1} \BE\Big[ \big[X^{(j)}(T)\big]^{\beta_j} \big[\widehat{X}^{(-j)}(T)\big]^{\gamma_j} e^{P^{(j)}(T)} \Big]\\
=\,& \frac{\delta_j}{\delta_j-1}\big[ x^{(j)} \big]^{\beta_j} \big[\hat{x}^{(-j)} \big]^{\gamma_j} e^{P^{(j)}(0)}
-\frac{ \delta_j\beta_j(1- \beta_j)\sigma^2}{2(\delta_j-1)} \BE \Big[ \int_0^{T}
\big[X^{(j)}\big]^{\beta_{j}} \big[\widehat{X}^{(-j)}\big]^{\gamma_{j}} e^{P^{(j)}}
\big[ \pi^{(j)}-\check{\pi}^{(j)}(\pi^{(-j)}) \big]^2 \ds\Big]\\
\leq \,& \frac{\delta_j}{\delta_j-1}\big[ x^{(j)} \big]^{\beta_j} \big[\hat{x}^{(-j)} \big]^{\gamma_j} e^{P^{(j)}(0)},
\end{aligned}
\end{equation*}
and the equality holds if and only if $\pi^{(j)}=\check{\pi}^{(j)}(\pi^{(-j)})$. The proof is complete.
\end{proof}

\begin{theorem}[A best response strategy of $\mathbf{A}^{(j)}$ when $\delta_j= 1$] \label{best response 4}
In the CARA utility case, for all $1\leq j\leq n$, for an arbitrary but fixed competitors' strategy vector $\pi^{(-j)}$, a best response of $\mathbf{A}^{(j)}$ whose $\delta_j= 1$ is
\begin{equation}\label{4105}
\check{\pi}^{(j)}(t)=\frac{\rho(t)}{\sigma(t)},
\end{equation}
and the corresponding value function has an upper bound, which is
\begin{equation*}
V_2^{(j)}(x^{(j)},x^{(-j)};\pi^{(-j)})\leq\Big(1-\frac{\theta_j}{n}\Big)\log x^{(j)}-\theta_j \log \hat{x}^{(-j)} +Q^{(j)}(0),
\end{equation*}
where $(Q^{(j)},\Gamma^{(j)})$ is the unique solution of \eqref{4102}.
\end{theorem}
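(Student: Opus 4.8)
The plan is to reduce the optimisation to a pair of logarithmic It\^o expansions, exploiting that for $\delta_j=1$ the functional \eqref{204} splits additively into a term depending only on $\log X^{(j)}(T)$ and a term depending only on $\log\widehat X^{(-j)}(T)$, and that the latter does not involve $\pi^{(j)}$ --- so the competition component plays no role in the best response and only shifts the value function.

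First I would invoke the symmetry of game \eqref{203}-\eqref{204} to fix $j$ with $\delta_j=1$ and rewrite
\[
J_2^{(j)}\big(\pi^{(j)},\pi^{(-j)};x^{(j)},x^{(-j)}\big)=\Big(1-\tfrac{\theta_j}{n}\Big)\BE\big[\log X^{(j)}(T)\big]-\theta_j\BE\big[\log\widehat X^{(-j)}(T)\big].
\]
Applying It\^o's formula to $\log X^{(j)}$ along \eqref{203} and completing the square in $\pi^{(j)}$ (using $\mu-r=\sigma\rho$) gives
\[
\log X^{(j)}(T)=\log x^{(j)}+\int_0^T\Big[r+\tfrac{\rho^2}{2}-\tfrac{\sigma^2}{2}\Big(\pi^{(j)}-\tfrac{\rho}{\sigma}\Big)^2\Big]\dt+\int_0^T\sigma\pi^{(j)}\dw.
\]
For $(\pi^{(j)},\pi^{(-j)})\in\seta_2$ one has $\pi^{(j)}\in L^{2,\: \mathrm{BMO}}_{\mathcal F}(0,T;\BR)$ and $\sigma\in L^{\infty}_{\mathcal F}(0,T;\BR_{\gg1})$, so $\int_0^{\cdot}\sigma\pi^{(j)}\dw$ is a BMO, hence uniformly integrable, martingale with zero expectation; since moreover $\pi^{(j)}\in L^2_{\mathcal F}(0,T;\BR)$, the drift integral above is integrable. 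Taking expectations then yields $\BE[\log X^{(j)}(T)]\le\log x^{(j)}+\BE\int_0^T(r+\tfrac{\rho^2}{2})\dt$, with equality precisely when $\pi^{(j)}=\rho/\sigma$; in particular $\pi^{(j)}\equiv\rho/\sigma\in L^{\infty}_{\mathcal F}(0,T;\BR)$ makes $J_2^{(j)}$ well-defined and attains this bound.

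Next I would apply It\^o's formula to $\log\widehat X^{(-j)}$ along \eqref{4100}. The It\^o correction $-\tfrac12\big(\tfrac{\sigma}{n}\sum_{k\neq j}\pi^{(k)}\big)^2$ cancels exactly the term $\tfrac{\sigma^2}{2n^2}\big(\sum_{k\neq j}\pi^{(k)}\big)^2$ appearing in the drift of \eqref{4100}, and since $\pi^{(k)}\in L^{2,\: \mathrm{BMO}}_{\mathcal F}(0,T;\BR)$ the remaining stochastic integral vanishes in expectation, leaving
\[
\BE\big[\log\widehat X^{(-j)}(T)\big]=\log\hat{x}^{(-j)}+\BE\int_0^T\Big[\tfrac{(n-1)r}{n}+\tfrac{\sigma\rho}{n}\sum_{k\neq j}\pi^{(k)}-\tfrac{\sigma^2}{2n}\sum_{k\neq j}\big(\pi^{(k)}\big)^2\Big]\dt.
\]
Substituting both identities into $J_2^{(j)}$, a direct computation shows the combined drift reassembles into $F^{(j)}(t,\pi^{(-j)})$; in particular the coefficient of $r$ becomes $(1-\tfrac{\theta_j}{n})-\theta_j\tfrac{n-1}{n}=1-\theta_j$, matching the definition of $F^{(j)}$. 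Since $\Gamma^{(j)}\in L^{2,\: \mathrm{BMO}}_{\mathcal F}(0,T;\BR)$, taking expectations in \eqref{4102} gives $Q^{(j)}(0)=\BE\int_0^T F^{(j)}(t,\pi^{(-j)}(t))\dt$, hence
\[
J_2^{(j)}\big(\pi^{(j)},\pi^{(-j)};x^{(j)},x^{(-j)}\big)\le\Big(1-\tfrac{\theta_j}{n}\Big)\log x^{(j)}-\theta_j\log\hat{x}^{(-j)}+Q^{(j)}(0),
\]
with equality at $\pi^{(j)}=\rho/\sigma$. Taking the supremum over admissible $\pi^{(j)}$ gives the stated bound for $V_2^{(j)}$ and confirms that $\check{\pi}^{(j)}\equiv\rho/\sigma$ is a best response.

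I expect the only genuine obstacle to be the bookkeeping around integrability and the vanishing of the two stochastic integrals, since the sole a priori control on $\pi^{(j)}$ and $\pi^{(-j)}$ is the BMO bound built into $\seta_2$; here I would rely on the $L^2$-inclusion and the John--Nirenberg / reverse H\"older properties of BMO martingales recalled in the Notation section rather than on anything specific to the drift. The algebraic check that the two drifts reassemble into $F^{(j)}$ is routine but needs care because of the competing $(1-\theta_j/n)$ and $\theta_j(n-1)/n$ weights on $r$.
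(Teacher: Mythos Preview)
Your proposal is correct and follows essentially the same route as the paper: apply It\^o's formula, complete the square in $\pi^{(j)}$, and use the BMO property of the strategies in $\seta_2$ to kill the stochastic integrals. The only cosmetic difference is that the paper applies It\^o once to the single process $\big(1-\tfrac{\theta_j}{n}\big)\log X^{(j)}-\theta_j\log\widehat X^{(-j)}+Q^{(j)}$, so the generator $F^{(j)}$ of $Q^{(j)}$ cancels the combined drift directly, whereas you expand $\log X^{(j)}$ and $\log\widehat X^{(-j)}$ separately and then identify $Q^{(j)}(0)=\BE\int_0^T F^{(j)}\dt$ at the end.
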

\begin{proof}
Because in game \eqref{203}-\eqref{204}, all agents are symmetric. We just need to prove that for each fixed $j\in\{1,\cdots,n\}$,
\eqref{4105} is a best response of $\mathbf{A}^{(j)}$ whose $\delta_j=1$.
For an arbitrary but fixed competitors' strategy vector $\pi^{(-j)}$, and any $\pi^{(j)}$ such that $(\pi^{(j)},\pi^{(-j)})\in \seta_2$,
applying It\^{o}'s formula to
$ \big(1-\frac{\theta_j}{n}\big)\log X^{(j)}-\theta_j \log \widehat{X}^{(-j)} +Q^{(j)}$, where $X^{(j)}$ is the solution of \eqref{203} and $\widehat{X}^{(-j)}$ is the solution of \eqref{4100}, we get
\begin{equation*}
\begin{aligned}
&\Big(1-\frac{\theta_j}{n}\Big)\log X^{(j)}(t)-\theta_j \log \widehat{X}^{(-j)}(t) +Q^{(j)}(t)
-\Big[ \Big(1-\frac{\theta_j}{n}\Big)\log x^{(j)}-\theta_j \log \hat{x}^{(-j)} +Q^{(j)}(0)\Big]\\
=\,&-\frac{1}{2}\Big(1-\frac{\theta_j}{n}\Big) \int_0^t \big[\sigma\pi^{(j)}-\rho\big]^2 \ds
+\int_0^t \Big( \sigma\Big[\pi^{(j)}-\frac{\theta_j}{n}\sum_{k=1}^n \pi^{(k)} \Big]+\Gamma^{(j)}\Big) \dw (s).
\end{aligned}
\end{equation*}
Because the stochastic integral in the above equation is a martingale, we obtain
\begin{equation*}
\begin{aligned}
&\BE\Big[\Big(1-\frac{\theta_j}{n}\Big)\log X^{(j)}(T)-\theta_j \log \widehat{X}^{(-j)}(T) \Big]\\
=\,&\Big(1-\frac{\theta_j}{n}\Big)\log x^{(j)}-\theta_j \log \hat{x}^{(-j)} +Q^{(j)}(0)-\frac{1}{2}\Big(1-\frac{\theta_j}{n}\Big)
\BE\Big[ \int_0^T \big[\sigma\pi^{(j)}-\rho\big]^2 \ds\Big]\\
\leq \,&\Big(1-\frac{\theta_j}{n}\Big)\log x^{(j)}-\theta_j \log \hat{x}^{(-j)} +Q^{(j)}(0).
\end{aligned}
\end{equation*}
In addition, the equality holds if and only if $\pi^{(j)}=\check{\pi}^{(j)}$. The proof is complete.
\end{proof}

\begin{remark}
When $\delta_j=1$, we have $\beta_j=\gamma_j=0$ and the unique solution of \eqref{4101} is $(P^{(j)},\Lambda^{(j)})=(0,0)$. In this case, the best response \eqref{4104} is coincidence with \eqref{4105}. Therefore, the best response of $\mathbf{A}^{(j)}$ in the CRRA utility case can be
written in a unified form, which is given by \eqref{4104}.
\end{remark}

To solve the Nash equilibrium in $\seta_2$ for game \eqref{203}-\eqref{204}, for convenience, we rewrite \eqref{4101}.
Let $(\widehat{P}^{(j)},\widehat{\Lambda}^{(j)})=(\delta_j P^{(j)},\delta_j\Lambda^{(j)})$.
Then, for an arbitrary but fixed admissible strategy vector $(\pi^{(1)},\pi^{(2)}, \ldots , \pi^{(n)})\in\seta_2$,
$(\widehat{P}^{(j)},\widehat{\Lambda}^{(j)}) \in L_{\mathcal{F}}^{\infty}(0,T;\BR)\times L_{\mathcal{F}}^{2,\: \mathrm{BMO}}(0,T;\BR)$, $1\leq j\leq n$,
are solutions of the following BSDEs:
\begin{equation}
\label{4201}
\left\{
\begin{aligned}
\dd \widehat{P}^{(j)}(t)&=-\widehat{H}^{(j)}(\widehat{\Lambda}^{(j)}(t),\pi^{(-j)}(t))\dt +\widehat{\Lambda}^{(j)}(t)\dw(t),\\
\widehat{P}^{(j)}(T)&=0, \quad 1\leq j\leq n,
\end{aligned}
\right.
\end{equation}
where for all $1\leq j\leq n$, $(t, \Lambda)\in [0,T]\times\BR$, and an arbitrary but fixed admissible strategy vector $(\pi^{(1)},\pi^{(2)}, \ldots , \pi^{(n)})\in\seta_2$,
\begin{equation*}
\widehat{H}^{(j)}(t, \Lambda,\pi^{(-j)} ) \triangleq \frac{\Lambda^2}{2(1-\beta_j)\delta_j}
+\frac{1}{1-\beta_j}\Big(\beta_j\rho(t)+\frac{\gamma_j\sigma(t)}{n} \sum_{k\neq j} \pi^{(k)}\Big) \Lambda +\delta_j h^{(j)}(t,\pi^{(-j)}).
\end{equation*}

For all $1\leq j\leq n$, $\delta_j>0$ and $\theta_j\in[0,1]$, we define the following constants:
%$1-\beta_j=\frac{1}{\delta_j}-\frac{\gamma_j}{n}$
$$
\overline{\delta^2}\triangleq\frac{1}{n}\sum_{j=1}^n \big[\delta_j^2\big], \quad
\overline{\delta^2\gamma}\triangleq\frac{1}{n}\sum_{j=1}^n \big[\delta_j^2\gamma_j\big], \quad
\overline{\delta^3\gamma}\triangleq\frac{1}{n}\sum_{j=1}^n \big[\delta_j^3\gamma_j\big], \quad
\overline{\delta^2\gamma^2}\triangleq\frac{1}{n}\sum_{j=1}^n \big[\delta_j^2\gamma_j^2\big],
$$
$$
\overline{\delta\gamma}\triangleq\frac{1}{n}\sum_{j=1}^n \big[\delta_j\gamma_j\big], \quad
\bar{\lambda}\triangleq \frac{1}{1-\overline{\delta\gamma}},\quad
C_1^{(j)}\triangleq\delta_j\gamma_j\bar{\lambda}\bar{\delta}+\delta_j,\quad
\overline{C}_1\triangleq \frac{1}{n} \sum_{j=1}^n C_1^{(j)},\quad
\overline{C_1^2}\triangleq\frac{1}{n}\sum_{j=1}^n \big[C_1^2\big],
$$
$$
C_2^{(j)}\triangleq \delta_j^2(\gamma_j\bar{\lambda}\bar{\delta}+1)^2 -\delta_j\gamma_j\bar{\lambda}\bar{\delta}-\delta_j
=C_1^{(j)}(C_1^{(j)}-1),\quad
C_3^{(j)}\triangleq(\delta_j-1)(1-\theta_j)+\delta_j\gamma_j(\bar{\lambda}\bar{\delta}-1)=C_1^{(j)}-1.$$

For each $j\in\{1,\cdots, n\}$, we introduce a 1-dimensional quadratic BSDE:
\begin{equation}
\label{4202}
\left\{
\begin{aligned}
\dd Y^{(j)}(t)&=-\Big[\frac{1}{2}\big[Z^{(j)}(t)\big]^2+(C_1^{(j)}-1)\rho Z^{(j)}(t)+\frac{1}{2}C_2^{(j)}\rho^2+C_3^{(j)}r \Big]\dt +Z^{(j)}(t)\dw(t),\\
Y^{(j)}(T)&=0.
\end{aligned}
\right.
\end{equation}
From Theorem 2.4 in \cite{Fan Hu Tang}, the quadratic BSDE \eqref{4202} admits a unique solution in $L_{\mathcal{F}}^{\infty}(0,T;\BR)\times L_{\mathcal{F}}^{2,\: \mathrm{BMO}}(0,T;\BR)$.
Let $(\widehat{P}^{(j)},\widehat{\Lambda}^{(j)})$ and $(Y^{(j)},Z^{(j)})$ be the solutions of BSDE \eqref{4201} and \eqref{4202}, respectively. We define the following processes:
$$
\overline{P}\triangleq \frac{1}{n} \sum_{j=1}^n \widehat{P}^{(j)},\quad
\overline{\Lambda}\triangleq \frac{1}{n} \sum_{j=1}^n \widehat{\Lambda}^{(j)},\quad
\overline{\Lambda^2}\triangleq \frac{1}{n} \sum_{j=1}^n \big[\widehat{\Lambda}^{(j)}\big]^2,\quad
\overline{\delta \Lambda}\triangleq\frac{1}{n}\sum_{j=1}^n \big[\delta_j \widehat{\Lambda}^{(j)}\big],
$$
$$
\overline{\delta\gamma \Lambda}\triangleq\frac{1}{n}\sum_{j=1}^n \big[\delta_j\gamma_j\widehat{\Lambda}^{(j)}\big],\quad
\overline{Z}\triangleq \frac{1}{n} \sum_{j=1}^n Z^{(j)},\quad
\overline{Z^2}\triangleq \frac{1}{n} \sum_{j=1}^n \big[Z^{(j)}\big]^2,\quad
\overline{C_1 Z}\triangleq\frac{1}{n}\sum_{j=1}^n \big[C_1^{(j)} Z^{(j)}\big].
$$

\begin{theorem}[Nash equilibrium in the CRRA utility case] \label{NE 2}
There exists a Nash equilibrium in $\seta_2$ for game \eqref{203}-\eqref{204}, which is given by
\begin{equation}\label{4203}
\begin{aligned}
\pi^{(j,*)}(t) = \frac{ C_1^{(j)}\rho(t)+Z^{(j)}(t) }{\sigma(t)},\quad 1\leq j\leq n,
\end{aligned}
\end{equation}
where $(Y^{(j)},Z^{(j)})$ is the unique solution of BSDE \eqref{4202}. Furthermore, the equilibrium value function of $\mathbf{A}^{(j)}$ is
\begin{equation}\label{4204}
V_2^{(j)}(x^{(j)},x^{(-j)};\pi^{(-j,*)})=
\begin{cases}
\frac{\delta_j}{\delta_j-1}\big[x^{(j)}\big]^{\beta_j} \big[\hat{x}^{(-j)} \big]^{\gamma_j} e^{P^{(j)}(0)}, & \delta_i\in(0,1)\cup(1,+\infty),\bigskip\\
\Big(1-\frac{\theta_j}{n}\Big)\log x^{(j)}-\theta_j \log \hat{x}^{(-j)} +Q^{(j)}(0), & \delta_j=1,
\end{cases}
\end{equation}
where $(P^{(j)},\Lambda^{(j)})$ and $(Q^{(j)},\Gamma^{(j)})$ are solutions of the following BSDEs:
\begin{equation*}
\left\{
\begin{aligned}
\dd P^{(j)}(t)=\,&-\Big[ \frac{\big[\Lambda^{(j)}(t)\big]^2}{2(1-\beta_j)}
+\frac{1}{1-\beta_j}\Big(\beta_j\rho+\gamma_j\overline{C}_1\rho+\gamma_j\overline{Z}(t)
-\frac{\gamma_j\big[C_1^{(j)}\rho+Z^{(j)}(t)\big]}{n} \Big) \Lambda^{(j)}(t)\\
&+\frac{1}{2(1-\beta_j)} \Big( \rho+\gamma_j\overline{C}_1\rho+\gamma_j\overline{Z}(t)
-\frac{\gamma_j\big[C_1^{(j)}\rho+Z^{(j)}(t)\big]}{n} \Big)^2
+r(1-\theta_j)(1-\frac{1}{\delta_j})\\
&-\frac{\rho^2}{2}
-\frac{\gamma_j}{2} \Big( \overline{C_1^2}\rho^2+\overline{Z^2}(t)+2\rho\overline{ C_1Z}(t)
-\frac{ \big[C_1^{(j)}\rho+Z^{(j)}(t)\big]^2 } {n}\Big)\Big]\dt +\Lambda^{(j)}(t)\dw(t),\\
P^{(j)}(T)=\,&0,\quad 1 \leq j\leq n,
\end{aligned}
\right.
\end{equation*}
and
\begin{equation*}
\left\{
\begin{aligned}
\dd Q^{(j)}(t)=\,&-\Big[ (1-\theta_j)r+\Big(1-\frac{\theta_j}{n} \Big)\frac{\rho^2}{2}
-\theta_j\rho\Big( \overline{C}_1\rho+\overline{Z}(t) -\frac{C_1^{(j)}\rho+Z^{(j)}(t)}{n} \Big)\\
& +\frac{\theta_j}{2} \Big( \overline{C_1^2}\rho^2+\overline{Z^2}(t)+2\rho\overline{ C_1Z}(t)
-\frac{ \big[C_1^{(j)}\rho+Z^{(j)}(t)\big]^2 } {n}\Big) \Big]\dt +\Gamma^{(j)}(t)\dw(t),\\
Q^{(j)}(T)=\,&0, \quad 1 \leq j\leq n.
\end{aligned}
\right.
\end{equation*}
\end{theorem}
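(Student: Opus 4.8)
The plan is to apply the BR method. Theorems \ref{best response 3} and \ref{best response 4} (and the remark after the latter, which exhibits $\delta_j=1$ as the degenerate case $\beta_j=\gamma_j=0$ of \eqref{4104}) already give, for each fixed competitors' vector $\pi^{(-j)}$, the best response $\check\pi^{(j)}(\pi^{(-j)})$ in closed form in terms of the integrand $\Lambda^{(j)}$ of BSDE \eqref{4101}. What remains is therefore: (a) to solve the fixed-point system \eqref{205} with $\check\pi^{(j)}$ given by \eqref{4104}; and (b) to check that the resulting strategy profile lies in $\seta_2$. The value function \eqref{4204} then follows since the upper bounds of Theorems \ref{best response 3}--\ref{best response 4} are attained exactly at the best response. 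The non-trivial content is that \eqref{205}, which a priori couples the $n$ quadratic BSDEs \eqref{4101}, can be disentangled into the $n$ \emph{independent} scalar quadratic BSDEs \eqref{4202}.

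\emph{Step 1: the decoupling.} Put $u_j\triangleq\sigma\pi^{(j)}$, $\bar u\triangleq\frac1n\sum_k u_k$, and work with the rescaled BSDEs \eqref{4201}. Substituting \eqref{4104} into \eqref{205}, using $\sum_{k\neq j}u_k=n\bar u-u_j$ and the elementary identity $1-\beta_j+\frac{\gamma_j}{n}=\frac1{\delta_j}$ (immediate from the definitions of $\beta_j,\gamma_j$), one gets $\frac{u_j}{\delta_j}=\gamma_j\bar u+\rho+\Lambda^{(j)}$, i.e.\ $u_j=\delta_j\gamma_j\bar u+\delta_j\rho+\widehat\Lambda^{(j)}$. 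Averaging over $j$ and solving the resulting scalar relation yields $\bar u=\bar\lambda(\bar\delta\rho+\overline{\delta\Lambda})$, hence $u_j=C_1^{(j)}\rho+\widehat\Lambda^{(j)}+\delta_j\gamma_j\bar\lambda\,\overline{\delta\Lambda}$. This motivates setting $Z^{(j)}\triangleq u_j-C_1^{(j)}\rho=\widehat\Lambda^{(j)}+\delta_j\gamma_j\bar\lambda\,\overline{\delta\Lambda}$ and letting $Y^{(j)}$ be the corresponding affine combination of $\widehat P^{(1)},\dots,\widehat P^{(n)}$ (the one whose martingale integrand is $Z^{(j)}$). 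Plugging the equilibrium expressions into the driver $\widehat H^{(j)}$ of \eqref{4201} and its $\delta_k$-weighted average, the mean-field averages ($\overline{Z^2}$, $\overline{C_1Z}$, $\overline{\delta\gamma\Lambda}$, and the like) cancel, and using $C_2^{(j)}=C_1^{(j)}(C_1^{(j)}-1)$ and $C_3^{(j)}=C_1^{(j)}-1$ one checks that $(Y^{(j)},Z^{(j)})$ satisfies precisely the scalar quadratic BSDE \eqref{4202}. Conversely, taking $(Y^{(j)},Z^{(j)})$ to be the unique solution of \eqref{4202} in $L^{\infty}_{\mathcal F}(0,T;\BR)\times L^{2,\mathrm{BMO}}_{\mathcal F}(0,T;\BR)$ (it has bounded coefficients, so Theorem 2.4 in \cite{Fan Hu Tang} applies) and defining $\pi^{(j,*)}\triangleq(C_1^{(j)}\rho+Z^{(j)})/\sigma$ produces the candidate \eqref{4203}, which lies in $(L^{2,\mathrm{BMO}}_{\mathcal F}(0,T;\BR))^n$ since $\rho$ and $\sigma^{-1}$ are bounded and each $Z^{(j)}$ is BMO; reversing the algebra of Step 1 shows it solves \eqref{205}.

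\emph{Step 2: admissibility.} It remains to verify the three requirements of Definition \ref{admissible set 2} for $(\pi^{(1,*)},\dots,\pi^{(n,*)})$. Requirement (i) is immediate, since with these strategies \eqref{203} is a linear SDE with BMO coefficients and hence has a unique strong continuous solution. For requirement (ii), substituting $\pi^{(-j)}=\pi^{(-j,*)}$ into \eqref{4101} (using $\sigma\pi^{(k,*)}=C_1^{(k)}\rho+Z^{(k)}$) turns it into the first displayed BSDE of the theorem: its driver is quadratic in $\Lambda^{(j)}$ with a \emph{constant} leading coefficient $\frac1{2(1-\beta_j)}$ but a \emph{BMO} (not bounded) linear coefficient and a free term that is a sum of products of BMO processes, so the classical theory of \cite{Fan Hu Tang} does not apply, and its solvability in $L^{\infty}_{\mathcal F}(0,T;\BR)\times L^{2,\mathrm{BMO}}_{\mathcal F}(0,T;\BR)$ is exactly the content of Lemma \ref{Linear BSDE 1}; the companion linear BSDE for $(Q^{(j)},\Gamma^{(j)})$ (relevant when $\delta_j=1$) is solvable by Lemma 3.6 in \cite{Xu arXiv1}. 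For requirement (iii), apply It\^{o}'s formula to $[X^{(j,*)}]^{\beta_j}[\widehat X^{(-j,*)}]^{\gamma_j}e^{P^{(j)}}$ along \eqref{203}--\eqref{4100}: because $\pi^{(j,*)}=\check\pi^{(j)}(\pi^{(-j,*)})$ the drift cancels (that is how $P^{(j)}$ was chosen), so this process equals the constant $[x^{(j)}]^{\beta_j}[\hat x^{(-j)}]^{\gamma_j}e^{P^{(j)}(0)}$ times a Dol\'eans--Dade exponential whose exponent $\beta_j\sigma\pi^{(j,*)}+\frac{\gamma_j\sigma}{n}\sum_{k\neq j}\pi^{(k,*)}+\Lambda^{(j)}$ is BMO; hence it is a uniformly integrable martingale, and the stopped family in Definition \ref{admissible set 2}(iii) is uniformly integrable as a family of conditional expectations of its terminal value. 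Thus $(\pi^{(1,*)},\dots,\pi^{(n,*)})\in\seta_2$, so by the BR method it is a Nash equilibrium, and since equality holds in the bounds of Theorems \ref{best response 3} and \ref{best response 4} we obtain \eqref{4204} with $(P^{(j)},\Lambda^{(j)})$, $(Q^{(j)},\Gamma^{(j)})$ the solutions of the two displayed BSDEs.

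\emph{Main obstacle.} The two hard points are the solvability claim used in (ii) and the decoupling in Step 1. The former is genuinely outside the reach of standard quadratic-BSDE results because the coefficients are only BMO, not bounded, which is precisely why the new estimates/fixed-point argument of Lemma \ref{Linear BSDE 1} are needed; the latter is a lengthy but mechanical computation, whose essential insight is to guess the right affine change of unknowns $(\widehat P^{(j)},\widehat\Lambda^{(j)})\mapsto(Y^{(j)},Z^{(j)})$ and to recognise that all the mean-field averages ($\overline{\delta\gamma}$, $\overline{\delta^2\gamma}$, $\overline{C_1^2}$, $\overline{Z^2}$, $\dots$) conspire to cancel, leaving the self-contained scalar driver of \eqref{4202}.
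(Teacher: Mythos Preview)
Your overall strategy is the same as the paper's, and Steps 1 and 2(i),(iii) are correct sketches of what the paper does (modulo a harmless notational slip: the average of the $\widehat\Lambda^{(j)}$ is what the paper calls $\overline{\Lambda}$, not $\overline{\delta\Lambda}$; the specific affine map is $(Y^{(j)},Z^{(j)})=(\widehat P^{(j)}+\delta_j\gamma_j\bar\lambda\,\overline P,\ \widehat\Lambda^{(j)}+\delta_j\gamma_j\bar\lambda\,\overline\Lambda)$).

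The genuine gap is in Step 2(ii). Lemma \ref{Linear BSDE 1} is a result about \emph{linear} BSDEs $\dd Y=-[aY+bZ]\dt+Z\dw$, not about quadratic ones, so it does not apply to the BSDE for $(P^{(j)},\Lambda^{(j)})$ as written. The paper bridges this in two nontrivial moves that your sketch omits. First, it performs the exponential change of variable $\widetilde P^{(j)}=\exp\{P^{(j)}/(1-\beta_j)\}$, which converts \eqref{4101} into the genuinely linear BSDE \eqref{4103} with coefficients $a=\frac{h^{(j)}(\pi^{(-j,*)})}{1-\beta_j}$ and $b=\frac{1}{1-\beta_j}(\beta_j\rho+\frac{\gamma_j\sigma}{n}\sum_{k\neq j}\pi^{(k,*)})$. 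Second --- and this is the real work --- it verifies the hypotheses \eqref{appass1}--\eqref{appass2} of Lemma \ref{Linear BSDE 1} for this $a$: under the Girsanov measure $\widetilde\BP^{(j)}$ one computes the dynamics of $\frac{\gamma_j}{n(1-\beta_j)}\sum_{k\neq j}Y^{(k)}$ from \eqref{4202} and uses it to write $\exp\big(\int_t^T \frac{h^{(j)}(\pi^{(-j,*)})}{1-\beta_j}\ds\big)$ as a Dol\'eans--Dade exponential of a BMO integrand times a factor that is bounded above and below (because the $Y^{(k)}$ and $L^{(j)}$ are bounded). Only then does Lemma \ref{Linear BSDE 1} yield $\widetilde P^{(j)}\in L^\infty_{\mathcal F}(0,T;\BR_{\gg1})$, hence $P^{(j)}\in L^\infty_{\mathcal F}(0,T;\BR)$ and $\Lambda^{(j)}\in L^{2,\mathrm{BMO}}_{\mathcal F}(0,T;\BR)$. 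Saying that the solvability ``is exactly the content of Lemma \ref{Linear BSDE 1}'' skips both of these steps, and in particular the verification of \eqref{appass1} is precisely where the decoupled BSDEs \eqref{4202} are fed back into the admissibility argument.
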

\begin{proof}
To obtain the Nash equilibrium, we solve the $n$-variable system of equations \eqref{205} where $\check{\pi}^{(j)}(\pi^{(-j)} )$ is given by \eqref{4104}.
Using the identity
$$\frac{1}{n}\sum_{k\neq j} \pi^{(k)}=\bar{\pi}-\frac{1}{n}\pi^{(j)},$$
we see \eqref{205} is equivalent to the following $n$-variable system of equations:
\begin{equation}\label{4205}
\begin{aligned}
\pi^{(j)}= \frac{\delta_j\rho+\widehat{\Lambda}^{(j)}}{\sigma}+ \delta_j\gamma_j \bar{\pi}, \quad 1\leq j\leq n,
\end{aligned}
\end{equation}
where $(\widehat{P}^{(j)},\widehat{\Lambda}^{(j)})$ is a solution of \eqref{4201}.
Then, averaging over $j=1,2, \ldots n$, we attain
$$\bar{\pi}=\frac{\bar{\lambda}( \bar{\delta}\rho+ \overline{\Lambda} \,)}{\sigma}.$$
Thus, \eqref{4205} is equivalent to the following $n$-variable system of equations:
\begin{equation}\label{4206}
\begin{aligned}
\pi^{(j)}=\frac{1}{\sigma}\big[ C_1^{(j)}\rho+\widehat{\Lambda}^{(j)}+ \delta_j\gamma_j\bar{\lambda}\overline{\Lambda}\,\big] ,
\quad 1\leq j\leq n.
\end{aligned}
\end{equation}

For any solution of \eqref{4206}, we have
\begin{equation}\label{4207}
\begin{aligned}
\frac{\sigma}{n}\sum_{k\neq j} \pi^{(k)}=\,& \sigma\Big(\bar{\pi}-\frac{1}{n}\pi^{(j)}\Big)=
\Big( \bar{\lambda}\bar{\delta}-\frac{C_1^{(j)}}{n}\Big)\rho
-\frac{1}{n}\widehat{\Lambda}^{(j)}
+\Big( 1-\frac{\delta_j\gamma_j}{n}\Big)\bar{\lambda} \overline{\Lambda},\\
\frac{\sigma^2}{n}\sum_{k\neq j} {\big[\pi^{(k)}\big]}^2 =\,&
\sigma^2\Big( \sum_{k=1}^n {\big[\pi^{(k)}\big]}^2- {\big[\pi^{(j)}\big]}^2 \Big)
=\overline{\Lambda^2}+\bar{\lambda}^2\overline{\delta^2\gamma^2}\,\overline{\Lambda}^2
+\big( \,\overline{\delta^2}+\bar{\lambda}^2\bar{\delta}^2\overline{\delta^2\gamma^2}
+2\bar{\lambda}\bar{\delta}\, \overline{\delta^2\gamma}\, \big)\rho^2\\
&+2\bar{\lambda}\overline{\delta\gamma\Lambda}\,\overline{\Lambda}+2\bar{\lambda}\bar{\delta}\rho\overline{\delta\gamma\Lambda}
+2\rho\overline{\delta\Lambda}
+2\bar{\lambda}\big(\,\overline{\delta^2\gamma}+\bar{\lambda}\bar{\delta}\,\overline{\delta^2\gamma^2}\,\big)\rho\overline{\Lambda}
-\frac{1}{n}\Big(\big[\widehat{\Lambda}^{(j)}\big]^2+\delta_j^2\gamma_j^2\bar{\lambda}^2\overline{\Lambda}^2\\
&+\delta_j^2(\gamma_j\bar{\lambda}\bar{\delta}+1)^2\rho^2
+2\delta_j\gamma_j\bar{\lambda}\widehat{\Lambda}^{(j)}\overline{\Lambda}
+2\delta_j(\gamma_j\bar{\lambda}\bar{\delta}+1)\rho\widehat{\Lambda}^{(j)}
+2\delta_j^2\gamma_j\bar{\lambda}(\gamma_j\bar{\lambda}\bar{\delta}+1)\rho\overline{\Lambda} \Big).
\end{aligned}
\end{equation}
Substituting \eqref{4207} into \eqref{4201}, we get
\begin{equation}\label{4208}
\left\{
\begin{aligned}
\dd \widehat{P}^{(j)}(t)=\,&-\Big[ \frac{[\widehat{\Lambda}^{(j)}(t)]^2}{2}
+ \delta_j\gamma_j\bar{\lambda} \widehat{\Lambda}^{(j)}(t)\overline{\Lambda}(t)
+ \delta_j\gamma_j\bar{\lambda}^2 \big( \delta_j\gamma_j-\overline{\delta^2\gamma^2} \,\big) \frac{[\overline{\Lambda}(t)]^2}{2}
-\frac{\delta_j\gamma_j}{2}\Big( \overline{\Lambda^2}(t)\\
&+2\bar{\lambda} \overline{\delta\gamma \Lambda}(t) \overline{\Lambda}(t)
+2\rho\big(\bar{\lambda}\bar{\delta}\, \overline{\delta\gamma \Lambda}(t) + \overline{\delta \Lambda}(t)\big)\Big)
+ \big( \delta_j-1+\delta_j\gamma_j\bar{\lambda}\bar{\delta} \big) \rho \widehat{\Lambda}^{(j)}(t)\\
&+ \delta_j\gamma_j\bar{\lambda} \big( \delta_j(\gamma_j\bar{\lambda}\bar{\delta}+1) - \overline{\delta^2\gamma}
-\bar{\lambda}\bar{\delta}\,\overline{\delta^2\gamma^2} \, \big) \rho\overline{\Lambda}(t)
+\frac{\rho^2}{2}\Big(\delta_j^2(1+\gamma_j\bar{\lambda}\bar{\delta} )^2-\delta_j\\
&-\delta_j\gamma_j \big(\overline{\delta^2}+2\bar{\lambda}\bar{\delta}\,\overline{\delta^2\gamma}
+\bar{\lambda}^2\bar{\delta}^2\overline{\delta^2\gamma^2}\, \big) \Big)
+r(1-\theta_j)(\delta_j-1) \Big] \dt +\widehat{\Lambda}^{(j)}(t)\dw(t),\\
\widehat{P}^{(j)}(T)=\,&0, \quad 1\leq j\leq n.
\end{aligned}
\right.
\end{equation}
Then, $(\overline{P},\overline{\Lambda})$ satisfies the following BSDE:
\begin{equation}\label{4209}
\left\{
\begin{aligned}
\dd \overline{P}(t)=\,&-\Big[ \frac{1}{2\bar{\lambda}} \Big( \overline{\Lambda^2}(t)
+2\bar{\lambda} \overline{\delta\gamma \Lambda}(t) \overline{\Lambda}(t)
+2\rho\big(\bar{\lambda}\bar{\delta}\, \overline{\delta\gamma \Lambda}(t) + \overline{\delta \Lambda}(t)\big)\Big) \\ &
+ \bar{\lambda}\overline{\delta^2\gamma^2}\frac{[\overline{\Lambda}(t)]^2}{2}
+\big( \overline{\delta^2\gamma}+\bar{\lambda}\bar{\delta}\,\overline{\delta^3\gamma}-1\big)\rho\overline{\Lambda}(t)
-\frac{\bar{\delta}\rho^2}{2}\\
&+\big(\overline{\delta\delta}+2\bar{\lambda}\bar{\delta}\,\overline{\delta^2\gamma}
+\bar{\lambda}^2\bar{\delta}^2\overline{\delta^2\gamma^2}\, \big)\frac{\rho^2}{2\bar{\lambda}}
+\frac{(\bar{\lambda}\bar{\delta}-1)r}{\bar{\lambda}} \Big] \dt
+\overline{\Lambda}(t)\dw(t),\\
\overline{P}(T)=\,&0.
\end{aligned}
\right.
\end{equation}
Let $(Y^{(j)},Z^{(j)})=(\widehat{P}^{(j)}+ \delta_j\gamma_j\bar{\lambda}\overline{P},
\widehat{\Lambda}^{(j)}+ \delta_j\gamma_j\bar{\lambda}\overline{\Lambda})$. From \eqref{4208}-\eqref{4209}, we have
$(Y^{(j)},Z^{(j)})$ satisfies \eqref{4202}. And \eqref{4206} becomes \eqref{4203}. Taking \eqref{4203} into \eqref{4101}-\eqref{4102},
the equilibrium value function of $\mathbf{A}^{(j)}$ is given by \eqref{4204}.

To end this proof, it only remains to show that the Nash equilibrium given in \eqref{4203} is admissible.
For every $1\leq j\leq n$, because $Z^{(j)}\in L^{2,\text{BMO}}_{\mathcal{F}}(0,T;\BR)$, it is obvious $\pi^{(j,*)}\in L^{2,\text{BMO}}_{\mathcal{F}}(0,T;\BR)$.

SDE \eqref{203} with the strategy $\pi^{(j)}=\pi^{(j,*)}$ is:
\begin{equation*}
\left\{
\begin{aligned}
\frac{\dd X^{(j)}(t)}{X^{(j)}(t)}=\,&\big[ r+\rho( C_1^{(j)}\rho+Z^{(j)}) \big]\dt
+ ( C_1^{(j)}\rho+Z^{(j)}) \dw(t),\\
X^{(j)}(0)=\,&x^{(j)}\in \BR_{+},\quad 1\leq j\leq n.
\end{aligned}
\right.
\end{equation*}
The above equation is a linear SDE of $X^{(j)}$, and it has a unique strong solution in $L^{0}_{\mathcal{F}}(C(0,T);\BR)$, which is
$$X^{(j)}(t)=x^{(j)}\exp\Big\{ \int_0^t \Big(r+\rho( C_1^{(j)}\rho+Z^{(j)})-\frac{1}{2}( C_1^{(j)}\rho+Z^{(j)})^2 \Big)\ds
+\int_0^t ( C_1^{(j)}\rho+Z^{(j)})\dw (s) \Big\}.$$

To prove the solvability of \eqref{4101} with the Nash equilibrium given in \eqref{4203}, we first use exponential transformation to transform \eqref{4101} to a linear BSDE with unbounded coefficients. Let $(\widetilde{P}^{(j)},\widetilde{\Lambda}^{(j)})=(e^{\frac{P^{(j)}}{1-\beta_j}}, \frac{\Lambda}{1-\beta_j}e^{\frac{P^{(j)}}{1-\beta_j}})$,
$1\leq j\leq n$.
Then $(\widetilde{P}^{(j)},\widetilde{\Lambda}^{(j)})$ satisfies the following linear BSDE:
\begin{equation}
\label{4103}
\left\{
\begin{aligned}
\dd \widetilde{P}^{(j)}(t)&=-\Big[ \frac{h^{(j)}(\pi^{(-j,*)})}{1-\beta_j}\widetilde{P}^{(j)}(t)
+\frac{1}{1-\beta_j}\Big(\beta_j\rho+\frac{\gamma_j\sigma}{n} \sum_{k\neq j} \pi^{(k,*)}\Big)\widetilde{\Lambda}^{(j)}(t)\Big] \dt +\widetilde{\Lambda}^{(j)}(t)\dw(t),\\
\widetilde{P}^{(j)}(T)&=1.
\end{aligned}
\right.
\end{equation}
Because $\pi^{(j,*)}\in L^{2,\text{BMO}}_{\mathcal{F}}(0,T;\BR)$ for all $1\leq j\leq n$, $\mathcal{E}\big( \int_0^{\cdot} \frac{1}{1-\beta_j}\big(\beta_j\rho+\frac{\gamma_j\sigma}{n} \sum_{k\neq j} \pi^{(k,*)}\big) \dw (s) \big)$ is a uniformly integrable martingale on $[0,T]$. Then for each $1\leq j\leq n$, we can define the probability measure $\widetilde{\BP}^{(j)}$ by
\begin{equation*}
\frac{\dd \widetilde{\BP}^{(j)}} {\dd \BP}\bigg|_{\mathcal{F}_T}=\mathcal{E}\Big( \int_0^T \frac{1}{1-\beta_j}\Big(\beta_j\rho+\frac{\gamma_j\sigma}{n} \sum_{k\neq j} \pi^{(k,*)}\Big) \dw (s) \Big).
\end{equation*}
Consequently,
$$ \widetilde{W}^{(j)}(t) \triangleq W(t) -\int_0^t \frac{1}{1-\beta_j}\Big(\beta_j\rho+\frac{\gamma_j\sigma}{n} \sum_{k\neq j} \pi^{(k,*)}\Big) \ds$$
is a Brownian motion under the probability $\widetilde{\BP}^{(j)}$.
From \eqref{4202}, for all $1\leq j\leq n$, when $\gamma_j\neq0$, under the probability $\widetilde{\BP}^{(j)}$ we have
\begin{equation*}
\left\{
\begin{aligned}
\dd \frac{\gamma_j}{n(1-\beta_j)}\sum_{k\neq j}Y^{(k)}(t)=\,&\Big\{\frac{h^{(j)}(\pi^{(-j,*)})}{1-\beta_j}
+\frac{\gamma_j^2}{2n^2(1-\beta_j)^2}\Big[\sum_{k\neq j} Z^{(k)}(t) \Big]^2 +L^{(j)}(t)\Big\}\dt\\
&+\frac{\gamma_j}{n(1-\beta_j)}\sum_{k\neq j} Z^{(k)}(t) \dd\widetilde{W}^{(j)}(t),\\
\frac{\gamma_j}{n(1-\beta_j)}\sum_{k\neq j}Y^{(k)}(T)=\,&0,\quad 1\leq j\leq n,
\end{aligned}
\right.
\end{equation*}
where \begin{equation*}
\begin{aligned}L^{(j)}(t)\triangleq \,& \frac{\rho^2(t)} {2(1-\beta_j)} \Big[ 1+\frac{\gamma_j}{n}\sum_{k\neq j}C_1^{(k)}
-\frac{1}{1-\beta_j}\Big(1+\frac{\gamma_j}{n}\sum_{k\neq j}C_1^{(k)} \Big)^2 \Big]\\
&-\frac{r(t)} {1-\beta_j}\Big[ (1-\theta_j)\Big(1-\frac{1}{\delta_j}\Big) +\frac{\gamma_j}{n}\sum_{k\neq j}C_3^{(k)}\Big]
\in L^{\infty}_{\mathcal{F}}(0,T;\BR).
\end{aligned}
\end{equation*}
For $1\leq j\leq n$, let $\widetilde{\BE}^{(j)}$ be the expectation w.r.t. $\widetilde{\BP}^{(j)}$. For all $1\leq j\leq n$, we obtain
\begin{equation*}
\begin{aligned}
\int_t^T \frac{h^{(j)}(\pi^{(-j,*)})}{1-\beta_j} \ds =\,&
-\int_t^T\frac{\gamma_j^2}{2n^2(1-\beta_j)^2}\Big[\sum_{k\neq j} Z^{(k)} \Big]^2 \ds
-\int_t^T \frac{\gamma_j}{n(1-\beta_j)}\sum_{k\neq j} Z^{(k)} \dd\widetilde{W}^{(j)}(s)\\
&-\int_t^T L^{(j)} \ds -\frac{\gamma_j}{n(1-\beta_j)}\sum_{k\neq j}Y^{(k)}(t), ~~\forall\;t\in[0,T].
\end{aligned}
\end{equation*}
Then, for all $1\leq j\leq n$ and $t\in[0,T]$, we get
\begin{equation*}
\widetilde{\BE}^{(j)}_t\Big[\exp\Big( \int_t^T \frac{h^{(j)}(\pi^{(-j,*)})}{1-\beta_j} \ds \Big)\Big]
=\widetilde{\BE}^{(j)}_t\Big[ \frac{M^{(j)}(T)}{M^{(j)}(t)}\exp\Big(-\int_t^T L^{(j)} \ds -\frac{\gamma_j}{n(1-\beta_j)}\sum_{k\neq j}Y^{(k)}(t) \Big)\Big],
\end{equation*}
where $$M^{(j)}(t)\triangleq \mathcal{E}\Big( -\int_0^t \frac{\gamma_j}{n(1-\beta_j)}\sum_{k\neq j} Z^{(k)} \dd\widetilde{W}^{(j)}(s)\Big).$$
Note that $\int_0^{\cdot} \frac{\gamma_j}{n(1-\beta_j)}\sum_{k\neq j} Z^{(k)} \dd W(s)$ is a BMO martingale under $\BP$,
so $\int_0^{\cdot} \frac{\gamma_j}{n(1-\beta_j)}\sum_{k\neq j} Z^{(k)} \dd\widetilde{W}^{(j)}(s)$ is a BMO martingale under $\widetilde{\BP}^{(j)}$ and $M^{(j)}(t)$ is a uniformly integrable martingale under $\widetilde{\BP}^{(j)}$. Therefore, we have
$$\widetilde{\BE}^{(j)}_t\Big[\frac{M^{(j)}(T)}{M^{(j)}(t)} \Big]= \frac{ \widetilde{\BE}^{(j)}_t\big[M^{(j)}(T)\big] } {M^{(j)}(t)}
=\frac{M^{(j)}(t)}{M^{(j)}(t)}=1, ~~\forall\;t\in[0,T].$$
Combining the boundedness of $L^{(j)}$ and $Y^{(j)}$, we obtain that there are positive constants $c_{1}^{(j)}$, $c_{2}^{(j)}$ such that
\begin{equation}\label{a1}
c_1^{(j)}\leq \widetilde{\BE}^{(j)}_t\Big[\exp\Big( \int_t^T \frac{h^{(j)}(\pi^{(-j,*)})}{1-\beta_j} \ds \Big)\Big]\leq c_2^{(j)},~~\forall\;t\in[0,T], ~~1\leq j\leq n.
\end{equation}
When $\gamma_j=0$, $h^{(j)}(\pi^{(-j,*)})\in L^{\infty}_{\mathcal{F}}(0,T;\BR)$, the above inequality also holds. From the definition of $h^{(j)}$, we have
$$|h^{(j)}(\pi^{(-j,*)})|\leq K^{(j)}\Big(1+\sum_{k\neq j} \big[Z^{(k)}\big]^2 \Big), ~~1\leq j\leq n,$$
where $K^{(j)}$ are positive constants. Because $Z^{(j)}\in L^{2,\text{BMO}}_{\mathcal{F}}(0,T;\BR)$ for all $1\leq j\leq n$, there are positive constants $c_{3}^{(j)}$ such that
\begin{equation}\label{a2}
\widetilde{\BE}^{(j)}_{\tau} \Big[ \int_{\tau}^{T} \Big( \frac{h^{(j)}(\pi^{(-j,*)})}{1-\beta_j}\Big)^{+} \ds \Big]\leq
\widetilde{\BE}^{(j)}_{\tau} \Big[ \int_{\tau}^{T} \Big( \frac{h^{(j)}(\pi^{(-j,*)})}{1-\beta_j}\Big)^{+} \ds \Big]\leq c_3^{(j)}
\quad
\end{equation}
holds for any stopping times $\tau\leq T$. From Lemma \ref{Linear BSDE 1}, BSDE \eqref{4103} admits a solution in
$L^{\infty}_{\mathcal{F}}(0,T; \BR_{\gg1})\times L^{2,\: \mathrm{BMO}}_{\mathcal{F}} (0, T; \BR)$.
So BSDE \eqref{4101} admits a solution in $L^{\infty}_{\mathcal{F}}(0,T; \BR)\times L^{2,\: \mathrm{BMO}}_{\mathcal{F}} (0, T; \BR)$.

Applying It\^{o}'s formula to
$\big[X^{(j,*)}\big]^{\beta_j} \big[\widehat{X}^{(-j,*)}\big]^{\gamma_j}e^{P^{(j,*)}}$,
where $X^{(j,*)}$, $\widehat{X}^{(-j,*)}$ and $(P^{(j,*)},\Lambda^{(j,*)})$ are solutions of \eqref{203}, \eqref{4100} and \eqref{4101} with the Nash equilibrium given in \eqref{4203}, we get

\begin{equation*}
\begin{aligned}
\dd \frac{ [X^{(j,*)}]^{\beta_j} [\widehat{X}^{(-j,*)}]^{\gamma_j}e^{P^{(j,*)}} }
{ [X^{(j,*)}]^{\beta_j} [\widehat{X}^{(-j,*)}]^{\gamma_j}e^{P^{(j,*)}} }
= \Big( \beta_j\sigma\pi^{(j,*)} +\frac{\gamma_j\sigma}{n}\sum_{k\neq j} \pi^{(k,*)} +\Lambda^{(j,*)} \Big) \dw(s),\quad 1\leq j\leq n.
\end{aligned}
\end{equation*}
Thus, for all $1\leq j\leq n$, we have
\begin{equation*}
\begin{aligned}
&\big[X^{(j,*)}(t)\big]^{\beta_j} \big[\widehat{X}^{(-j,*)}(t)\big]^{\gamma_j} e^{P^{(j,*)}(t)}\\
=\,&\big[x^{(j)}\big]^{\beta_j} \big[\hat{x}^{(-j)} \big]^{\gamma_j} e^{P^{(j)}(0)}
\mathcal{E}\Big(\int_0^{t} \Big(\beta_j\sigma\pi^{(j,*)} +\frac{\gamma_j\sigma}{n}\sum_{k\neq j} \pi^{(k,*)} +\Lambda^{(j,*)}\Big) \dw(s) \Big).
\end{aligned}
\end{equation*}
Because for all $1\leq j\leq n$, $\beta_j\sigma\pi^{(j,*)} +\frac{\gamma_j\sigma}{n}\sum_{k\neq j} \pi^{(k,*)} +\Lambda^{(j,*)} \in L^{2,\: \mathrm{BMO}}_{\mathcal{F}}(0,T;\BR)$, $\big[X^{(j,*)}(t)\big]^{\beta_j} \big[\widehat{X}^{(-j,*)}(t)\big]^{\gamma_j} e^{P^{(j,*)}(t)}$ is a uniformly integrable martingale on $[0,T]$. Hence, we can conclude that the Nash equilibrium given in \eqref{4203} is admissible. We finish the proof.

\end{proof}

\begin{remark}
The Nash equilibrium in $\seta_2$ for game \eqref{203}-\eqref{204} can also be given by \eqref{4206} and
$(\widehat{P}^{(j)},\widehat{\Lambda}^{(j)})_{1\leq j \leq n}$ is the solution of $n$-dimensional coupled BSDE \eqref{4208}.
Fortunately, the combination $(\widehat{P}^{(j)}+ \delta_j\gamma_j\bar{\lambda}\overline{P},
\widehat{\Lambda}^{(j)}+ \delta_j\gamma_j\bar{\lambda}\overline{\Lambda})_{1\leq j\leq n}$ is the solution of $n$-dimensional decoupled BSDE,
which can also be regarded as solutions of $n$ $1$-dimensional BSDEs \eqref{4202}. We use \eqref{4203} in Theorem \ref{NE 2} because the solvability of $1$-dimensional BSDEs \eqref{4202} are easier to prove.
\end{remark}

\begin{remark}
If the interest rate, the appreciation rate and volatility rate of the risky security are all constants and $r=0$, $\mu>0$, $\sigma>0$, then the unique solution of BSDE \eqref{4202} is
$$(Y^{(j)},Z^{(j)})=\Big(\Big[\frac{1}{2}C_2^{(j)}\rho^2+C_3^{(j)}r\Big] (T-t),0\Big),~~ 1\leq j\leq n.$$
Therefore, when the market parameters are all constants, a Nash equilibrium in $\seta_2$ for game \eqref{203}-\eqref{204} is
\begin{equation*}
\begin{aligned}
\pi^{(j,*)}(t) = C_1^{(j)}\frac{ \rho }{\sigma},~~ 1\leq j\leq n.
\end{aligned}
\end{equation*}
This Nash equilibrium is the same as the one given in Corollary 3.2 in \cite{Zariphopoulou 1}. But we still cannot obtain the uniqueness of the Nash equilibrium in $\seta_2$ even the market parameters are all constants, because we cannot prove the solvability of BSDE \eqref{4101} with a strategy $(\pi^{(1)},\pi^{(2)}, \cdots,\pi^{(n)})\in\big(L^{2,\text{BMO}}_{\mathcal{F}}(0,T;\BR)\big)^n$. In fact, we can only get that this Nash equilibrium is unique in constant strategies, just like Corollary 3.2 in \cite{Zariphopoulou 1}.
\end{remark}

\section{Concluding remarks}\label{Conclusion}

In this paper, we studied a competitive optimal portfolio selection problem in a non-Markovian financial market for both CARA and CRRA utilities.
We succeeded in obtaining the Nash equilibria for these two types of utility via quadratic BSDE theory.
For the CARA utility case, we used the ABR method to get the Nash equilibrium in $\seta_1$, which has three cases depending on market and competition parameters.
For the CRRA utility case, we used the BR method to obtain a Nash equilibrium in $\seta_2$, which is related to a new kind of quadratic BSDEs with unbounded coefficients.
The solvability of this new kind of quadratic BSDEs with unbounded coefficients is established, which is interesting in its own right from the BSDE theory point of view.
%With the help of a decoupling technology, we can even give the limiting strategy for the CARA and CRRA utilities when the number of agent tends to be infinite; see our below argument.
We defined the admissible strategy set $\seta_2$ as the largest set that makes the game problem meaningful and solvable, but at the cost that we loss the uniqueness.
Is it possible to find a subset $\seta'_2$ of $\seta_2$ such that the Nash equilibrium in \eqref{4203} is unique in $\seta'_2$?
In order to answer this question, one has to resolve the uniqueness issue of the new kind of quadratic BSDE \eqref{4101} with unbounded coefficients, which is beyond our ability.

The games studied in this paper are of finite population.
Benefiting from a decoupling technique, the Nash equilibria we obtained for the CARA and CRRA utility cases are associated with two 1-dimensional BSDEs and $n$ 1-dimensional BSDEs, respectively. Under some mild assumptions, we can give the limiting strategy for the two cases when the number of agent tends to be infinite.

To see this, let the risk parameters and competition parameters of the whole population have two distributions.
Let $\bm{\delta}$ and $\bm{\theta}$ denote random variables with these two distributions.
When $\BE[\bm{\theta}]<1$, the limiting strategy for the CARA utility case is
\begin{equation*}
\begin{aligned}
\lim \limits_{n\rightarrow \infty }\pi^{(j,*)}(t) = -\frac{ \eta(t) }{\sigma(t)\psi(t)}X^{(j)}(t)
+\Big( \delta_j+ \frac{\BE[\bm{\delta}]\theta_j}{1-\BE[\bm{\theta}]} \Big)
\frac{ \rho(t)+\Delta(t)+\frac{\eta(t)}{\psi(t)} }{\sigma(t)\psi(t)},
\end{aligned}
\end{equation*}
and the limiting strategy for the CRRA utility case is
\begin{equation*}
\begin{aligned}
\lim \limits_{n\rightarrow \infty }\pi^{(j,*)}(t) = \frac{ K_1^{(j)}\rho(t)+Z^{(j)}(t) }{\sigma(t)},
\end{aligned}
\end{equation*}
where $(\psi,\eta)$ and $(\varphi,\Delta)$ are unique solutions of BSDE \eqref{3201} and \eqref{3202},
and $(Y^{(j)},Z^{(j)})$ is a solution of the following quadratic BSDE:
\begin{equation*}
\left\{
\begin{aligned}
\dd Y^{(j)}(t)&=-\Big[\frac{1}{2}\big[Z^{(j)}(t)\big]^2+(K_1^{(j)}-1)\rho Z^{(j)}(t)+\frac{1}{2}K_2^{(j)}\rho^2+K_3^{(j)}r \Big]\dt +Z^{(j)}(t)\dw(t),\\
Y^{(j)}(T)&=0
\end{aligned}
\right.
\end{equation*}
with
$$K_1^{(j)}\triangleq \frac{ \theta_j(1-\delta_j)\BE[\bm{\delta}] } { \BE[\bm{\theta}(1-\bm{\delta})] } +\delta_j,~~K_2^{(j)}\triangleq K_1^{(j)}(K_1^{(j)}-1),~~
K_3^{(j)}\triangleq K_1^{(j)}-1.$$

One of possible interesting extensions of this paper is to construct a mean field game formulation such that its mean field equilibria are consistent with the above limit strategies as in \cite{Zariphopoulou 1}.

%\appendix

\begin{appendix}

\section{The solvability of a new linear BSDE with unbounded coefficients}

\renewcommand*{\thetheorem}{\mbox{\textrm A.\arabic{theorem}}}

\begin{lemma} \label{Linear BSDE 1}
Given any $\BR$-valued $\{\mathcal{F}_t \}_{t\geq0}$-adapted process $a(\cdot)$ and
$b\in L^{2,\: \mathrm{BMO}}_{\mathcal{F}} (0, T; \BR)$, let
$$J(t)\triangleq\exp\bigg( \int_0^t a(s)\ds \bigg),\quad
N(t)\triangleq \mathcal{E}\bigg( \int_0^t b(s) \dw (s) \bigg).$$
Note that $N(t)$ is a uniformly integrable martingale, thus $ \overline{W}(t) \triangleq W(t) -\int_0^t b(s)\ds$ is a Brownian
motion under the probability measure $\overline{\BP}$ defined by
\begin{equation*}
%\label{B2}
\frac{\dd \overline{\BP}}{\dd \BP}\bigg|_{\mathcal{F}_T}=N(T).
\end{equation*}
Let $\overline{ \BE}_{t}$ be the conditional expectation w.r.t. the probability measure $\overline{\BP}$.
Let $\xi$ be an $\mathcal{F}_T$-measurable random variable such that
\begin{equation*}
%\label{appass0}
0<\underline{\xi}:=\essinf\xi \leq \overline{\xi}:=\esssup\xi<\infty.
\end{equation*}
Assume there are positive constants $c_{1},c_{2}$ and $c_{3}$ such that
\begin{equation}\label{appass1}
%C_{1}\leq \xi\leq C_{2},~~~
c_{1}\leq \overline{ \BE}_t\bigg[\exp\bigg( \int_t^T a(s)\ds \bigg)\bigg]\leq c_{2},~~\forall\;t\in[0,T],
\end{equation}
and
\begin{equation}\label{appass2}
\overline{\BE}_{\tau} \Big[ \int_{\tau}^{T} a(s)^{+} \ds \Big]\leq c_{3}
\end{equation}
holds for any stopping time $\tau\leq T$.
%
%holds for all $t\in[0,T]$,
%
%If there exists a positive constant $\epsilon\in(0,\frac{1}{2})$ such that for all $\{\mathcal{F}_t\}_{t\geq0}$-stopping times $\tau\leq T$,
%$$\overline{\BE} \Big[ \int_{\tau}^T |a(s)| \ds \;\Big| \;\mathcal{F}_{\tau} \Big]\leq \frac{1}{2}-\epsilon,$$
%where $\overline{ \BE}$ is the expectation w.r.t. the probability measure $\overline{\BP}$ defined by \eqref{B2}.
%which guarantees $\sqrt{|a(\cdot)|}\in L^{2,\: \mathrm{BMO}}_{\mathcal{F}} (0, T; \BR_{\gg1})$ and
%$$ \Big\|\int_0^{\cdot} \sqrt{|a(s)|} \dd \overline{W}(s)\Big\|_{\mathrm{BMO}_2}
% =\sup\limits_{\tau\leq T}\Big( \operatorname*{ess\,sup}
% \overline{\BE}\Big[ \int_{\tau}^T |a(s)| \,\ds\;\Big| \;\mathcal{F}_{\tau} \Big] \Big)^{\frac{1}{2}}
%\leq \sqrt{\frac{1}{2}-\epsilon} <\frac{1}{\sqrt{2}},$$
Then the following 1-dimensional linear BSDE
\begin{equation}\label{B3}
\left\{
\begin{aligned}
\dd Y(t)&=-\big[a(t)Y(t)+b(t)Z(t)\big]\dt +Z(t)\dw(t),\\
Y(T)&=\xi,
\end{aligned}
\right.
\end{equation}
admits a solution $(Y,Z)\in L^{\infty}_{\mathcal{F}}(0,T; \BR_{\gg1})\times L^{2,\: \mathrm{BMO}}_{\mathcal{F}} (0, T; \BR)$. Moreover, $c_{1}\underline{\xi} \leq Y \leq c_{2} \overline{\xi}$.
% and
%$ \widetilde{C}e^{-\frac{1}{2}}\leq Y\leq \widetilde{C} \big/ \big(1-\frac{1}{\sqrt{2}}\big)$.
%
If we further assume
\begin{equation}\label{appass3}
\overline{ \BE}\bigg[\exp\bigg( 2\int_0^T a(s)\ds \bigg)\bigg]<\infty,
\end{equation}
then \eqref{B3} admits a unique solution
$(Y,Z)\in L^{\infty}_{\mathcal{F}}(0,T; \BR_{\gg1})\times L^{2,\: \mathrm{BMO}}_{\mathcal{F}} (0, T; \BR)$ such that
\begin{equation}\label{appass4}
\overline{\BE}\bigg[ \int_0^{T} \big|J(s)Z(s)\big|^2 \ds \bigg] < \infty.
\end{equation}
%\red{ the uniqueness may not be needed in the result.} \blue{Yes.}
\end{lemma}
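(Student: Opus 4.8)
The plan is to reduce the linear BSDE \eqref{B3} to a purely "terminal-value" object by absorbing both the zeroth-order term $a(t)Y(t)$ and the driver term $b(t)Z(t)$ via a measure change and a multiplicative factor. First I would pass to the probability measure $\overline{\BP}$ under which $\overline{W}(t)=W(t)-\int_0^t b(s)\,\dd s$ is a Brownian motion; rewriting \eqref{B3} in terms of $\overline{W}$ kills the $b(t)Z(t)$ term and leaves $\dd Y(t)=-a(t)Y(t)\,\dd t+Z(t)\,\dd \overline{W}(t)$, $Y(T)=\xi$. Next I would introduce the candidate
$$
Y(t)\triangleq \frac{1}{J(t)}\,\overline{\BE}_t\big[ J(T)\,\xi \big]
=\overline{\BE}_t\Big[\exp\Big(\int_t^T a(s)\,\dd s\Big)\xi\Big],
$$
which is the natural guess since $J(t)Y(t)$ should be a $\overline{\BP}$-martingale. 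From assumption \eqref{appass1} together with $0<\underline{\xi}\le\xi\le\overline\xi<\infty$ we immediately read off the two-sided bound $c_1\underline{\xi}\le Y(t)\le c_2\overline\xi$, so $Y\in L^{\infty}_{\mathcal F}(0,T;\BR_{\gg1})$. The process $M(t)\triangleq\overline{\BE}_t[J(T)\xi]$ is a uniformly integrable $\overline{\BP}$-martingale (bounded by $c_2\overline\xi J(T)$, which is integrable under \eqref{appass1} with $t=0$, or one can argue via \eqref{appass3} when it is assumed), so by the martingale representation theorem under $\overline{\BP}$ there is an adapted $\widetilde Z$ with $\dd M(t)=\widetilde Z(t)\,\dd\overline W(t)$; setting $Z(t)\triangleq \widetilde Z(t)/J(t)$ and applying Itô's product rule to $Y(t)=M(t)/J(t)$ recovers exactly the $\overline{\BP}$-form of \eqref{B3}, hence \eqref{B3} itself back under $\BP$.

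The remaining work, and the main obstacle, is to show $Z\in L^{2,\mathrm{BMO}}_{\mathcal F}(0,T;\BR)$, i.e.\ that $\int_0^\cdot Z(s)\,\dd W(s)$ is a BMO martingale under $\BP$; equivalently (by the stability of the BMO property under the measure change, since $b\in L^{2,\mathrm{BMO}}$) that $\int_0^\cdot Z(s)\,\dd\overline W(s)$ is BMO under $\overline\BP$. The standard device is to apply Itô's formula to $Y(t)^2$ (or to $\log Y(t)$, exploiting $Y\ge c_1\underline\xi>0$) between a stopping time $\tau$ and $T$ and take $\overline\BE_\tau[\cdot]$: this yields
$$
\overline\BE_\tau\Big[\int_\tau^T |Z(s)|^2\,\dd s\Big]
= \overline\BE_\tau\big[Y(T)^2\big]-Y(\tau)^2 - 2\,\overline\BE_\tau\Big[\int_\tau^T a(s)Y(s)^2\,\dd s\Big].
$$
The first two terms are bounded by $(c_2\overline\xi)^2$. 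For the last term one splits $a=a^+-a^-$; the $a^-$ part has a favourable sign, and the $a^+$ part is controlled by $2(c_2\overline\xi)^2\,\overline\BE_\tau[\int_\tau^T a(s)^+\,\dd s]\le 2(c_2\overline\xi)^2 c_3$ using precisely hypothesis \eqref{appass2}. This gives a uniform-in-$\tau$ bound on $\overline\BE_\tau[\int_\tau^T|Z|^2\,\dd s]$, i.e.\ the BMO property. I expect the care here to lie in justifying the localization (the stochastic integral is only a local martingale a priori) and in handling the sign bookkeeping for $a$; the two technical assumptions \eqref{appass1} and \eqref{appass2} are tailored exactly so that this estimate closes.

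For the uniqueness-and-\eqref{appass4} part, I would argue under the extra assumption \eqref{appass3}. Given any two solutions $(Y,Z),(Y',Z')$ in the stated space, their difference $(\delta Y,\delta Z)$ solves the same linear BSDE with zero terminal condition; multiplying by the integrating factor $J(t)$ and changing to $\overline\BP$ shows $J(t)\,\delta Y(t)$ is a local $\overline\BP$-martingale with $J(T)\delta Y(T)=0$. Using $\delta Y\in L^\infty$ and \eqref{appass3} one gets enough integrability ($\sup_t J(t)|\delta Y(t)|$ has a square-integrable dominating function by Doob's inequality applied to the $\overline\BP$-martingale $M$, whose terminal value is in $L^2(\overline\BP)$ by \eqref{appass3}) to upgrade it to a true uniformly integrable $\overline\BP$-martingale, forcing $\delta Y\equiv 0$ and then $\delta Z\equiv 0$. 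Finally, for the representative solution constructed above, $J(t)Z(t)=\widetilde Z(t)$ is the integrand in the representation of the genuine $\overline\BP$-martingale $M$, whose terminal value $J(T)\xi$ lies in $L^2(\overline\BP)$ thanks to \eqref{appass3} and the boundedness of $\xi$; the Itô isometry under $\overline\BP$ then gives $\overline\BE[\int_0^T|J(s)Z(s)|^2\,\dd s]=\overline\BE[(J(T)\xi)^2]-M(0)^2<\infty$, which is \eqref{appass4}. I would double-check that the solution produced in the first part is the one satisfying \eqref{appass4}, so that "unique solution such that \eqref{appass4}" is not vacuous.
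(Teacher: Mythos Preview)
Your proposal is correct and follows essentially the same route as the paper: define $Y(t)=\overline{\BE}_t\big[\xi\exp(\int_t^T a\,\ds)\big]$, read off the two-sided bound from \eqref{appass1}, obtain $Z$ by martingale representation under $\overline{\BP}$, and derive the BMO estimate by applying It\^o to $Y^2$ together with \eqref{appass2}. One typo to fix: in your displayed identity the sign in front of $2\,\overline\BE_\tau[\int_\tau^T aY^2\,\ds]$ should be $+$, not $-$; your subsequent sentence about the $a^-$ part being favourable and the $a^+$ part controlled by \eqref{appass2} is already consistent with the correct sign, so this is purely a slip in the display.

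For the uniqueness part the paper proceeds slightly differently: it assumes \eqref{appass4} for \emph{both} solutions and uses it directly to make $\int_0^{\cdot} J\,\delta Z\,\dd\overline W$ a genuine $L^2(\overline\BP)$-martingale, whence $J(t)\,\delta Y(t)=-\overline\BE_t[\int_t^T J\,\delta Z\,\dd\overline W]=0$. Your Doob-based route is a legitimate alternative: from the lower bound $Y(t)=M(t)/J(t)\ge c_1\underline\xi$ one gets $J(t)\le M(t)/(c_1\underline\xi)$, and since $M(T)=J(T)\xi\in L^2(\overline\BP)$ by \eqref{appass3}, Doob's inequality controls $\sup_t M(t)$ in $L^2(\overline\BP)$; this dominates $\sup_t J(t)|\delta Y(t)|$ and upgrades the local martingale $J\,\delta Y$ to a true one. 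This buys you a slightly stronger conclusion than the paper states (uniqueness in $L^\infty_{\mathcal F}(0,T;\BR_{\gg1})\times L^{2,\mathrm{BMO}}_{\mathcal F}(0,T;\BR)$ under \eqref{appass3}, without requiring \eqref{appass4} of the competing solution), at the cost of a small extra step.
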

\begin{proof}
We set
$$
Y (t) = \big[J(t)\big]^{-1}\overline{ \BE}_t\big[\xi J(T) \big]= \overline{ \BE}_t\bigg[\xi\exp\bigg( \int_t^T a(s)\ds \bigg)\bigg].$$
Then by \eqref{appass1}, we have $Y (T) = \xi $ and
$c_{1}\underline{\xi} \leq Y (t) \leq c_{2} \overline{\xi}$,
% Using Jessen inequality and Lemma \ref{Inequality 1}, for any $t\in [0,T]$, we get
% \begin{equation*}
%\begin{aligned}
% \xi e^{-\frac{1}{2}} \leq \xi e^{-\overline{ \BE}_t\big[\int_t^T |a(s)|\ds\big] }
% \leq \xi \overline{ \BE}_t\big[e^{-\int_t^T |a(s)|\ds}\big] \leq
% Y (t)
% %= \xi \overline{ \BE}_t\big[e^{\int_t^T a(s)\ds}\big]
% \leq \xi \overline{ \BE}_t\big[e^{\int_t^T |a(s)|\ds}\big]
%\leq \xi \Big/ \Big(1-\frac{1}{\sqrt{2}}\Big).
%\end{aligned}
%\end{equation*}
so $Y\in L^{\infty}_{\mathcal{F}}(0,T; \BR_{\gg1})$. \footnote{The requirement \eqref{appass1} is equivalent to $Y\in L^{\infty}_{\mathcal{F}}(0,T; \BR_{\gg1})$.}
%Similarly, we have
%$$\overline{ \BE}\big[ J(T)^2 \big]
%\leq \overline{ \BE} \big[e^{\int_0^T 2|a(s)|\ds}\big]\leq \frac{1} { 1-\sqrt{2}\overline{a} } .$$
% Thus,
% $J(t) Y (t) =\overline{ \BE}_t\big[\xi J(T) \big]$
% is a square integrable martingale under $\overline{\BP}$. By the martingale presentation theorem, there exists $\widehat{Z}\in L^{2}_{\mathcal{F}}(0,T;\BR)$

Because $J(t) Y (t) =\overline{ \BE}_t\big[Y(T) J(T) \big]$ is a martingale under $\overline{\BP}$, so there exists a unique
$\widehat{Z}\in L^{2,\text{loc}}_{\mathcal{F}}(0,T;\BR)$ under $\overline{\BP}$ such that
$$J(t)Y (t)= J(0)Y (0) +\int_0^t \widehat{Z}(s)\dd \overline{W} (s).$$
As a consequence, we get $\dd \,(J(t)Y (t)) = \widehat{Z}(t)\dd \overline{W} (t)$.
Let $Z(t) = {[J(t)]}^{-1}\widehat{Z}(t)$.
By It$\hat{\text{o}}$'s lemma,
\begin{equation*}
\begin{aligned}
\dd Y (t) = \dd \big[ J(t)^{-1} \cdot \big(J(t)Y (t)\big)\big]
=-\big[a(t)Y (t)+b(t)Z(t)\big]\dt +Z(t) \dw (t).
\end{aligned}
\end{equation*}
Thus, $(Y, Z)$ satisfies \eqref{B3}.

As ${[J(\cdot)]}^{-1}$ is a continuous process, it is almost surely bounded on $[0,T]$.
Then the stochastic integral $\int_0^{\cdot} Y(t)Z(t) \dd \overline{W}(t)$ is a local martingale.
%Let $\tau_k$ be a localizing sequence of stopping times for it.
%Applying It\^{o}'s formula to ${Y}^2$,
%\begin{align*}
%\overline{ \BE} \Big[ \int_{0}^{T\wedge \tau_k} {|Z(s)|}^2 \ds \Big]
%&= \overline{ \BE} \big[ Y(T\wedge \tau_k)^2 \big]- {Y(0)}^2
% + \overline{\BE} \Big[ \int_{0}^{T\wedge \tau_k} 2a(s)Y(s)^2 \ds \Big]\\
%&\leq \xi ^{2}C_{2}^{2}+2\xi C_{2}\overline{\BE} \Big[ \int_{0}^{T} a(s)^{+} \ds \Big].
%\end{align*}
%Sending $k\to\infty$, we obtain from the monotone and dominated convergence theorems,
%\begin{align*}
%\overline{ \BE} \Big[ \int_{0}^{T} {|Z(s)|}^2 \ds \Big]
%&\leq \xi ^{2}C_{2}^{2} +2\xi ^{2}C_{2}^{2}\overline{\BE} \Big[ \int_{0}^{T} a(s)^{+} \ds \Big],
%\end{align*}
%so $\widehat{Z}\in L^{2}_{\mathcal{F}}(0,T;\BR)$.
%
For any given stopping time $\tau\leq T$, let
\[\tau_{k}=\inf\bigg\{t\in[\tau,T]: \int_{\tau}^{t} |Y(s)Z(s)|^{2}\ds\geq k\bigg\}\wedge T,\]
with the conventioin $\inf\emptyset\triangleq +\infty$. Then, $\{\tau_k\}_{k\geq 1}$ is an increasing sequence of stopping times satisfying
$\tau_k\rightarrow T$ as $k\rightarrow \infty$.
%such that, for all $k$ and any stopping time $\tau\leq (T\wedge \tau_k)$,
%$$\textcolor{red}{\overline{ \BE}_{\tau} \Big[\int_{\tau}^{T\wedge \tau_k} Y(t)Z(t) \dd \overline{W}(t) \Big]=0.}$$
%\footnote{Can this equation hold?}
Applying It\^{o}'s formula to ${Y}^2$, we get
\begin{align*}
\overline{ \BE}_{\tau} \Big[ \int_{\tau}^{ \tau_k} {|Z(s)|}^2 \ds \Big]
&= \overline{ \BE}_{\tau} \big[ Y( \tau_k)^2 \big]- {Y(\tau)}^2
+ \overline{\BE}_{\tau} \Big[ \int_{\tau}^{ \tau_k} 2a(s)Y(s)^2 \ds \Big]\\
%\leq 2\Big[\xi \Big/ \Big(1-\frac{1}{\sqrt{2}}\Big)\Big]^2.\\
&\leq c_{2}^{2}\overline{\xi}^{2}+2c_{2}^{2}\overline{\xi}^{2}
\overline{\BE}_{\tau} \Big[ \int_{\tau}^{ \tau_k} a(s)^{+} \ds \Big]\\
&\leq c_{2}^{2}\overline{\xi}^{2}+2c_{2}^{2}c_{3}\overline{\xi}^{2}.
\end{align*}
Sending $k\to\infty$, using monotone convergence theorem, we obtain
\begin{align*}
\overline{ \BE}_{\tau} \Big[ \int_{\tau}^{T} {|Z(s)|}^2 \ds \Big]
&\leq c_{2}^{2}\overline{\xi}^{2}+2c_{2}^{2}c_{3}\overline{\xi}^{2},
\end{align*}
which yields that $\int_0^{\cdot} Z(s) \dd \overline{W} (s)$ is a BMO martingale under $\overline{\BP}$.
Consequently, $\int_0^{\cdot} Z(s) \dw (s)$ is a BMO martingale under $\BP$.
This shows that $(Y, Z)$ is a solution of \eqref{B3} in $L^{\infty}_{\mathcal{F}}(0,T; \BR_{\gg1})\times L^{2,\: \mathrm{BMO}}_{\mathcal{F}} (0, T; \BR)$.

Now we further assume the condition \eqref{appass3} holds, then $J(t) Y (t)$ defined in above is a square integrable martingale, so that the unique process $\widehat{Z}\in L^{2}_{\mathcal{F}}(0,T;\BR)$ under $\overline{\BP}$, that is \eqref{appass4} holds.

To prove the uniqueness, suppose $(Y, Z)$ and $(\widetilde{Y}, \widetilde{Z})$ are both solutions of \eqref{B3} in $L^{\infty}_{\mathcal{F}}(0,T; \BR_{\gg1})\times L^{2,\: \mathrm{BMO}}_{\mathcal{F}} (0, T; \BR)$ that satisfy \eqref{appass4}. Set
$$ \overline{Y}\triangleq Y-\widetilde{Y},\quad \overline{Z}\triangleq Z-\widetilde{Z}.$$
Then $(\overline{Y},\overline{Z})$ satisfies the following BSDE:
\begin{equation*}
\left\{
\begin{aligned}
\dd \overline{Y}(t)&=-\big[a(t)\overline{Y}(t)+b(t)\overline{Z}(t)\big]\dt +\overline{Z}(t)\dw(t),\\
\overline{Y}(T)&=0.
\end{aligned}
\right.
\end{equation*}
By It\^{o}'s lemma, it follows
$$J(t)\overline{Y}(t)= -\int_t^T J(s)\overline{Z}(s)\dd \overline{W} (s).$$
Note the condition \eqref{appass4} implies $\int_0^{\cdot} J(s)\overline{Z}(s)\dd \overline{W} (s)$ is a square integrable martingale under $\overline{\BP}$, so
$$J(t)\overline{Y}(t)= -\overline{\BE}_t \Big[\int_t^T J(s)\overline{Z}(s)\dd \overline{W} (s)\Big]=0,\quad \forall \,t\in[0,T].$$
%
%Note $\overline{Z}$ is also a BMO under $\overline{\BP}$.
%, so for any $p>1$,
%$$\overline{\BE}\bigg[ \Big(\int_0^{T} \big|\overline{Z}(s)\big|^2 \ds\Big)^{p}\bigg]
%<\infty.$$
%If for some $q>0$ such that
%$$\overline{\BE}\bigg[ \Big(\int_0^{T} J(s)^2 \ds\Big)^{q}\bigg] <\infty,$$
%then by H\^{o}lder's inequality gives
%\begin{align*}
%\overline{\BE}\Big[ \int_0^{T} \big|J(s)\overline{Z}(s)\big|^2 \ds \Big]
%&\leq \overline{\BE}\bigg[ \Big(\int_0^{T} J(s)^{2q} \ds\Big)^{1/q}\Big(\int_0^{T} \big|\overline{Z}(s)\big|^{2p} \ds\Big)^{1/p}\bigg]\\
%&\leq
%\end{align*}
%
%
%The stochastic integral in above equation is a local martingale. By dominated convergence theorem,
%for any $t\in[0,T]$, we get $\overline{Y}(t)=0$ by taking conditional expectation $\overline{\BE}_t$ on both sides.
%
Since $J>0$, we get $\overline{Y}=0$. And consequently,
$$\overline{\BE}\Big[ \int_0^{T} \big|J(s)\overline{Z}(s)\big|^2 \ds \Big]
= \overline{\BE}\Big[ \Big( \int_0^{T}J(s)\overline{Z}(s) \dd \overline{W} (s) \Big)^2\Big]
=\big[J(0)\overline{Y}(0) \big]^2=0.$$
It follows $\overline{Z} = 0$ as $J>0$. This completes the proof.
\end{proof}%\begin{lemma} \label{Linear BSDE 2}
%Let
%$f$ be a $\BR_{\gg1}$-valued $\{\mathcal{F}_t\}_{t\geq0}$-adapted process and
%$\sqrt{f}\in L^{2,\: \mathrm{BMO}}_{\mathcal{F}} (0, T; \BR_{\gg1})$.
%For any $b\in L^{2,\: \mathrm{BMO}}_{\mathcal{F}} (0, T; \BR)$, $\widetilde{C} \in \BR$ and $\widetilde{C} \geq 0$,
%the following 1-dimensional linear BSDE
%\begin{equation}\label{B5}
%\left\{
%\begin{aligned}
%\dd Y(t)&=-\big[b(t)Z(t)+f(t)\big]\dt +Z(t)\dw(t),\\
%Y(T)&=\widetilde{C},
%\end{aligned}
%\right.
%\end{equation}
%admits a unique solution $(Y,Z)\in L^{\infty}_{\mathcal{F}^W}(0,T; \BR)\times L^{2,\: \mathrm{BMO}}_{\mathcal{F}} (0, T; \BR)$ and
%$$ \widetilde{C}\leq Y(t)= \overline{ \BE}_t \Big[ \widetilde{C}+\int_t^T f(s)\ds \Big]
% \leq \widetilde{C}+ \Big\|\int_0^{\cdot} \sqrt{f(s)} \dd \overline{W}(s)\Big\|^2_{\mathrm{BMO}_2} .$$
%\end{lemma}
%\begin{proof}
% The proof of this lemma is similar to Lemma \ref{Linear BSDE 1} and will not be repeated here.
%\end{proof}

\end{appendix}

%\noindent\textbf{References }


\begin{thebibliography}{9}

\bibitem {Merton}
R. C. Merton, {\it Optimum consumption and portfolio rules in a continuous-time model},
%Journal of Economic Theory,
J. Econom. Theory, \textbf{3} (1971), pp.~373--413.


\bibitem {Zariphopoulou 1}
D. Lacker and T. Zariphopoulou, {\it Mean field and n-agent games for optimal investment under relative performance criteria},
Math. Finance, \textbf{29} no.4 (2019), pp.~1003--1038.

\bibitem {Lacker2}
D. Lacker and A. Soret, {\it Many-player games of optimal consumption and investment under relative performance criteria},
Math. Financial Econ., \textbf{14} (2020), pp.~263--281.

\bibitem {MF}
G. E. Espinosa and N. Touzi, {\it Optimal investment under relative performance concerns},
Math. Finance, \textbf{25} no.2 (2015), pp.~221--257.

\bibitem {Huang}
M. Y. Huang and S. L. Nguyen, {\it Mean field games for stochastic growth with relative utility},
Appl. Math. Optim., \textbf{74} no.3 (2016), pp.~643--668.

\bibitem {AF}
M. Whitmeyer. {\it Relative performance concerns among investment managers}, Annals of Finance,
\textbf{15} no.2 (2019), pp.~205--231.



\bibitem {Zariphopoulou 2}
R. M. Hu and T. Zariphopoulou, {\it $N$-player and mean-field games in It\^{o}-diffusion markets with competitive or homophilous interaction,}
Stochastic Analysis, Filtering, and Stochastic Optimization, Springer, (2022), pp.~209--237.

\bibitem {FU Guangxing}
G. X. Fu, X. Z. Su and C. Zhou,
{\it Mean field exponential utility game: A probabilistic approach}, (2020), arXiv:2006.07684v2.


\bibitem {Zhou Chao}
C. Deng, X. Z. Su and C. Zhou, {\it Relative wealth concerns with partial information and heterogeneous priors},
SlAM J. Financial Math., \textbf{15} no.2 (2024), pp.~360--398.


\bibitem {Panpan}
P. P. Zhang and P. Y. Huang, {\it Optimal portfolio with relative performance and partial information: A mean-field game approach},
Asian J. Control, \textbf{26} no.2 (2024), pp.~703--716.
\bibitem {JEDC}
H. Kraft, A. Meyer-Wehmann and F. T. Seifried,
{\it Dynamic asset allocation with relative wealth concerns in incomplete markets}, J. Econ. Dyn. Control, (2020), 113:103857.

\bibitem {Hu AAP2}
Y. Hu, P. Imkeller and M. M\"{u}ller, {\it Utility maximization in incomplete markets}, Ann. Appl. Probab.,
\textbf{15} no.3 (2005), pp.~1691--1712.

\bibitem {Xu arXiv2}
Y. Hu, X. M. Shi and Z. Q. Xu, {\it Optimal consumption-investment with coupled constraints
on consumption and investment strategies in a regime switching market with random coefficients}, (2022), arXiv:2211.05291.


\bibitem {Peng1}
E. Pardoux and S. G. Peng, {\it Adapted solution of a backward stochastic differential equation}, Systems Control Lett.,
\textbf{14} no.1 (1990), pp.~55--61.

\bibitem {Peng2}
N. El Karoui, S. G. Peng and M. C. Quenez, {\it Backward stochastic differential equations in finance},
Math. Finance, \textbf{7} (1997), pp.~1--71.

\bibitem {Fan Hu Tang}
S. J. Fan, Y. Hu and S. J. Tang, {\it Multi-dimensional backward stochastic differential equations of diagonally quadratic generators: The general result}, J. Differ. Equ., \textbf{368} no.25 (2023), pp.~105--140.

\bibitem {Xu arXiv1}
Y. Hu, X. M. Shi and Z. Q. Xu, {\it Non-homogeneous stochastic LQ control with regime switching and random coefficients}, Math. Control Relat. F.,
\textbf{14} no.2 (2024), pp.~671--694.

%\bibitem {Ma Yong}
% J. Ma and J. M. Yong,
%{\it Forward-backward stochastic differential equations and their applications}, Springer, 1999.


% \bibitem {Bjork}
% Bjork, T.: Arbitrage Theory in Continuous Time. Oxford University Press
%(2009)



\bibitem {Kazamaki}
N. Kazamaki, {\it Continuous exponential martingales and BMO}, Springer, 2016.


%\bibitem {L. I. Gal'Chuk}
%L. I. Gal'Chuk, {\it Existence and uniqueness of a solution for stochastic equations with respect to semimartingales},
%Theory Probab. Appl., \textbf{23} no.4 (1979), pp.~751--763.


\end{thebibliography}
\end{document}